\newcommand{\R}{{\mathbb R}}
\newcommand{\re}{{\mathbb R}}
\newcommand{\ren}{{\mathbb R}^N}
\newcommand{\be}[1]{\begin{equation}\label{#1}}
\newcommand{\ee}{\end{equation}}
\newcommand{\prf}{\par\smallskip\noindent{\sl Proof. \/}}
\newcommand{\finprf}{\unskip\null\hfill$\;\square$\vskip 0.3cm}
\newenvironment{proof}{\prf}{\finprf}
\newtheorem{theorem}{Theorem}[section]
\newtheorem{lemma}{Lemma}[section]
\newtheorem{proposition}[theorem]{Proposition}
\newtheorem{definition}{Definition}[section]
\newcommand{\ve}{\varepsilon}
\numberwithin{equation}{section}
 \newcommand{\nc}{\normalcolor}
\def\qed{\,\unskip\kern 6pt \penalty 500
\raise -2pt\hbox{\vrule \vbox to8pt{\hrule width 6pt
\vfill\hrule}\vrule}\par}
\definecolor{darkblue}{rgb}{0.05, .05, .65}
\definecolor{darkgreen}{rgb}{0.1, .65, .1}
\definecolor{darkred}{rgb}{0.8,0,0}
\begin{document}
\title{\textbf{ Symmetrization for Linear and Nonlinear \\ Fractional  Parabolic Equations \\ of Porous Medium Type }\\[7mm]}

\author{\Large  Juan Luis V\'azquez \footnote{Departamento de Matem\'aticas, Universidad Aut\'onoma de Madrid, 28049 Madrid, Spain. \newline
E-mail:
{\tt juanluis.vazquez@uam.es}} \ and \ Bruno Volzone \footnote{Dipartimento per le Tecnologie, Facolt\`a di Ingegneria, Università degli Studi di
Napoli "Parthenope", 80143  Italia. \    E-mail: {\tt bruno.volzone@uniparthenope.it}} }

\date{} 

\maketitle

\begin{abstract}
We establish symmetrization results for the solutions of the linear  fractional diffusion equation $\partial_t u +(-\Delta)^{\sigma/2}u=f$ and its
elliptic counterpart
$h v +(-\Delta)^{\sigma/2}v=f$, $h>0$, using the concept of comparison of concentrations. The results extend to the nonlinear version, $\partial_t u
+(-\Delta)^{\sigma/2}A(u)=f$,
but only when $A:\re_+\to\re_+$ is a concave function. In the elliptic case, complete symmetrization results are  proved for $\,B(v)
+(-\Delta)^{\sigma/2}v=f$ \ when $B(v)$ is a convex nonnegative function for $v>0$ with $B(0)=0$, and partial results when $B$ is concave.
Remarkable counterexamples are constructed for the parabolic equation when $A$ is convex, resp. for the elliptic equation when $B$ is concave. Such counterexamples do not exist in the standard diffusion case $\sigma=2$.
\end{abstract}

\setcounter{page}{1}
\section{Introduction}\label{sec.intro}

 The techniques of symmetrization are a very popular tool of obtaining a priori estimates for the solutions of different partial differential  equations,  notably  those of elliptic and parabolic type. Symmetrization techniques appear in classical works like \cite{MR0046395, PS1951}. The application of  Schwarz  symmetrization to obtaining a priori estimates for elliptic problems is described by Weinberger in \cite{Wein62}, see also  \cite{Maz}.  The  standard  elliptic result refers to the solutions of an equation of the form
$$
Lu=f,  \qquad Lu=-\sum_{i,j} \partial_i(a_{ij}\partial_j u)\,,
$$
posed in a bounded domain $\Omega\subseteq \ren$;  the coefficients $\{a_{ij}\}$ are assumed to be bounded, measurable and satisfy the usual ellipticity condition; finally, we take zero Dirichlet boundary conditions on the boundary $\partial\Omega$. The now classical analysis, introduced by Talenti \cite{Talenti1}, leads to pointwise comparison between the symmetrized version of the actual solution of the problem $u(x)$ and
the radially symmetric solution $v(|x|)$ of some radially symmetric model problem which is posed in a ball with the same volume as $\Omega$. Sharp a priori estimates for the solutions are then derived. We refer to the papers \cite{Talenti4, ATL90}  in the framework of linear operators, and \cite{Talenti3, FerMess} where comparison results are obtained when dealing with nonlinear elliptic operators of divergence type.
There is a large literature in this topic with many interesting developments.

When this technique is applied to parabolic equations, the general program of comparison with a model problem of radial type still works, but the result of pointwise comparison need not hold and has to be replaced by comparison of $L^p$ norms at every time $t>0$. Actually, a more basic result, called
{\sl comparison of concentrations} is true, cf. Bandle \cite{Bandle, Band2} where linear parabolic equations with smooth coefficients are discussed.
Such results have been extended in  works like  \cite{Mossino}, \cite{AlvVolpVolz1}, \cite{VolpVolz} for weak solutions of linear parabolic problems with discontinuous coefficients.   Relevant definitions about symmetrization,  rearrangements and  concentration are recalled in Section~\ref{sec.prelim}.

\medskip

\noindent {\sc Elliptic approach to nonlinear parabolic problems.} An extension of the symmetrization results to nonlinear parabolic equations of possibly degenerate type, more precisely of the porous medium type, was done by the first author in \cite{Vsym82}. The paper considers the evolution
problem
\begin{equation}\label{evol.pbm}
\partial_t u=\Delta A(u), \quad u(0)=u_0,
 \end{equation}
where $A$ a monotone increasing real function\footnote{More generally, $A$ can  be a maximal monotone graph, but that generality  is of no concern for us here.}
and $u_0$ is a suitably given initial datum which is assumed to be integrable. For simplicity the problem was posed for $x\in \ren$,  but bounded open sets can be used as spatial domains. The
novel idea of the paper was to use the famous Crandall-Liggett Implicit Discretization theorem \cite{CL71} to reduce the evolution problem to a sequence of nonlinear elliptic problems of the iterative form
\begin{equation}
- h\,\Delta A(u(t_k))+u(t_k)=u(t_{k-1}),\quad k=1,2, \cdots,
\end{equation}
where $t_k=kh$, and $h>0$  is the time step per iteration. Writing $A(u)=v$, the resulting chain of elliptic problems can be written in the common form
\begin{equation}\label{ell.eq}
h \,L v + B(v)= f\,, \quad B=A^{-1}.
\end{equation}
General theory of these equations, cf.  \cite{BBC1975}, ensures that the solution map: $$T:f\mapsto u=B(v)$$ is a contraction in some Banach space, which happens to be $L^1(\Omega)$. Note that the constant $h>0$ is not essential, it can be put to 1 by scaling. In that context, the symmetrization
result can be split into two results:

(i) the first one applies to rearranged right-hand sides and solutions.  It says that
if two r.h.s. functions $f_1, f_2$, are rearranged and satisfy a concentration comparison of the form $f_1 \prec f_2$, then the same applies to the solutions, in the form $B(v_1)\prec B(v_2)$.\footnote{See the definitions of  the order relation $\prec$ in Section \ref{sec.prelim}.}

(ii) The second result  aims at comparing the solution $v$ of equation \eqref{ell.eq} with  a non-rearranged function $f$  with the solution $\tilde v$ corresponding to $f^\#$, the  radially decreasing rearrangement of $f$. We obtain that  $\tilde v$ is a rearranged function and $ B(v^\#)\prec
B(\tilde v)$, i.\,e., $B(v)$ is less concentrated than $B(\tilde v)$.

This precise pair of comparison results can be combined to obtain similar results along the whole chain of iterations $u(t_k)$ of the evolution process, if discretized as indicated above. This allows in turn to conclude the symmetrization theorems (concentration comparison and comparison of $L^p$ norms)
for the evolution problem \eqref{evol.pbm}. This approach has had a large expansion in the past decades, cf. \cite{VANS05} and references. There is no difficulty in considering equations with a right-hand side, like $u_t=\Delta A(u)+f$, as long as $f\in L^1(Q_T)$, $Q_T=\ren\times(0,T)$. We also mention how time
discretization and symmetrization tools can be combined together to get interesting comparison results for some types of parabolic equations with double nonlinearity, such as $b(u)_t=\Delta A(u)+f$, with special assumptions on $b$, see \cite{Diaz1} and  \cite{AlvVolpVolz2}, and to equations with weights \cite{Reyes}. The technique also applies for $p$-Laplacian operators, cf. \cite{Vport}.

\medskip

\noindent {\sc Equations with fractional operators.} The study of elliptic and parabolic equations involving nonlocal operators, usually of fractional type, is currently the subject of great attention. In that sense, it is quite natural to investigate how to apply symmetrization techniques to the
elliptic equations like
\begin{equation}
(-\Delta)^{\sigma/2}v=f,
\end{equation}
where the standard Laplacian operator $\Delta$ is replaced by one of the fractional Laplacian operators $(-\Delta)^{\sigma/2}$, $0<\sigma<2$, as defined in \cite{Landkof72, Stein70}.
The study of this question has been successfully implemented by the second author and Di Blasio in a recent paper \cite{BV} by an interesting technique of analysis of Steiner symmetrization of an equivalent extended problem, based on the  extension technique used by Caffarelli and Silvestre for the definition of
$\sigma$-Laplacian operator, \cite{CaffS}. For previous uses of Steiner symmetrization in standard elliptic problems see \cite{ATLD}. The results of \cite{BV} include a comparison of
concentrations, in the form \ $v^\#\prec \tilde v $, that parallels the result that holds in the standard Laplacian case; note however that no pointwise comparison is obtained, so the result looks a bit like the parabolic results of the standard theory as mentioned above.

In the present paper we are interested in considering the application of such symmetrization techniques
to linear or nonlinear parabolic equations with similar fractional Laplacian operators. To be specific, we will focus on the equations of the form
\begin{equation}\label{nolin.parab}
\partial_t u +(-\Delta)^{\sigma/2}A(u)=f, \qquad 0<\sigma<2\,.
\end{equation}
Following the theory for the standard Laplacian just sketched, we want to consider as nonlinearity $A$ an increasing real function such that $A(0)=0$, and we may accept some other regularity conditions as needed, like $A$ smooth with $A'(u)>0$ for all $u>0$. The problem is posed in the whole space $\ren$. We want to pay special attention to the form $A(u)=u^m$ with $m>0$; the equation is then
called the
Fractional
Heat Equation (FHE) when $m=1$, the Fractional Porous Medium Equation (FPME)  if $m>1$, and the Fractional Fast Diffusion Equation (FFDE) if $m<1$.
Let us
recall
that the linear equation \ $\partial_t u +(-\Delta)^{\sigma/2}u=0$  is a model of so-called anomalous diffusion, a much studied topic in physics, see
for instance \cite{AT, JKOlla, MMM, VIKH, WZ} and the references therein.  The interest in these operators has a long history in
Probability since the fractional Laplacian operators of the form $(-\Delta)^{\sigma/2}$  are infinitesimal generators of stable L\'{e}vy processes,
see \cite{Applebaum, Bertoin, Valdinoc}.

For $A(u)=u^m$ and general $m>0$ we obtain a nonlinear diffusion model; the theory of existence of weak solutions for the initial value problem has
been addressed by the first author and collaborators in \cite{pqrv, pqrv2, vazBaren}, and the main properties have been obtained. In particular, if
we
take
initial data in $L^1$, then an $L^1$-contraction semigroup is generated, and the Crandall-Liggett discretization theorem applies. Extension of this
results to general smooth $A$ is done in
\cite{pqrv4}.

The application of the method of implicit time discretization leads to the nonlinear equation of elliptic type
\begin{equation}\label{nolin.ell}
h\,(-\Delta)^{\sigma/2}v+B(v) =f
\end{equation}
posed again in the whole space $\ren$ or in an open subdomain $\Omega\subset \ren$ with zero Dirichlet boundary conditions; $h>0$ is a non-important
constant,
and the nonlinearity $B$ is the inverse function to the monotone function $A$ that appears in the parabolic equation.

 \medskip

\noindent {\sc Organization of the paper and main results.} Section \ref{sec.prelim} contains the preliminaries about symmetrization and mass
concentration that we will need.

\noindent $ \bullet$ As a first step of our analysis, we address in Section \ref{sect.ell} the issue of comparison of concentrations for rearranged
functions,
more precisely how to compare the rearrangement of the solution of an elliptic problem with data $f$ with the solution of  a radial problem with
data $f^{\#}$, rearrangement of $f$. The technique used in \cite{BV} does not work  for the
modified equation with lower order term \eqref{nolin.ell}. We supply in this paper the proof of elliptic concentration comparison in the two forms
that
are
needed to try pass to the parabolic problem via discretization in time. However, the results are complete only in the case where $B$ is a convex
function
and
$\Omega=\ren$. Though the elliptic results are used here as a step towards the parabolic theory, they have an interest in themselves as an
improvement
on the symmetrization result developed  in \cite{BV}.

\noindent $ \bullet$ Complementing this analysis, we  prove in Section \ref{sec.ell.count} that one the elliptic comparison results that is needed to
build a good parabolic theory is false in the case of a concave $B$ of the form $B(v)=v^m$, $0<m<1$.

\noindent $ \bullet$ The main issue of symmetrization for linear or nonlinear fractional parabolic equations is addressed in Section \ref{sec.par}. After the iteration steps described above, the elliptic results allow to conclude  similar comparison results for the mild solutions of the evolution problem \eqref{nolin.parab} when $A$ is concave, i.\,e., in the range of exponents $0<m<1$ of the Fractional Fast Diffusion Equation (FFDE), and also in the most popular case, the Fractional Heat Equation (FHE)
\begin{equation}\label{ell.eq2}
\partial_t u +(-\Delta)^{\sigma/2}u=f, \qquad u(0)=u_0.
\end{equation}

A number of consequences are derived from the symmetrization result in the form of a priori estimates,  much in the manner these consequences are derived in the case of equation involving the standard Laplacian. In particular, we can use the Barenblatt solutions of the FHE and FFDE constructed in
\cite{vazBaren} as a worst-case to obtain a priori estimates for the solutions in the $L^p$ spaces, a result that was one of the main corollaries of paper \cite{Vsym82}. Such consequences are important in developing the general theory of such equations, which is one of the aims of the symmetrization
techniques.  In order to keep a reasonable length for this paper, we have decided to explain such consequences in a companion paper, \cite{VazVol2}.

\noindent $ \bullet$ Returning to the presentation of the main results, an important gap was therefore left in the analysis, namely to examine what happens with this approach when applied to the FPME with $m>1$, or more generally to \eqref{nolin.parab} when $A$ is not concave. To our surprise, the result
of comparison of concentrations  is false for the evolution problem, i.\,e., for the FPME with $m>1$. As a consequence, we cannot use the Barenblatt solutions of the FPME as a worst case to obtain a priori estimates for the solutions in the $L^p$ spaces. The surprising negative results about mass comparison for parabolic equations are described in Section \ref{sec.neg.par}. This is to be seen in parallel to the counterexample found for the elliptic equation \eqref{ell.eq2}; by the way, this one was found later and is less intuitive.

We conclude by a short section containing comments, extensions and open problems.

\section{Preliminaries on symmetrization}\label{sec.prelim}
\setcounter{equation}{0}

A measurable real function $f$ defined on $\R^{N}$ is called \emph{radially symmetric} (\emph{radial},  for short) if there is a function
$\widetilde{f}:[0,\infty)\rightarrow \R$ such that $f(x)=\widetilde{f}(|x|)$ for all $x\in \R^{N}$. We will often write $f(x)=f(r)$,
$r=|x|\ge0$ for
such functions by abuse of notation. We say that $f$ is \emph{rearranged} if it is radial, nonnegative and $\widetilde{f}$ is a
right-continuous,
non-increasing
function of $r>0$. A similar definition can be applied for real functions defined on a ball $B_{R}(0)=\left\{x\in\R^{N}:|x|<R\right\}$.

Now, let $\Omega$ be an open set of $%
\mathbb{R}
^{N}$ and $f$ be a real measurable function on $\Omega$. We will denote by $\left\vert
\cdot\right\vert $ the $N$-dimensional Lebesgue measure. We define the
\emph{distribution function} $\mu_{f}$ of $f$ as%
\[
\mu_{f}\left(  k\right)  =\left\vert \left\{  x\in\Omega:\left\vert f\left(
x\right)  \right\vert >k\right\}  \right\vert \text{ , }k\geq0,
\]
and the \emph{decreasing rearrangement} of $f$ as%
\[
f^{\ast}\left(  s\right)  =\sup\left\{ k\geq0:\mu_{f}\left(  k\right)
>s\right\}  \text{ , }s\in\left(  0,\left\vert \Omega\right\vert \right).
\]
We may also  think of extending $f^{\ast}$ as the  zero function in $[|\Omega|,\infty)$ if $\Omega$ is bounded. From this definition it turns out
that $\mu_{f^{\ast}}=\mu_{f}$ (\emph{i.\,e.\,,} $f$, and $f^{\ast}$ are equi-distributed) and $f^{\ast}$ is exactly  the \emph{generalized
inverse} of
$\mu_{f}$.
Furthermore, if $\omega_{N\text{ }}$ is the measure of the unit ball in $%
\mathbb{R}
^{N}$ and $\Omega^{\#}$ is the ball of $%
\mathbb{R}
^{N}$ centered at the origin having the same Lebesgue measure as $\Omega,$ we define the
function
\[
f^{\#}\left(  x\right)  =f^{\ast}(\omega_{N}\left\vert x\right\vert
^{N})\text{ \ , }x\in\Omega^{\#},
\]
that will be called \emph{spherical decreasing rearrangement} of $f$. From this definition it follows that $f$ is rearranged if and only if
$f=f^{\#}$.

For an exhaustive treatment of rearrangements we refer to \cite{Bandle}, \cite{Kawohl}, or the appendix in \cite{Talenti2}. Here, we just recall
the conservation of the
$L^{p}$
norms (coming from the definition of rearrangements and the classical \emph{Cavalieri principle}): for all $p\in[1,\infty]$
\[
\|f\|_{L^{p}(\Omega)}=\|f^{\ast}\|_{L^{p}(|0,\Omega|)}=\|f^{\#}\|_{L^{p}(\Omega^{\#})}\,,
\]
as well as the classical Hardy-Littlewood inequality (see \cite{MR0046395})%
\begin{equation}
\int_{\Omega}\left\vert f\left(  x\right)  g\left(
x\right)  \right\vert dx\leq\int_{0}^{\left\vert \Omega\right\vert }f^{\ast
}\left(  s\right)  g^{\ast}\left(  s\right)  ds=\int_{\Omega^{\#}}f^{\#}(x)\,g^{\#}(x)\,dx\,,
\label{HardyLit}%
\end{equation}
where $f,g$ are measurable functions on $\Omega$.

\noindent $\bullet$ We will often deal with two-variable functions of the type
\begin{equation}\label{f}%
f:\left(  x,y\right)  \in\mathcal{C}_{\Omega}\rightarrow f\left(  x,y\right)
\in{\mathbb{R}}
\end{equation}
defined on the cylinder  $\mathcal{C}%
_{\Omega}:=\Omega\times\left(  0,+\infty\right)  $, and measurable with respect to
$x.$ In that case, it will be convenient to define the so-called {\sl Steiner symmetrization} of $\mathcal{C}_{\Omega}$ with
respect to the variable $x$, namely the set \ \hbox{$\mathcal{C}_{\Omega}^{\#}:=\Omega
^{\#}\times\left(  0,+\infty\right).$} Furthermore, we will denote by $\mu
_{f}\left(  k,y\right)  $ and $f^{\ast}\left(  s,y\right)  $ the distribution
function and the decreasing rearrangements of (\ref{f}), with respect to $x$
for $y$ fixed, and we will also define the function%
\[
f^{\#}\left(  x,y\right)  =f^{\ast}(\omega_{N}|x|^{N},y)
\]
which is called the \emph{Steiner symmetrization} of $f$, with respect to the line
$x=0.$ Clearly, $f^{\#}$ is a spherically symmetric and decreasing function
with respect to $x$, for any fixed $y$.

\noindent $\bullet$ There are  some interesting differentiation formulas which turn out to be very useful in our approach. Typically, they are used
when one wants to get sharp estimates satisfied by the rearrangement $u^{\ast}$ of a solution $u$ to a certain parabolic problem, for in that context
it becomes
crucial to differentiate with respect to the extra variable $y$ under the integral symbol, in the form
\[
\int_{\{u(x,y)>u^{*}(s,y)\}}\frac{\partial u}{\partial y}(x,y)\,dx\,.
\]
For the sake of completeness, we recall here two formulas, of first and second order, available in literature. The following proposition can be
found
in
\cite{Mossino}, and is a generalization of a well-known result by Bandle (see \cite{Bandle}).
\begin{proposition}\label{BANDLE}
Suppose that $f\in H^{1}(0,T;L^{2}(\Omega))$ for some $T>0$. Then $$f^{*}\in H^{1}(0,T;L^{2}(0,|\Omega|))$$ and if \
$|\left\{f(x,t)=f^{*}(s,t)\right\}|=0$ \
for
a.e. $(s,t)\in(0,|\Omega|)\times(0,T)$, the following differentiation formula holds:
\begin{equation}
\int_{f(x,y)>f^{*}(s,y)}\frac{\partial f}{\partial y}(x,y)\,dx=\int_{0}^{s}\frac{\partial f^{*}}{\partial y}(\tau,y)\,d\tau.\label{Rakotoson}
\end{equation}
\end{proposition}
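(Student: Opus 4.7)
The plan is to establish the formula by studying the primitive $F(s,y) := \int_0^s f^*(\tau, y)\, d\tau$ and computing $\partial_y F(s,y)$ in two different ways: once by passing the derivative through the integral, and once via a variational representation of $F$ as a supremum over subsets of $\Omega$ of measure $s$. The non-degeneracy hypothesis $|\{f(\cdot,y)=f^*(s,y)\}|=0$ will be precisely what is needed to identify the maximizing set with the super-level set $\{f(\cdot,y)>f^*(s,y)\}$ and to justify an envelope-type differentiation.

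First, I would verify that $f^*\in H^1(0,T;L^2(0,|\Omega|))$. The crucial ingredient is the non-expansivity of the decreasing rearrangement on $L^2$: for any $s,t$ one has $\|f^*(\cdot,t)-f^*(\cdot,s)\|_{L^2(0,|\Omega|)}\le \|f(\cdot,t)-f(\cdot,s)\|_{L^2(\Omega)}$. Combined with $f\in H^1(0,T;L^2(\Omega))$ this yields absolute continuity of $t\mapsto f^*(\cdot,t)$ into $L^2$, hence the $H^1$ regularity, together with the pointwise bound $\|\partial_y f^*(\cdot,y)\|_{L^2}\le \|\partial_y f(\cdot,y)\|_{L^2}$ for a.e.\ $y$. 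In particular, for each fixed $s$, $y\mapsto F(s,y)$ lies in $H^1(0,T)$ and can be differentiated under the integral sign:
\[
\partial_y F(s,y)=\int_0^s \partial_y f^*(\tau,y)\,d\tau\qquad\text{for a.e. } y.
\]

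Next comes the variational side. By the Hardy-Littlewood inequality and the very definition of $f^*$,
\[
F(s,y)=\int_0^s f^*(\tau,y)\,d\tau=\sup\Bigl\{\int_E f(x,y)\,dx:E\subset\Omega,\;|E|=s\Bigr\},
\]
and the hypothesis $|\{f(\cdot,y)=f^*(s,y)\}|=0$ guarantees that the essentially unique maximizer is the super-level set $E_y:=\{x\in\Omega:f(x,y)>f^*(s,y)\}$, which then has measure exactly $s$. Writing $F(s,y+h)\ge \int_{E_y}f(x,y+h)\,dx$ together with $F(s,y)=\int_{E_y}f(x,y)\,dx$ on the one hand, and $F(s,y+h)=\int_{E_{y+h}}f(x,y+h)\,dx$ together with $F(s,y)\ge \int_{E_{y+h}}f(x,y)\,dx$ on the other, I would form incremental quotients in $h$ and use $f\in H^1(0,T;L^2(\Omega))$ to pass to the limit via Fubini, concluding
\[
\partial_y F(s,y)=\int_{E_y}\partial_y f(x,y)\,dx.
\]
Equating the two expressions for $\partial_y F$ yields the differentiation formula \eqref{Rakotoson}.

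The main obstacle is to justify this envelope step rigorously. The lower bound $h^{-1}(F(s,y+h)-F(s,y))\ge h^{-1}\int_{E_y}(f(x,y+h)-f(x,y))\,dx$ is immediate, and as $h\to 0^+$ its right-hand side tends to $\int_{E_y}\partial_y f(x,y)\,dx$ by Fubini and the $L^2$-differentiability of $f$ in $y$. The matching upper bound involves $h^{-1}\int_{E_{y+h}}(f(x,y+h)-f(x,y))\,dx$, and requires controlling the symmetric difference $E_y\triangle E_{y+h}$ as $h\to 0$ and showing that its contribution to $\int \partial_y f$ vanishes. This is exactly where $|\{f(\cdot,y)=f^*(s,y)\}|=0$ enters, since it rules out a plateau of $f$ at the level $f^*(s,y)$ that could trap a positive mass of $\partial_y f$ on the wrong side of the level. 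Once the measure-continuity $|E_y\triangle E_{y+h}|\to 0$ is established (from continuity of $y\mapsto f(\cdot,y)$ in $L^2$ and of $y\mapsto f^*(s,y)$), the absolute continuity of the Lebesgue integral delivers the required cancellation and completes the proof.
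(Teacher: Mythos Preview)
The paper does not supply its own proof of this proposition; it simply records the statement ``for the sake of completeness'' and attributes it to Mossino--Rakotoson \cite{Mossino}, with Bandle's earlier smooth version \cite{Bandle} mentioned as background. So there is no in-paper argument to compare your proposal against.

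That said, your outline is sound and is in fact the standard route used in the cited literature. The non-expansivity of the decreasing rearrangement on $L^2$ gives the $H^1$ regularity of $f^*$ in the time variable, and the Hardy--Littlewood variational formula for $\int_0^s f^*\,d\tau$ combined with an envelope-theorem argument delivers the identity. You have correctly isolated the role of the hypothesis $|\{f(\cdot,y)=f^*(s,y)\}|=0$: it forces the super-level set to have measure exactly $s$ and be the essentially unique maximizer, which is what makes the two-sided envelope estimate close up. The measure-continuity step $|E_y\triangle E_{y+h}|\to 0$ is the only place where some care is needed (one typically argues via $L^2$-continuity of $y\mapsto f(\cdot,y)$ together with the no-plateau condition), and you have flagged it appropriately.
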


Moreover, the following second order differentiation formula (which was also proved in \cite{ATLD} in a more regular framework) is due to Mercaldo and Ferone (see \cite{MR1649548}):

\begin{proposition}
\label{Ferone-Mercaldo} Let $f\in W^{2,\infty}\left(  \mathcal{C}_{\Omega}\right)  $. Then for almost every $y\in(0,+\infty)$ the following
 differentiation formula holds:
\begin{align*}
\int_{f\left(  x,y\right)  >f^{\ast}\left(  s,y\right)  }\frac{\partial^{2}%
f}{\partial y^{2}}\left(  x,y\right)  dx  &  =\frac{\partial^{2}}{\partial
y^{2}}\int_{0}^{s}f^{\ast}\left(  \tau,y\right)  d\tau-\int_{f\left(
x,y\right)  =f^{\ast}\left(  s,y\right)  }\frac{\left(  \frac{\partial
f}{\partial y}\left(  x,y\right)  \right)  ^{2}}{\left\vert \nabla
_{x}f\right\vert }\,d\mathcal{H}^{N-1}\left(  x\right) \\
&  \!\!\!+\left(  \int_{f\left(  x,y\right)  =f^{\ast}\left(  s,y\right)
}\!\frac{\frac{\partial f}{\partial y}\left(  x,y\right)  }{\left\vert
\nabla_{x}f\right\vert }\,d\mathcal{H}^{N-1}\left(  x\right)  \!\right)
^{2}\!\left(  \!\int_{f\left(  x,y\right)  =f^{\ast}\left(  s,y\right)
}\!\frac{1}{\left\vert \nabla_{x}f\right\vert }\,d\mathcal{H}^{N-1}\left(
x\right)  \!\right)  ^{-1}\!.
\end{align*}

\end{proposition}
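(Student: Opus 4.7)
The plan is to derive the identity by differentiating once more in $y$ the first-order formula of Proposition \ref{BANDLE}. Starting from
\[
\int_{\{f(x,y)>f^{\ast}(s,y)\}}\partial_{y}f(x,y)\,dx=\int_{0}^{s}\partial_{y}f^{\ast}(\tau,y)\,d\tau,
\]
differentiation of the right-hand side in $y$ immediately produces the first term $\partial^{2}_{yy}\int_{0}^{s}f^{\ast}\,d\tau$ of the claim. The remaining task is to compute the $y$-derivative of the left-hand side, which is an integral whose domain $\Omega(y):=\{f(\cdot,y)>f^{\ast}(s,y)\}$ itself depends on $y$; a Reynolds-type transport theorem will supply the bulk term $\int_{\Omega(y)}\partial^{2}_{yy}f\,dx$ together with a boundary term.

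To execute this step, I would use $f\in W^{2,\infty}(\mathcal{C}_{\Omega})$ together with Sard's theorem to fix, for a.e. $y$, a full-measure set of regular values $s$ for which the level set $\{f(\cdot,y)=f^{\ast}(s,y)\}$ is a $C^{1}$ hypersurface with $|\nabla_{x}f|>0$. Setting $F(x,y):=f(x,y)-f^{\ast}(s,y)$, the outward unit normal to $\{F>0\}$ is $-\nabla_{x}f/|\nabla_{x}f|$ and the outward normal velocity of the moving level reads $v_{n}=(\partial_{y}f-\partial_{y}f^{\ast}(s,y))/|\nabla_{x}f|$. Reynolds' theorem then gives
\[
\frac{d}{dy}\int_{\Omega(y)}\partial_{y}f\,dx=\int_{\Omega(y)}\partial^{2}_{yy}f\,dx+\int_{\{f=f^{\ast}\}}\frac{\partial_{y}f\,(\partial_{y}f-\partial_{y}f^{\ast}(s,y))}{|\nabla_{x}f|}\,d\mathcal{H}^{N-1}.
\]
Solving for $\int_{\Omega(y)}\partial^{2}_{yy}f\,dx$ produces $-\int (\partial_{y}f)^{2}/|\nabla_{x}f|\,d\mathcal{H}^{N-1}$ (the second term in the claim) plus a correction $+\,\partial_{y}f^{\ast}(s,y)\cdot \int (\partial_{y}f)/|\nabla_{x}f|\,d\mathcal{H}^{N-1}$.

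It remains to express $\partial_{y}f^{\ast}(s,y)$ in level-set form. For this I would differentiate Proposition \ref{BANDLE} in $s$ (keeping $y$ fixed) by the same Reynolds argument, obtaining
\[
\partial_{y}f^{\ast}(s,y)=-\partial_{s}f^{\ast}(s,y)\int_{\{f=f^{\ast}\}}\frac{\partial_{y}f}{|\nabla_{x}f|}\,d\mathcal{H}^{N-1},
\]
while the coarea formula applied to the identity $s=|\{f(\cdot,y)>f^{\ast}(s,y)\}|$ yields
\[
-\partial_{s}f^{\ast}(s,y)=\left(\int_{\{f=f^{\ast}\}}\frac{1}{|\nabla_{x}f|}\,d\mathcal{H}^{N-1}\right)^{-1}.
\]
Substituting, the correction term turns into exactly the quotient displayed in the claim. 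I expect the main obstacle to lie not in the algebraic bookkeeping but in the rigorous justification of Reynolds' transport identity and of differentiation under the integral sign in the $W^{2,\infty}$ class: one must restrict to the full-measure set of regular levels provided by Sard and handle carefully the contribution of critical values where $|\nabla_{x}f|$ may vanish. This is the core technical content of \cite{MR1649548}, and of \cite{ATLD} in the smoother setting.
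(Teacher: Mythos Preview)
The paper does not supply its own proof of this proposition: it is quoted as a known result, attributed to Ferone and Mercaldo \cite{MR1649548} (and to \cite{ATLD} in a more regular framework). Your sketch is correct and is essentially the strategy of those references: differentiate the first-order identity of Proposition~\ref{BANDLE} once more in $y$, handle the $y$-dependence of the superlevel set $\{f(\cdot,y)>f^{\ast}(s,y)\}$ via a Reynolds transport argument, and then close by expressing $\partial_{y}f^{\ast}(s,y)$ through the coarea formula. The algebra you wrote down checks out, and the technical points you flag---restricting to regular levels via Sard and justifying the transport identity for $W^{2,\infty}$ functions---are precisely the content of \cite{MR1649548}.
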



\subsection{Mass concentration}

We will provide estimates of the solutions of our elliptic and parabolic problems in terms of their integrals. For that purpose, the following
definition, taken from   \cite{Vsym82}, is remarkably useful.

\begin{definition}
Let $f,g\in L^{1}_{loc}(\R^{N})$ be two radially symmetric functions on $\R^{N}$. We say that $f$ is less concentrated than $g$, and we write
$f\prec
g$ if for
all $R>0$ we get
\[
\int_{B_{R}(0)}f(x)dx\leq \int_{B_{R}(0)}g(x)dx.
\]
\end{definition}
The partial order relationship $\prec$ is called \emph{comparison of mass concentrations}.
Of course, this definition can be suitably adapted if $f,g$ are radially symmetric and locally integrable functions on a ball $B_{R}$. Besides, if
$f$
and $g$ are locally integrable on a general open set $\Omega$, we say that $f$ is less concentrated than $g$ and we write again $f\prec g$ simply if
$f^{\#}\prec g^{\#}$,  but this extended definition has no use if $g$ is not rearranged.

The comparison of mass concentrations enjoys a nice equivalent formulation if $f$ and $g$ are rearranged, whose proof we refer to  \cite{MR0046395},
\cite{Chong}, \cite{VANS05}:

\begin{lemma}\label{lemma1}
Let $f,g\in L^{1}(\Omega)$ be two rearranged functions on a ball $\Omega=B_{R}(0)$. Then $f\prec g$ if and only if for every convex
nondecreasing
function
$\Phi:[0,\infty)\rightarrow [0,\infty)$ with $\Phi(0)=0$ we have
\begin{equation}
\int_{\Omega}\Phi(f(x))\,dx\leq \int_{\Omega}\Phi(g(x))\,dx.
\end{equation}
This result still holds if $R=\infty$ and $f,g\in L^{1}_{loc}(\R^{N})$ with $g\rightarrow0$ as $|x|\rightarrow\infty$.
\end{lemma}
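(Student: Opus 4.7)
\medskip\noindent\textbf{Proof plan for Lemma \ref{lemma1}.}

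The first move is a change of variables that turns the statement into one about one-dimensional concave functions. Since $f,g$ are rearranged on $\Omega=B_R$, setting $s=\omega_N|x|^N$ one has $\int_{B_r}f\,dx=\int_0^{\omega_N r^N}f^{\ast}(\sigma)\,d\sigma$ (and similarly for $g$), so $f\prec g$ is equivalent to $F(s)\le G(s)$ for every $s\in(0,|\Omega|)$, where
\[
F(s):=\int_0^s f^{\ast}(\sigma)\,d\sigma,\qquad G(s):=\int_0^s g^{\ast}(\sigma)\,d\sigma.
\]
Both functions are concave (since $f^{\ast},g^{\ast}$ are nonincreasing), nondecreasing, and vanish at $0$. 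The whole argument will reduce to recognizing that the quantities $\int_{\Omega}(f-k)_+\,dx$ are precisely the Legendre transforms of $-F$, and then exploiting concave duality.

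For the direction $(\Rightarrow)$, first observe the elementary identity, valid for rearranged $f$ and any $k\ge0$:
\[
\int_{\Omega}(f-k)_+\,dx=\int_0^{\mu_f(k)}\bigl(f^{\ast}(\sigma)-k\bigr)\,d\sigma=F(\mu_f(k))-k\,\mu_f(k)=\sup_{s\in(0,|\Omega|)}\bigl[F(s)-ks\bigr],
\]
the last equality following from concavity of $F$ together with $F'(s)=f^{\ast}(s)$ a.e. Now any convex nondecreasing $\Phi:[0,\infty)\to[0,\infty)$ with $\Phi(0)=0$ admits the integral representation
\[
\Phi(t)=\Phi'(0^+)\,t+\int_0^{\infty}(t-k)_+\,d\mu(k),
\]
for a suitable nonnegative Radon measure $\mu$ on $(0,\infty)$; this follows from writing $\Phi(t)=\int_0^t\Phi'(\tau)\,d\tau$ and integrating by parts against the nondecreasing function $\Phi'$. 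Plugging this into $\int_{\Omega}\Phi(f)\,dx$ and using Fubini, the problem reduces to verifying the inequality for $\Phi(t)=t$ and for each $\Phi_k(t)=(t-k)_+$. The former is the case $s=|\Omega|$ of $F\le G$; the latter is immediate from the displayed identity together with the pointwise bound $F\le G$.

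For $(\Leftarrow)$, specialize the hypothesis to the admissible test $\Phi_k(t)=(t-k)_+$, obtaining $\sup_s[F(s)-ks]\le\sup_s[G(s)-ks]$ for every $k\ge0$. The key ingredient is then the standard concave-duality representation: any concave function $F\ge0$ on $[0,|\Omega|]$ with $F(0)=0$ satisfies
\[
F(s_0)=\inf_{k\ge0}\Bigl[k\,s_0+\sup_{\sigma\in(0,|\Omega|)}\bigl(F(\sigma)-k\sigma\bigr)\Bigr]\qquad\text{for every }s_0\in(0,|\Omega|),
\]
since $F$ is the pointwise infimum of its supporting affine majorants. Applying this to $F$ and using the inequality between the Legendre transforms yields $F(s_0)\le G(s_0)$, which is exactly $f\prec g$.

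The extension to $R=\infty$ is handled by truncation: the decay hypothesis $g(x)\to 0$ as $|x|\to\infty$ (together with $g\in L^1_{loc}$) ensures that $\mu_g(k)<\infty$ for every $k>0$, so the preceding identity $\int(g-k)_+\,dx=\sup_s[G(s)-ks]$ remains meaningful on every bounded interval $(0,M)$ of the $s$-variable; passing to the limit $M\to\infty$ in the duality argument recovers $F\le G$ globally. The principal technical obstacle is precisely the legitimacy of the concave-duality step in this unbounded setting with possibly non-integrable $f$, and it is overcome by working on $(0,M)$ and only afterward letting $M\to\infty$, using the monotonicity in $s$ of both $F$ and $G$.
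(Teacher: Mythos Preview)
The paper does not actually prove Lemma~\ref{lemma1}; it simply states the result and refers the reader to Hardy--Littlewood--P\'olya, Chong, and the survey \cite{VANS05}. So there is no ``paper's proof'' to compare against, only the classical literature.

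Your argument is correct and is essentially the classical one found in those references. The forward direction via the layer-cake representation $\Phi(t)=\Phi'(0^+)\,t+\int_0^\infty(t-k)_+\,d\mu(k)$ together with the identity $\int_\Omega(f-k)_+\,dx=\sup_s[F(s)-ks]$ is exactly the Hardy--Littlewood--P\'olya mechanism. The backward direction via Fenchel-type duality for the concave primitives $F,G$ is likewise standard; your use of the biconjugate formula $F(s_0)=\inf_{k\ge0}\bigl[ks_0+\sup_\sigma(F(\sigma)-k\sigma)\bigr]$ is legitimate because $F$ is concave, continuous, and nondecreasing (so the supporting slopes are nonnegative and the restriction to $k\ge0$ loses nothing). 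The $R=\infty$ extension is a little sketchy as written---in particular you should note that the hypothesis $\int(f-k)_+\le\int(g-k)_+<\infty$ for every $k>0$ already forces $\mu_f(k')<\infty$ for $k'>k$, so the Legendre identity for $F$ also makes sense---but the truncation-and-limit strategy you outline is the right way to close it.
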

From this Lemma it easily follows that if $f\prec g$ and $f,g$ are rearranged, then
\begin{equation}
\|f\|_{L^{p}(\Omega)}\leq \|g\|_{L^{p}(\Omega)}\quad \forall p\in[1,\infty].
\end{equation}


\section{Elliptic Problems with lower order term}\label{sect.ell}
\setcounter{equation}{0}

\subsection{Recall of existence, uniqueness and main properties}

As explained in the Introduction, the implicit time discretization scheme directly connects the analysis of the evolution equation \eqref{nolin.parab} to solving the elliptic equation \eqref{nolin.ell}. Therefore, we start our analysis by the following nonlocal Dirichlet problem with homogeneous boundary
condition:
\begin{equation} \label{eq.1}
\left\{
\begin{array}
[c]{lll}%
\left(  -\Delta\right)^{\sigma/2}v+  B(v)=f\left(  x\right)   &  & in\text{ }%
\Omega,\\[6pt]
v=0 &  & on\text{ }\partial\Omega,
\end{array}
\right. %
\end{equation}
where $\Omega$ is an open bounded set of ${\mathbb{R}}^{N}$,  $\sigma\in(0,2)$ and $f$ is an integrable function defined in $\Omega$ (we will also take $\Omega={\mathbb{R}}^{N}$, and then no boundary condition is assumed, see below). We assume that the nonlinearity is given by a function $B:\R_{+}\rightarrow\R_{+}$ which is  smooth and monotone increasing with $B(0)=0$ and $B'(v) >0$. It is not essential to consider negative values for our main results, but the general theory can be done in that greater generality, just by assuming that $B$ is extended to a function
$B:\R_{-}\rightarrow\R_{-}$ by
symmetry, $B(-v)=-B(v)$. The fractional-Laplacian operator $(-\Delta)^{\sigma/2}$  acts on functions $u$ in $\Omega$ and is defined through the
spectral
decomposition of $u$, in terms of eigenvalues and eigenfunctions of the Laplacian $-\Delta$ with homogeneous boundary conditions.  Note that we have
changed a bit the notation with respect to equation \eqref{nolin.ell} in the introduction, by eliminating the constant $h>0$, but the change is
inessential for the comparison results.

As explained in   \cite{CT} and \cite{Colorado},  when working in a bounded domain  the fractional Laplacian $(-\Delta)^{\sigma/2}$ can still be defined as a Dirichlet-to-Neumann map (in the same flavor of the construction in  \cite{CaffS} for $\Omega=\ren$), and this allows to connect nonlocal
problems involving $(-\Delta)^{\sigma/2}$ to suitable  degenerate-singular, local problems defined in one more space dimension. In our case, a solution to problem (\ref{eq.1}) is defined as the trace of a properly defined Dirichlet-Neumann problem as follows. If $w$ is a weak solution to the local problem
\begin{equation}
\left\{
\begin{array}
[c]{lll}%
-\operatorname{div}_{x,y}\left(  y^{1-\sigma}\nabla w\right)  =0 &  & in\text{
}\mathcal{C}_{\Omega},\\[6pt]
\ w=0 &  & on\text{ }\partial_{L}\mathcal{C}_{\Omega},\\[6pt]
\displaystyle{-\frac{1}{\kappa_{\sigma}}\lim_{y\rightarrow0^{+}}y^{1-\sigma}\,\dfrac{\partial w}{\partial y}(x,y)}+\,B(w(x,0))=f\left(  x\right)   &
&
in\text{ }\Omega,
\end{array}
\right.  \label{eq.3}%
\end{equation}
where $\mathcal{C}_{\Omega}:=\Omega\times\left(  0,+\infty\right)  $ is the
cylinder of basis $\Omega$,  $\partial_{L}\mathcal{C}_{\Omega}%
:=\partial\Omega\times\lbrack0,+\infty)$ is its lateral boundary, and $\kappa_{\sigma}$ is the constant (see \cite{CaffS})
\[
\kappa_{\sigma}:=\frac{2^{1-\sigma}\,\Gamma(1-\frac{\sigma}{2})}{\Gamma(\frac{\sigma}{2})},
\]
then the trace of $w$ over $\Omega$, $\text{Tr}_{\Omega}(w)=w(\cdot,0)=:v$ is said a solution to
problem (\ref{eq.1}). To make this more precise, we introduce the concept of weak solution to problem \eqref{eq.3}. It is convenient to define the
weighted energy space
$$
X_{0}^{\sigma/2}(\mathcal{C}_{\Omega})=\left\{  w\in H^{1}(\mathcal{C}_{\Omega
}),\,w=0\,\text{ on }\partial_{L}\mathcal{C}_{\Omega}\,\,:\int_{\mathcal{C}%
_{\Omega}}y^{1-\sigma}|\nabla_{x,y} w(x,y)|^{2}\,dxdy<\infty\right\}\,,
$$
equipped with the norm
\begin{equation}
\Vert w\Vert_{X_{0}^{\sigma/2}}:=\left(  \int_{\mathcal{C}_{\Omega}}y^{1-\sigma}\,|\nabla w(x,y)|^{2}\,dxdy\right)  ^{1/2}.\label{norm}
\end{equation}
Then, following \cite{pqrv}, \cite{pqrv2} we provide the following definition

\begin{definition}\label{definition2}
Let $\Omega$ be an open bounded set of $\ren$ and $f\in L^{1}(\Omega)$. We say that $w\in X_{0}^{\sigma/2}(\mathcal{C}_{\Omega})$ is a weak solution
to
\eqref{eq.3}
if $Tr_{\Omega}(B(w))=:B(w(x,0))\in L^{1}(\Omega)$  and
\begin{equation}
\int_{\mathcal{C}_{\Omega}}y^{1-\sigma}\nabla_{x,y} w\cdot \nabla_{x,y}\varphi\,dx\,dy+\int_{\Omega} B(w(x,0))
\,\varphi(x,0)dx=\kappa_{\sigma}\int_{\Omega}f(x)\,\varphi(x,0)dx\label{weakfor}
\end{equation}
for all the test functions $\varphi\in C^{1}(\overline{\mathcal{C}_{\Omega}})$ vanishing on the lateral boundary
$\partial_{L}\mathcal{C}_{\Omega}$.
\end{definition}
If $w$ is a solution to the ``extended problem'' \eqref{eq.3}, then the  trace function $v=\text{Tr}_{\Omega}w$ will be called a weak solution
to
problem
\eqref{eq.1}.
Concerning existence of solutions, their smoothness and $L^{1}$ contraction properties, we excerpt some known results from \cite{pqrv}, \cite{pqrv2}, which can be
extended for our more general nonlinearity $B$.
\begin{theorem}[see \cite{pqrv}]\label{th.exist}
For any $f\in L^{\infty}(\Omega)$ there exists a unique weak solution $w\in X_{0}^{\sigma/2}(\mathcal{C}_{\Omega})$ to problem \eqref{eq.3}, such that
$\text{Tr}_{\Omega}(B(w))\in L^{\infty}(\Omega)$. Moreover,

\smallskip

\noindent {\rm (i)} Regularity: we have \nc  $w\in C^{\alpha}(\mathcal{C}_{\Omega})$ for every $\alpha<\sigma$ if $\sigma\le 1$ (resp. $w\in C^{1,\alpha}(\mathcal{C}_{\Omega})$ for every $\alpha<\sigma-1$ if $\sigma> 1$). Arguing as in {\rm \cite{CT}}, higher regularity of $w$ depends easily on higher regularity of $f$ and $B$.

\noindent {\rm (ii)} $L^{1}$ contraction: if \, $w,\widetilde{w}$  are the solutions to \eqref{eq.3} corresponding to data $f,\widetilde{f}$, the
following
$L^{1}$
contraction property holds:
\begin{equation}
\int_{\Omega}\left[B(w(x,0))-B(\widetilde{w}(x,0))\right]_{+}dx\leq\int_{\Omega}[f(x)-\widetilde{f}(x)]_{+}dx.\label{contraction}
\end{equation}
In particular, we have that $w\geq0$ in $\overline{\mathcal{C}}_{\Omega}$ whenever $f\geq0$ on $\Omega$. Furthermore, if we put
$u:=B(w(\cdot,0))$,
then for
all
$p\in [1, \infty]$
we have
\[
\|u\|_{L^{p}(\Omega)}\leq \|f\|_{L^{p}(\Omega)}.
\]

\noindent {\rm (iii)} For data $f\in L^1(\Omega)$ the weak solution is obtained as the limit of the solutions of approximate problems with $f_n\in L^1(\Omega)\cap L^\infty(\Omega)$, $f_n\to f$ in $L^1$, since then the sequence $\{B(w_n(x,0))\}_n$ also converges in $L^1$ to some $B(w(x,0)$,  and
$\|B(w(x,0))\|_1\le \|f\|_1$, hence $v_n$ in uniformly bounded in $L^p$ for all small $p$. Property (ii) holds for such limit solutions.
\end{theorem}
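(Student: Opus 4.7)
The plan is to reproduce the variational and contraction scheme of \cite{pqrv, pqrv2} in the generalized setting where the nonlinearity $B$ is any smooth strictly increasing function with $B(0) = 0$, rather than the pure power $B(v) = v^{1/m}$. Three ingredients drive the argument: the direct method of the calculus of variations on $X_0^{\sigma/2}(\mathcal{C}_\Omega)$ for $f \in L^\infty$; a Stampacchia-type test against a smooth approximation of the positive sign function for the $L^1$ contraction; and approximation by $L^\infty$ data together with that contraction to reach $L^1$ data. The chief technical obstacle is the last step, where the $X_0^{\sigma/2}$ energy estimate is lost and one must work entirely at the level of traces.

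For existence and uniqueness with $f \in L^\infty(\Omega)$, I would set $\Phi(s) = \int_0^s B(\tau)\,d\tau$, which is $C^2$, strictly convex and nonnegative, and minimize
\[
J(w) = \frac{1}{2}\int_{\mathcal{C}_\Omega} y^{1-\sigma}|\nabla w|^2\,dx\,dy + \int_\Omega \Phi(w(x,0))\,dx - \kappa_\sigma\int_\Omega f(x)\,w(x,0)\,dx
\]
on $X_0^{\sigma/2}(\mathcal{C}_\Omega)$. The weighted trace inequality for the extension (see \cite{CaffS, CT}) furnishes a continuous embedding $X_0^{\sigma/2}(\mathcal{C}_\Omega) \hookrightarrow L^{q}(\Omega)$ with $q > 1$, whence $J$ is coercive, strictly convex and weakly lower semicontinuous; the direct method produces a unique minimizer $w$, whose Euler--Lagrange equation is \eqref{weakfor}. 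A Stampacchia truncation at level $k$ (testing against $(w - k)_+$) yields $\|v\|_{L^\infty} \le B^{-1}(\|f\|_\infty)$, hence $\mathrm{Tr}_\Omega(B(w)) \in L^\infty(\Omega)$. For the regularity claim, one invokes the De Giorgi--Nash--Moser theory for divergence-form equations with the $A_2$-weight $y^{1-\sigma}$ developed in \cite{CaffS} and adapted to mixed boundary data in \cite{CT}: the bound $f - B(v) \in L^\infty$ serves as an $L^\infty$ Neumann datum and produces the stated $C^\alpha$ or $C^{1,\alpha}$ regularity depending on whether $\sigma \le 1$ or $\sigma > 1$.

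For the $L^1$ contraction (ii), I would subtract the weak formulations for $w, \widetilde{w}$ corresponding to $f, \widetilde{f}$ and test with $\varphi_\epsilon = p_\epsilon(w - \widetilde{w})$, where $p_\epsilon : \re \to [0,1]$ is smooth, nondecreasing, with $p_\epsilon \equiv 0$ on $(-\infty,0]$ and $p_\epsilon \to \mathbf{1}_{(0,\infty)}$ pointwise. This is an admissible test because $w = \widetilde{w} = 0$ on $\partial_L\mathcal{C}_\Omega$. The bulk contribution
\[
\int_{\mathcal{C}_\Omega} y^{1-\sigma}\,p_\epsilon'(w - \widetilde{w})\,|\nabla(w - \widetilde{w})|^2\,dx\,dy
\]
is nonnegative, so letting $\epsilon \to 0^+$ and using the strict monotonicity identity $\{w(\cdot,0) > \widetilde{w}(\cdot,0)\} = \{B(w(\cdot,0)) > B(\widetilde{w}(\cdot,0))\}$, only the boundary terms survive and deliver \eqref{contraction}. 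Choosing $\widetilde{f} \equiv 0$ yields nonnegativity of $w$, and the $L^p$ estimate on $u = B(v)$ follows from convex composition with the test à la Lemma \ref{lemma1}, equivalently from the $T$-accretivity of the underlying nonlinear operator.

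Finally, for $f \in L^1(\Omega)$ I would approximate by $f_n \in L^\infty \cap L^1$ with $f_n \to f$ in $L^1$, let $w_n$ be the corresponding strong solutions, and set $v_n = w_n(\cdot,0)$, $u_n = B(v_n)$. The contraction forces $\{u_n\}$ to be Cauchy in $L^1(\Omega)$ with limit $u = B(v)$, and strict monotonicity of $B$ gives $v_n \to v$ a.e. The hard part will be identifying the limit as a genuine (mild) solution: the sequence $\{w_n\}$ need not be bounded in $X_0^{\sigma/2}(\mathcal{C}_\Omega)$, so one cannot pass to the limit in \eqref{weakfor} directly. This is handled by invoking the $L^1$ theory of the weighted Neumann extension established in \cite{pqrv, pqrv2}, which guarantees strong convergence of $w_n$ in a weaker topology to a unique mild solution with Neumann datum $\kappa_\sigma(f - u)$; all three conclusions of (ii) then extend to the limit solution.
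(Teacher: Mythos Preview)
Your proposal is correct and essentially mirrors the paper's own argument: the same variational functional (up to the harmless normalization by $\kappa_\sigma$) is minimized via the direct method, the $L^\infty$ trace bound is obtained by truncation, and the $L^1$ contraction and $L^1$-data extension are referred to the arguments of \cite{pqrv,pqrv2}. Where the paper simply cites those references, you have supplied the standard Kato sign-function test and the approximation scheme explicitly, which is precisely what is done there.
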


\noindent We give a short account of the proof of these results for the reader's convenience. See more details on this issue in the forthcoming work \cite{pqrv4}. In order to get the existence of a weak solution, we first define the integral function of $B$
\[
G(t)=\int_{0}^{t}B(\xi)d\xi\,,
\]
then we minimize the functional
\[
\mathcal{J}(w)=\frac{1}{2\kappa_{\sigma}}\int_{\mathcal{C}_{\Omega}}y^{1-\sigma}\,|\nabla
w|^{2}dx\,dy+\int_{\Omega}G(|w(x,0)|)dx-\int_{\Omega}f(x)\,w(x,0)dx
\]
over the space $X_{0}^{\sigma/2}(\mathcal{C}_{\Omega})$.
where $A:=B^{-1}$ is the inverse of $B$. Arguing as in \cite{pqrv2}, and using the H\"{o}lder, trace, and Young inequalities we find that the
functional $\mathcal{J}$ is coercive on $\mathcal{X}$. In order to prove that $\mathcal{J}$ is weak lower semi-continuous, let
$\left\{w\right\}_{n}$ be a sequence in $X_{0}^{\sigma/2}(\mathcal{C}_{\Omega})$ converging weakly to $w$. By the trace embedding theorem, we have
(up
to subsequences)
\[
w_{n}(\cdot,0)\rightarrow w(\cdot,0)\quad\text{strong in }L^{q}(\Omega)\,\,\,\forall q\in[1,2N/(N-\sigma)),
\]
then
\[
w_{n}(\cdot,0)\rightarrow w(\cdot,0)\quad a.e.\,in\,\Omega.
\]
Now Fatou's lemma implies that $\mathcal{J}$ is weakly lower semicontinuous on $X_{0}^{\sigma/2}(\mathcal{C}_{\Omega})$. Then there exists a
minimizer
$w\in X_{0}^{\sigma/2}(\mathcal{C}_{\Omega})$ of
$\mathcal{J}$. Furthermore, a truncation argument shows that we can suppose $w(\cdot,0)\in L^{\infty}(\Omega)$ and
\[
\|w(\cdot, 0)\|_{L^{\infty}(\Omega)}\leq A(\|f\|_{L^{\infty}(\Omega)}).
\]
Finally, computing the first variation of
$\mathcal{J}$ in
the
direction of any $\varphi\in X_{0}^{\sigma/2}(\mathcal{C}_{\Omega})$ we obtain that $w$ is a weak solution to \eqref{eq.3} in the sense of definition
\eqref{definition2}. The contraction property
\eqref{contraction}
in
Theorem \ref{th.exist} follows by the arguments of \cite{pqrv}-\cite{pqrv2}. \nc

\medskip

\noindent $\bullet$ Let us now consider problem \eqref{eq.1} in the whole $\ren$, where the fractional Laplacian is defined by a singular integral. The problem is
\begin{equation} \label{whole}
\left\{
\begin{array}
[c]{lll}%
\left(  -\Delta\right)  ^{\sigma/2}v+  B(v)=f\left(  x\right)   &  & in\text{ }%
\R^{N}\\
&  & \\
v(x)\rightarrow0 &  & as\text{ }|x|\rightarrow\infty,
\end{array}
\right. %
\end{equation}
where $f\in L^{1}(\ren)\cap L^{\infty}(\ren)$, and we can define again a suitable meaning of weak solution, making use of a proper extension problem.
Indeed, if
we denote by $X^{\sigma/2}(\mathcal{C}_{\R^{N}})$, being $\mathcal{C}_{\R^{N}}:=\R_{+}^{N+1} $ the upper half-space, the completion of
$C^{\infty}(\overline{\mathcal{C}_{\R^{N}}})$ with respect to the norm \eqref{norm} with $\Omega$ replaced by $\R^{N}$, then a solution $v$ to
\eqref{whole} is
the trace on $\ren$ of a weak solution
$w\in
X^{\sigma/2}(\mathcal{C}_{\R^{N}})$ to the problem
\begin{equation}
\left\{
\begin{array}
[c]{lll}%
-\operatorname{div}_{x,y}\left(  y^{1-\sigma}\nabla w\right)  =0 &  & in\text{
}\R^{N}\times(0,+\infty)\\[6pt]
\displaystyle{-\frac{1}{\kappa_{\sigma}}\lim_{y\rightarrow0^{+}}y^{1-\sigma}\,\dfrac{\partial w}{\partial y}(x,y)}+\,B(w(x,0))=f\left(  x\right)   &
&
x\in\R^{N}.
\end{array}
\right.  \label{eq.6}%
\end{equation}
Of course, we mean that $w\in X^{\sigma/2}(\mathcal{C}_{\R^{N}})$ is a weak (energy) solution to \eqref{eq.6} if equality \eqref{weakfor} holds, with
$\Omega$
replaced by
$\ren$. In order to obtain the existence and uniqueness of solution to problem \eqref{eq.6}, we can ague as in \cite{pqrv2}. For any $R>0$, we
consider
the
solution $w_{R}$ to \eqref{eq.3} corresponding to the data $f_{R}=f\chi_{B_{R}(0)}$, where $\Omega$ is the ball $B_{R}(0)$ centered at the
origin. If
the data
$f$ is nonnegative (the case of changing sign data can be treated as in \cite{pqrv}), we obtain an increasing sequence of non-negative solutions
$\left\{w_{R}\right\}$ converging to a weak solution $w$ to the problem \eqref{eq.6} in the upper half-space. Then the contraction property
\eqref{contraction}
holds in $\ren$, from which uniqueness and preserving sign property follow. Moreover, if \ $u=B(w(\cdot,0))$ \ then we have
\[
\|u\|_{L^{1}(\ren)}\leq \|f\|_{L^{1}(\ren)},\quad
\|u\|_{L^{\infty}(\ren)}\leq \|f\|_{L^{\infty}(\ren)}.
\]

\noindent{\bf Remarks.} The approximation method we have used to prove the comparison theorem in the whole $\R^{N}$ actually says that we can
approximate the solution $v$ to problem \eqref{whole} with the fractional Laplacian on $\R^{N}$ by a sequence of solutions of Dirichlet problems of
the type \eqref{eq.1} with the
fractional
laplacian
defined on balls, with homogeneous boundary data.

 We point out that Theorem \ref{th.exist} and the related considerations of existence of solutions on $\ren$ still hold if
$B:\R\rightarrow\R$
is assumed to be increasing and $B(0)>0$ (and this remark will enter in Subsection 3.5). If we want to extend $B$ to the whole real axis it
suffices to
set $B(-v)=2B(0)-B(v)$ for all $v\geq0$.

\medskip

From now on, we will always assume that the right-hand side $f$ is nonnegative.

\subsection{The extended problem and concentration comparison}

Let us address the comparison issue. Our goal here is to compare the solution $v$ to \eqref{eq.1} with the solution $V$ to the problem
\begin{equation}
\left\{
\begin{array}
[c]{lll}%
\left(  -\Delta\right)  ^{\sigma/2}V+\, B(V)=f^{\#}\left(  x\right)   &  & in\text{ }%
\Omega^{\#}\\[6pt]
V=0 &  & on\text{ }\partial\Omega^{\#}.
\end{array}
\right.  \label{eq.4}%
\end{equation}
A reasonable way to do that is to compare the solution $w$ to \eqref{eq.3} with the solution $\psi$ to the problem
\begin{equation}
\left\{
\begin{array}
[c]{lll}%
-\operatorname{div}_{x,y}\left(  y^{1-\sigma}\nabla \psi\right)  =0  &  & in\text{
}\mathcal{C}_{\Omega^{\#}}\\[6pt]
\psi=0 &  & on\text{ }\partial_{L}\mathcal{C}_{\Omega^{\#}}\\[6pt]
\displaystyle{-\frac{1}{\kappa_{\sigma}}\lim_{y\rightarrow0^{+}}y^{1-\sigma}\,\dfrac{\partial \psi}{\partial y}(x,y)}+\,B(\psi(x,0))=f^{\#}\left(
x\right)
&  & in\text{ }\Omega^{\#},
\end{array}
\right.  \label{eq.5}%
\end{equation}
where $\psi(x,0)=V(x)$.
According to \cite{BV}, using the change of variables
\[
z=\left(  \frac{y}{\sigma}\right)  ^{\sigma},
\]
problems \eqref{eq.3} and \eqref{eq.5} become respectively

\begin{equation}
\left\{
\begin{array}
[c]{lll}%
-z^{\nu}\dfrac{\partial^{2}w}{\partial z^{2}}-\Delta_{x}w=0 &  & in\text{
}\mathcal{C}_{\Omega}\\
&  & \\
w=0 &  & on\text{ }\partial_{L}\mathcal{C}_{\Omega}\\
&  & \\
-\dfrac{\partial w}{\partial z}\left(  x,0\right)  =%
\,\sigma^{\sigma-1}\kappa_{\sigma}\left(f\left(  x\right)-B(w(x,0))\right)  &  & in\text{ }\Omega,
\end{array}
\right.  \label{eq.23}%
\end{equation}
and
\begin{equation}
\left\{
\begin{array}
[c]{lll}%
-z^{\nu}\dfrac{\partial^{2}\psi}{\partial z^{2}}-\Delta_{x}\psi=0 &  & in\text{
}\mathcal{C}_{\Omega}^{\#}\\
&  & \\
\psi=0 &  & on\text{ }\partial_{L}\mathcal{C}_{\Omega}^{\#}\\
&  & \\
-\dfrac{\partial \psi}{\partial z}\left(  x,0\right)  =%
\,\sigma^{\sigma-1}\kappa_{\sigma}\left(f^{\#}\left(  x\right)-B(\psi(x,0))\right)  &  & in\text{ }\Omega^{\#}.
\end{array}
\right.  \label{eq.24}%
\end{equation}
where $\nu:=2\left(  \sigma-1\right)  /\sigma.$ \
Then, the problem reduces to prove the concentration comparison between the solutions $w(x,z)$ and $\psi(x,z)$ to \eqref{eq.23}-\eqref{eq.24}
respectively.
Following
\cite{BV}, using standard symmetrization tools (among which the differentiation formulas in Propositions \eqref{BANDLE}-\eqref{Ferone-Mercaldo} are
essential), if
we
introduce the
function
\begin{equation}
{Z}(s,z)=\int_{0}^{s}(w^{\ast}(\tau,z)-\psi^{\ast}(\tau,z))d\tau\,,
\end{equation}
then we get the inequality
\begin{equation}
-z^{\nu}{Z}_{zz}-p\left(  s\right)
{Z}_{ss}\leq0\label{symineq}
\end{equation}
for a.e. $(s,z)\in D:=\left(  0,|\Omega| \right)
\times\left(  0,+\infty\right)  $ . Obviously, we have
\begin{equation}
Z(0,y)={Z}_{s}(|\Omega|,y)=0\label{boundcond}.
\end{equation}
A crucial point in our arguments below is played by the derivative of $Z$ with respect to $z$. Due to the boundary conditions contained
in
\eqref{eq.23}-\eqref{eq.24}, we have
\begin{equation}\label{Z_yboundary.formula}
{Z}_{z}(s,0)\geq \theta_{\sigma}\int_{0}^{s} (B(w^*(\tau,0))-B(\psi^{\ast}(\tau,0))\, d\tau
\end{equation}
where
\[
\theta_{\sigma}:=\sigma^{\sigma-1}\kappa_{\sigma}.
\]
Now observe that the function $$Y(s,0)=\int_{0}^{s}B(w^*(\tau,0))-B(\psi^*(\tau,0))\, d\tau$$
has the same points of maximum or minimum and the same regions of monotonicity than ${Z}(s,0)$.


\subsection{Comparison result for concave $B$}

\begin{theorem}\label{thm.ell.concave} Let $v$ be the nonnegative solution of  problem $\eqref{eq.1}$ posed in a bounded domain with zero
Dirichlet
boundary
condition, nonnegative data $f\in L^1(\Omega)$ and nonlinearity $B(v)$ given by a concave function with $B(0)=0$ and $B'(v)>0$ for all $v>0$. If $V$
is the solution of the corresponding symmetrized problem, we have
\begin{equation}
v^\#(x)\prec V(x).
\end{equation}
The same is true if $\Omega=\ren$.
\end{theorem}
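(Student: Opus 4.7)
The strategy follows the extension technique introduced in \cite{BV}. First I lift both $v$ and $V$ to their extensions $w,\psi$ solving \eqref{eq.3} and \eqref{eq.5}; after the change of variable $z=(y/\sigma)^{\sigma}$ these become the degenerate elliptic problems \eqref{eq.23}--\eqref{eq.24}. I then define
$$Z(s,z)=\int_{0}^{s}\bigl(w^{*}(\tau,z)-\psi^{*}(\tau,z)\bigr)\,d\tau,$$
where $w^{*}(\cdot,z)$ and $\psi^{*}(\cdot,z)$ are the decreasing rearrangements in the $x$-variable at each fixed $z$. The assertion $v^{\#}\prec V$ reduces to the pointwise inequality $Z(s,0)\le 0$ for every $s\in[0,|\Omega|]$.

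The next step is to derive a symmetrization inequality on $Z$. Combining the first- and second-order differentiation formulas (Propositions \ref{BANDLE}--\ref{Ferone-Mercaldo}) with the isoperimetric estimate of Polya-Szego applied to the equations \eqref{eq.23}--\eqref{eq.24}, exactly as in \cite{BV}, I obtain in $D:=(0,|\Omega|)\times(0,+\infty)$
$$-z^{\nu}Z_{zz}-p(s)Z_{ss}\le 0,$$
together with the homogeneous data $Z(0,z)=0$, $Z_{s}(|\Omega|,z)=0$ and $Z(s,z)\to 0$ as $z\to\infty$. The genuinely new feature produced by the lower-order term $B(v)$ is the mixed-type boundary condition at $z=0$: using that $f$ and $f^{\#}$ are equi-distributed in the Neumann-type conditions of \eqref{eq.23}--\eqref{eq.24}, one finds
$$Z_{z}(s,0)\ge \theta_{\sigma}\,Y(s,0),\qquad Y(s,0):=\int_{0}^{s}\bigl[B(w^{*}(\tau,0))-B(\psi^{*}(\tau,0))\bigr]\,d\tau.$$

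To conclude I argue by contradiction. Assume $\sup_{\overline{D}}Z>0$. The maximum principle for the degenerate elliptic operator, together with the conditions on the remaining parts of the parabolic boundary and the decay at infinity, forces the positive supremum to be attained at some point $(s_{0},0)$ on the bottom face with $s_{0}\in(0,|\Omega|)$. Here $Z_{s}(s_{0},0)=0$, i.e.\ $w^{*}(s_{0},0)=\psi^{*}(s_{0},0)=:c$, while the Hopf boundary-point lemma, adapted to the degeneracy at $z=0$ through a barrier construction as in \cite{BV}, produces the strict inequality $Z_{z}(s_{0},0)<0$, whence $Y(s_{0},0)<0$. The main obstacle of the whole proof is now to deduce, from the concavity of $B$ and the structure at the maximum, the opposite sign $Y(s_{0},0)\ge 0$. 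This is delicate: concavity does not in general preserve concentration comparisons, as will be seen in Section \ref{sec.ell.count}, so the argument must exploit the specific configuration at the maximum, namely that both $w^{*}(\cdot,0)$ and $\psi^{*}(\cdot,0)$ are nonnegative decreasing functions on $[0,s_{0}]$ sharing the common value $c$ at $s_{0}$ and the accumulated inequality $\int_{0}^{s_{0}}(w^{*}-\psi^{*})\,d\tau>0$. A layer-cake decomposition above the threshold $c$ then allows the monotonicity of $B'$ (consequence of concavity) to be turned into the sign $Y(s_{0},0)\ge 0$. The borderline case $c=0$, in which this concavity estimate could fail, is removed by a regularization argument: one replaces the datum $f$ by $f+\varepsilon$, obtains strictly positive solutions (ensuring $c>0$ at the maximum), applies the inequality, and passes to the limit via the $L^{1}$-contraction of Theorem \ref{th.exist}. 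The resulting contradiction completes the proof, and the same scheme carries over to the case $\Omega=\R^{N}$ through the approximation by balls $B_{R}(0)$ described in Subsection 3.1.
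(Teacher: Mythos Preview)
Your overall scheme is exactly the paper's: extend to $w,\psi$, form $Z$, derive the degenerate elliptic inequality \eqref{symineq} with the boundary data \eqref{boundcond} and the Neumann-type estimate \eqref{Z_yboundary.formula}, and exclude a positive maximum on the bottom face $z=0$. The passage to $\Omega=\ren$ by exhaustion with balls is also as in the paper.

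The gap is precisely at the step you flag as ``the main obstacle'': you assert that a layer-cake decomposition above the threshold $c=w^{*}(s_{0},0)=\psi^{*}(s_{0},0)$, plus monotonicity of $B'$, yields $Y(s_{0},0)\ge 0$, but you give no computation. A layer-cake rewriting gives $Y(s_{0},0)=\int_{c}^{\infty}B'(t)\,[\mu_{w}(t)-\mu_{\psi}(t)]\,dt$, and the sign of $\mu_{w}-\mu_{\psi}$ is not controlled by the maximum condition in any obvious way, so this sketch does not close. The paper's argument is a direct two-line computation that bypasses the threshold $c$ altogether: from concavity one has $B(a)-B(b)\ge B'(a)(a-b)$, hence
\[
Y(s_{0},0)\ \ge\ \int_{0}^{s_{0}} B'\bigl(v^{*}(\tau)\bigr)\,Z_{s}(\tau,0)\,d\tau\ =\ \int_{0}^{s_{0}} g(\tau)\,Z_{s}(\tau,0)\,d\tau,
\]
where $g(\tau)=B'(v^{*}(\tau))$ is \emph{nondecreasing} (both $B'$ and $v^{*}$ are nonincreasing). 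Integration by parts then gives
\[
\int_{0}^{s_{0}} g(\tau)\,Z_{s}(\tau,0)\,d\tau\ =\ g(0)\,Z(s_{0},0)\ +\ \int_{0}^{s_{0}}\bigl[Z(s_{0},0)-Z(s,0)\bigr]\,dg(s)\ >\ 0,
\]
since $g(0)=B'(\|v\|_{\infty})>0$, $Z(s_{0},0)>0$, $dg\ge 0$, and $Z(s_{0},0)-Z(s,0)\ge 0$ by maximality. This already contradicts $Z_{z}(s_{0},0)\le 0$; you do not even need the strict Hopf inequality. In particular the value $c$ never enters, so your regularization $f\mapsto f+\varepsilon$ (designed to force $c>0$) is unnecessary; the only reduction the paper makes is to smooth, bounded, compactly supported $f$ so that $\|v\|_{\infty}<\infty$, and then passes to general $f\in L^{1}$ by the $L^{1}$-contraction.
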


\noindent {\sl Proof.} In this case we pose the problem first in a bounded domain $\Omega$ of $\ren$ with smooth boundary. We also assume that $f$ is smooth, bounded and compactly supported, since the comparison result for general data can be obtained later by approximation using the $L^1$ dependence of the map $f\mapsto B(v)$.

(i) We want to prove that  $Z(s,0)\le 0$ for all $s\in [0,|\Omega|]$. It is easy to prove that a positive maximum of $Z(s,z)$ cannot happen at
the
lateral
boundary $s=|\Omega|$ for $z>0$ by the stated boundary conditions and the Hopf's boundary principle.

In order to study the possible positive maximum at the line $z=0$ we proceed as follows.
The concavity of $B$ implies that for $a,b\ge 0$ we have $B(a)-B(b)\ge B'(a)(a-b)$. Using this and
\eqref{Z_yboundary.formula}, it follows that
\begin{align}
& Z_{z}(s,0)\geq \theta_{\sigma}Y(s,0)\nonumber
\geq
\theta_{\sigma}\int_{0}^{s}B^{\prime}(v^{\ast}(\tau))[w^{\ast}(\tau,0)-\psi^{\ast}(\tau,0)]d\tau\nonumber
\\&=\theta_{\sigma}\int_{0}^{s}B^{\prime}(v^{\ast}(\tau))Z_{s}(\tau,0)d\tau.\label{dery}
\end{align}
If we set $g(s):=B^{\prime}(v^{\ast}(s))$, since $B^{\prime}$ is decreasing,  we notice that $g$ is an increasing function bounded from
below by
$g(0)=B^{\prime}(\|v\|_{\infty})$. If the positive maximum of $Z(s,0)$ happens for $s=s_0$ then, using an integration by parts in \eqref{dery} we
can
write
\begin{align}
&Z_{z}(s_{0},0)\geq\theta_{\sigma}\int_{0}^{s_0}B^{\prime}(v^{\ast}(s))Z_{s}(s,0)ds\nonumber\\
&=\theta_{\sigma}\left[g(0)Z(s_{0},0)+\int_{0}^{s_0}[Z(s_{0},0)-Z(s,0)]dg(s)\right]>0\label{relmaxZY}
\end{align}
which is impossible because $Z_{z}(s_{0},0)\leq0$. Then $Z(s_0,0)\leq0$, that is $Z\leq0$, namely
\[
\int_{0}^{s}w^{\ast}(\tau,z)\,d\tau\leq\int_{0}^{s}\psi^{\ast}(\tau,z)\,d\tau.
\]

Another remark is that either $Z\equiv 0$ or
\begin{equation}
Z<0 \text{ in } (0,|\Omega|)\times[0,\infty)\label{negativity}:
\end{equation}
indeed, if $Z\not\equiv0$ for the previous arguments it cannot be $Z=0$ in some points of $(0,|\Omega|)\times(0,\infty)$ (otherwise it would reach
the
maximum in
this domain, hence it would be constantly 0 by the maximum principle). On the other hand, if $Z(s_{0},0)=0$ for some point $s_{0}\in
(0,|\Omega|)$, by
the Hopf
boundary maximum principle we have $Z_{z}(s_{0},0)<0$, but by \eqref{relmaxZY} we have $Z_{z}(s_{0},0)\geq0$.

\medskip

(ii) Here is a simpler proof in the important special case of the linear fractional diffusion,
i.\,e., when $B(v)=\,v$. Indeed, from \eqref{Z_yboundary.formula} we have the inequality
\[
Z_{z}(s,0)\geq\theta_{\sigma} Z(s,0).\nonumber\\
\]
Now we simply observe that \eqref{symineq} can be rewritten as
\[
-p(s)^{-1}Z_{zz}-z^{-\nu}
Z_{ss}\leq0
\]
therefore, multiplying both sides by $Z_{+}$ and integrating by parts over the strip $[0,|\Omega|]\times(0,+\infty)$, the boundary conditions
\eqref{boundcond}
and the fact that $Z(s,z)\rightarrow0$ as $z\rightarrow\infty$ imply
\begin{align*}
&\int_{0}^{|\Omega|}p(s)^{-1}Z_{z}(s,0)Z_{+}(s,0)ds+\int_{0}^{\infty}\int_{0}^{|\Omega|}z^{-\nu}|\left(Z_{+}\right)_{s}|^{2}ds\,dz\\
&+\int_{0}^{\infty}\int_{0}^{|\Omega|}
p(s)^{-1}|\left(Z_{+}\right)_{z}|^{2}ds\,dz\leq0
\end{align*}
namely
\[
\int_{0}^{\infty}\int_{0}^{|\Omega|}z^{-\nu}|\left(Z_{+}\right)_{s}|^{2}ds\,dz
+\int_{0}^{\infty}\int_{0}^{|\Omega|}
p(s)^{-1}|\left(Z_{+}\right)_{z}|^{2}ds\,dz\leq0.
\]
hence $Z_{+}\equiv0$.

\medskip

\noindent $\bullet$ {\sl Problem in the whole space.} The previous arguments still apply if the problem is posed in the whole space $\R^{N}$, namely
if
$v$
solves
\eqref{whole}. Indeed, in this case we may use the boundary condition $Z_{s}(s,y)\rightarrow0$ as $s\rightarrow\infty$. Alternatively, according
to
what
remarked
in Section 3, we may approximate the solution $w$ to the elliptic problem in the upper half-space \eqref{eq.6},
with nonnegative $f\in L^{1}(\ren)\cap L^{\infty}(\ren)$, with the family $w_R$ of solutions to problems of the type
\begin{equation}
\left\{
\begin{array}
[c]{lll}%
-\operatorname{div}_{x,y}\left(  y^{1-\sigma}\nabla w_{R}\right)  =0 &  & in\text{
}\mathcal{C}_{B_{R}},\\[6pt]
w_{R}=0 &  & on\text{ }\partial_{L}\mathcal{C}_{B_{R}},\\[6pt]
\displaystyle{-\frac{1}{\kappa_{\sigma}}\lim_{y\rightarrow0^{+}}y^{1-\sigma}\,\dfrac{\partial w_R}{\partial y}(x,y)}+\,B(w_{R}(x,0))=f_{R}\left(
x\right)
&
& in\text{ }B_{R},
\end{array}
\right.  \label{eq.7}%
\end{equation}
where $B_{R}$ is a ball of radius $R$ at the origin. According to Theorem \ref{thm.ell.concave}, we obtain
\begin{equation}
\int_{0}^{s}w_{R}^{\ast}(\tau,y)d\tau\leq\int_{0}^{s}\psi_{R}^{\ast}(\tau,y)d\tau\label{compball}
\end{equation}
for all $s\in[0,|B_{R}|]$ and $y\geq0$ where $\psi_{R}$ is the solution to
\begin{equation*}
\left\{
\begin{array}
[c]{lll}%
-\operatorname{div}_{x,y}\left(  y^{1-\sigma}\nabla \psi_{R}\right)  =0  &  & in\text{
}\mathcal{C}_{B_{R}}\\[6pt]
\psi_{R}=0 &  & on\text{ }\partial_{L}\mathcal{C}_{B_{R}}\\[6pt]
\displaystyle{-\frac{1}{\kappa_{\sigma}}\lim_{y\rightarrow0^{+}}y^{1-\sigma}\,\dfrac{\partial \psi_R}{\partial
y}(x,y)}+\,B(\psi_{R}(x,0))=f_{R}^{\#}\left(
x\right)   &  & in\text{ }B_{R}.
\end{array}
\right.  \label{eq.8}%
\end{equation*}
 Then we get (see Theorem 7.3 in \cite{pqrv2}) $w_{R}\rightarrow w$ and  $\psi_{R}\rightarrow \psi$ as $R\rightarrow\infty$, where $\psi$ solves
\begin{equation}
\left\{
\begin{array}
[c]{lll}%
-\operatorname{div}_{x,y}\left(  y^{1-\sigma}\nabla \psi\right)  =0 &  & in\text{
}\R^{N}\times(0,+\infty)\\[6pt]
\displaystyle{-\frac{1}{\kappa_{\sigma}}\lim_{y\rightarrow0^{+}}y^{1-\sigma}\,\dfrac{\partial \psi}{\partial y}(x,y)}+\,B(\psi(x,0))=f^{\#}\left(
x\right)
&  & x\in\R^{N}.
\end{array}
\right.  \label{eq.10}%
\end{equation}
Therefore, letting $R\rightarrow\infty$ in \eqref{compball} we find
\begin{equation*}
\int_{0}^{s}w^{\ast}(\tau,y)d\tau\leq\int_{0}^{s}\psi^{\ast}(\tau,y)d\tau\label{eq.11}
\end{equation*}
for all $s\geq0$ and $y\geq0$.

\medskip

\noindent {\bf Remark.}  We also wanted to prove that $Y(s,0)\le 0$, i.\,e.,
\begin{equation}
\int_{0}^{s}B(v^{\ast}(\tau))d\tau \leq\int_{0}^{s}B(V^{\ast}(\tau))d\tau.\label{comparison}
\end{equation}
but it did not work. See next section.


\subsection{Comparison of concentrations for radial problems}

This second result is a variation and extension of the previous comparison result. We consider the same assumptions on $B$ and  $\Omega$.

\begin{theorem}\label{thm.ell.concave.rad} Let $v_1, v_2$ be two nonnegative solutions of  problem $\eqref{eq.1}$ posed in a ball $B_{R}(0)$,
with
$R\in(0,+\infty]$ with zero Dirichlet boundary conditions if $R<+\infty$, nonnegative radially symmetric decreasing data $f_1, f_2\in L^1(B_{R}(0))$
and
nonlinearity
$B(v)$
given by a concave function for $v\ge0$, with $B(0)=0$ and $B'(v)>0$ for all $v>0$. Then $v_1$ and $v_2$ are rearranged, and
\begin{equation}
 f_1\prec f_2 \quad \mbox{implies} \quad v_1\prec v_2\,.
\end{equation}
\end{theorem}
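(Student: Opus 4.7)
The strategy is to adapt the proof of Theorem~\ref{thm.ell.concave}, replacing the comparison between a non-symmetric datum $f$ and its rearrangement $f^{\#}$ by a direct comparison of two already-rearranged data $f_1,f_2$ related by $f_1\prec f_2$. Because no symmetrization loss is needed anywhere, the extension/Talenti machinery in fact simplifies, and the only genuinely new point is the treatment of the Robin-type boundary condition at $z=0$.

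First I would work on a ball $B_R(0)$ with $R<\infty$, deferring the case $R=+\infty$ to a monotone approximation. Let $w_1,w_2\in X_0^{\sigma/2}(\mathcal{C}_{B_R})$ be the solutions of the extension problems of type \eqref{eq.23} with nonnegative data $f_1,f_2$, so that $v_i=w_i(\cdot,0)$. Because the data are radially symmetric and non-increasing and the extended problem commutes with rotations, uniqueness (Theorem~\ref{th.exist}) forces each $w_i(\cdot,z)$ to be itself radially symmetric and non-increasing; in particular $w_i=w_i^{\#}$ in $x$, and the decreasing rearrangement $w_i^{\ast}(s,z)$ is just the radial profile of $w_i$ written in the variable $s=\omega_N|x|^N$. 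Setting
\begin{equation*}
Z(s,z):=\int_0^s\bigl[w_1^{\ast}(\tau,z)-w_2^{\ast}(\tau,z)\bigr]\,d\tau,
\end{equation*}
the Talenti-type computation of \cite{BV} yields the differential inequality \eqref{symineq} for $Z$ on $(0,|B_R|)\times(0,\infty)$, together with $Z(0,z)=0$ and $Z_s(|B_R|,z)=0$.

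The key new step is the analysis of the trace $Z_z(\cdot,0)$. Integrating the Robin boundary condition of \eqref{eq.23} over the (radial) level set $\{|x|<(s/\omega_N)^{1/N}\}$ for each $w_i$ and subtracting, I obtain the exact identity
\begin{equation*}
Z_z(s,0)=\theta_{\sigma}\int_0^s\bigl[f_2^{\ast}(\tau)-f_1^{\ast}(\tau)\bigr]\,d\tau+\theta_{\sigma}\int_0^s\bigl[B(v_1^{\ast}(\tau))-B(v_2^{\ast}(\tau))\bigr]\,d\tau.
\end{equation*}
Since $f_1,f_2$ are rearranged, the hypothesis $f_1\prec f_2$ is equivalent to $\int_0^s f_1^{\ast}\le\int_0^s f_2^{\ast}$ for every $s>0$, so the first summand is nonnegative and
\begin{equation*}
Z_z(s,0)\ge\theta_{\sigma}\int_0^s\bigl[B(v_1^{\ast}(\tau))-B(v_2^{\ast}(\tau))\bigr]\,d\tau,
\end{equation*}
which is precisely \eqref{Z_yboundary.formula} with $(w,\psi)$ replaced by $(w_1,w_2)$. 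From this point I can import the concluding argument of Theorem~\ref{thm.ell.concave}(i) without change: Hopf's principle rules out a positive maximum of $Z$ in the interior or on $s=|B_R|$ for $z>0$, while at a putative positive maximum $(s_0,0)$ the concavity bound $B(a)-B(b)\ge B'(a)(a-b)$ combined with integration by parts against the non-decreasing function $g(s):=B'(v_1^{\ast}(s))$ produces $Z_z(s_0,0)>0$, contradicting $Z_z(s_0,0)\le 0$. Hence $Z\le 0$, i.e., $v_1\prec v_2$. Finally, the case $R=+\infty$ is handled by solving the problems on $B_R$ with truncated data $f_{i,R}=f_i\chi_{B_R}$ (which again satisfy $f_{1,R}\prec f_{2,R}$) and passing to the limit as in the end of Section~\ref{sect.ell}, using the monotone convergence $w_{i,R}\to w_i$.

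The principal obstacle I anticipate is the rigorous derivation of the level-set boundary identity for $Z_z(s,0)$, namely the justification that the radial integration of $-\kappa_{\sigma}^{-1}\lim_{y\to 0^+}y^{1-\sigma}\partial_y w_i$ over $\{|x|<(s/\omega_N)^{1/N}\}$ really equals $\int_0^s w_{i,z}^{\ast}(\tau,0)\,d\tau$; this requires coupling Proposition~\ref{BANDLE} with the radial symmetry and the boundary regularity of $w_i$ from Theorem~\ref{th.exist}(i). A secondary technical point is ensuring that the approximation $w_{i,R}\to w_i$ on $\mathbb{R}^N$ is strong enough to preserve the concentration inequality in the limit, which I would handle by invoking the monotone $L^1$-convergence scheme recalled in Theorem~\ref{th.exist}(iii).
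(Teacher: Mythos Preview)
Your proposal is correct and follows essentially the same route as the paper: define $Z^{1,2}(s,z)=\int_0^s(w_1^\ast-w_2^\ast)\,d\tau$, inherit the differential inequality \eqref{symineq} from the machinery of Theorem~\ref{thm.ell.concave}, and observe that the hypothesis $f_1\prec f_2$ makes the extra term $\theta_\sigma\int_0^s(f_2^\ast-f_1^\ast)\,d\tau$ in the boundary relation nonnegative, so that the inequality \eqref{Z_yboundary.formula} still holds and the Hopf/concavity argument goes through unchanged. Your treatment is slightly more explicit than the paper's (you spell out the radial symmetry of the $w_i$, the exact boundary identity, and the $R=+\infty$ approximation), but the substance is identical.
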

\noindent {\sl Proof.} As in the proof of Theorem \ref{thm.ell.concave}, we arrive at the inequality
\begin{equation}
-z^{\nu}Z_{zz}^{1,2}-p\left(  s\right)
Z_{ss}^{1,2}\leq0\label{symineq2}
\end{equation}
satisfied a.e. in the strip $(0,|B_{R}(0)|)\times(0,+\infty)$ by the function
\[
Z^{1,2}(s,z)=\int_{0}^{s}(w_{1}^{\ast}-w_{2}^{\ast})d\tau
\]
where $w_1$ and $w_2$ are the solutions of the extensions problems associated to $v_1$ and $v_{2}$ respectively. Concerning the boundary
conditions,
since we
have $f_1\prec f_2$ we get
\begin{align*}
&Z_{y}^{1,2}(s,0)\geq\theta_{\sigma}\int_{0}^{\tau} (B(w_{1}^*(\tau,0))-B(w_{2}^{\ast}(\tau,0))\,
d\tau+\int_{0}^{s}\left(f^{\ast}_{2}-f^{\ast}_{1}\right)d\tau\\
&\geq \int_{0}^{s} (B(w_{1}^*(\tau,0))-B(w_{2}^{\ast}(\tau,0))d\tau.
\end{align*}
Then we conclude as in the proof of Theorem \ref{thm.ell.concave}.

\subsection{Comparison results for convex $B$}

In the case of a convex nonlinearity we prove a stronger result in the whole space. For the sake of clarity, we  first prove the result when $B$
is a
\emph{superlinear} nonlinearity in the sense that is made precise next:

\begin{theorem}\label{thm.ell.convex}   Let $v$ be the nonnegative solution of  problem $\eqref{whole}$ posed in $\Omega=\ren$, nonnegative data
$f\in
L^1(\ren)$
and nonlinearity given by a convex function $B:\R_{+}\rightarrow\R_{+}$ which is smooth, and superlinear:  $B(v)\geq \varepsilon v$ for some
$\varepsilon>0$ and all $v\geq0$. If $V$ is the solution of the corresponding symmetrized problem, we have
\begin{equation}
v^\#\prec V, \qquad B(v^\#) \prec B(V).
\end{equation}
\end{theorem}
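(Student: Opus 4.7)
The strategy splits the theorem into two parts: (A) show the concentration comparison $v^\#\prec V$; (B) upgrade algebraically to $B(v^\#)\prec B(V)$.

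For (B), which is essentially immediate once (A) is known, I would apply Lemma \ref{lemma1} twice. From $v^\#\prec V$ (for the rearranged pair $v^\#,V$) we get $\int\Phi(v^\#)\le\int\Phi(V)$ for every convex nondecreasing $\Phi\colon[0,\infty)\to[0,\infty)$ with $\Phi(0)=0$. Since $B$ is convex, nondecreasing and $B(0)=0$, the composition $\Phi\circ B$ again lies in this admissible class, so $\int\Phi(B(v^\#))\le\int\Phi(B(V))$ for every admissible $\Phi$; the converse direction of Lemma \ref{lemma1} applied to the rearranged pair $B(v^\#),B(V)$ then yields $B(v^\#)\prec B(V)$.

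For (A), I would follow the setup of Theorems \ref{thm.ell.concave} and \ref{thm.ell.concave.rad}: approximate by Dirichlet problems on large balls, pass to the Caffarelli--Silvestre extensions $w,\psi$ associated to the data $f,f^\#$, perform the change of variables $z=(y/\sigma)^\sigma$, and work with
\[
Z(s,z)=\int_0^s\!\bigl(w^{\ast}(\tau,z)-\psi^{\ast}(\tau,z)\bigr)d\tau,\qquad Y(s,0)=\int_0^s\!\bigl(B(w^{\ast}(\tau,0))-B(\psi^{\ast}(\tau,0))\bigr)d\tau.
\]
The Steiner-symmetrization machinery of \cite{BV} delivers $-z^{\nu}Z_{zz}-p(s)Z_{ss}\le 0$ with $Z(0,z)=0$, decay as $z\to\infty$, and the Robin-type condition $Z_z(s,0)\ge\theta_\sigma Y(s,0)$. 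Convexity of $B$ provides the global subgradient inequality $B(a)-B(b)\ge B'(b)(a-b)$, valid for all $a,b\ge 0$, so with $b=V^{\ast}(\tau)$ one obtains
\[
Y(s,0)\ge\int_0^s B'(V^{\ast}(\tau))\,Z_s(\tau,0)\,d\tau;
\]
moreover the superlinearity $B(v)\ge\varepsilon v$, combined with convexity and $B(0)=0$, forces $B'(v)\ge\varepsilon$ pointwise. I would then run the energy argument of part (ii) of the proof of Theorem \ref{thm.ell.concave}: multiply the Steiner inequality by $p(s)^{-1}Z_+$, integrate over the strip, and use the boundary identity together with the above lower bound for $Y(s,0)Z_+(s,0)$ to produce a coercive term of the form $\varepsilon\int p^{-1}Z_+^2$.

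The main obstacle is that $\tilde g(\tau):=B'(V^{\ast}(\tau))$ is now \emph{decreasing} in $\tau$ (since $B'$ is increasing and $V^{\ast}$ decreasing), so the Stieltjes integration by parts that gave the contradiction at a positive boundary maximum in Theorem \ref{thm.ell.concave} no longer closes. In the energy version the same obstruction resurfaces as a cross term of the form $\int p(s)^{-1}Z_+(s,0)\int_0^s h(\tau)Z(\tau,0)\,d\tau\,ds$, with $h(\tau)=-\tilde g'(\tau)\ge 0$, whose sign is not manifest because $Z(\tau,0)$ is a priori unrestricted outside the support of $Z_+$. Overcoming this --- either by a bootstrap on the first interval supporting $Z_+$, by a Gronwall-type absorption into the coercive $\varepsilon$-term, or by leveraging the $L^1$-contraction from Theorem \ref{th.exist} to propagate sign information --- is the essential technical difficulty; once $Z_+\equiv 0$ is obtained, part (A) is proved and step (B) closes the theorem.
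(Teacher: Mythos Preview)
Your step (B) is fine and matches the paper's second argument. The setup for (A) is also correct. But you have explicitly left the main step open: you identify the obstacle (the weight $B'(V^{\ast})$ is decreasing, so neither the Hopf contradiction nor the energy method closes) and then list three possible workarounds without executing any of them. None of those three suggestions is how the paper actually resolves the difficulty, and it is not clear that any of them would succeed.

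The paper's argument differs from yours in three linked respects. First, it uses the \emph{other} direction of the convexity inequality, $B(a)-B(b)\le B'(a)(a-b)$ with $a=w^{\ast}(\tau,0)$, giving an \emph{upper} bound for $Y$ rather than the lower bound you wrote. Second, it does not try to obtain a contradiction locally at the boundary maximum $(s_{0},0)$. Instead, from Hopf one gets $Z_{z}(s_{0},0)<0$, hence $Y(s_{0},0)<0$; then one integrates by parts over $[s_{0},s]$ for $s>s_{0}$, using that $Z(\tau,0)-Z(s_{0},0)\le 0$ (since $s_{0}$ is the maximum) together with $B''>0$ and $v^{\ast}_{s}\le 0$, to obtain $Y(s,0)\le Y(s_{0},0)<0$ for \emph{all} $s>s_{0}$. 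Third, the contradiction is global: it comes from mass conservation $Y(\infty,0)=0$, proved separately (Proposition~\ref{prop.7}) by a cutoff argument in which the superlinearity $B(v)\ge\varepsilon v$ is exactly what makes $\int v\,(-\Delta)^{\sigma/2}\zeta_{R}\,dx\to 0$. So the role of superlinearity is not to produce a coercive term in an energy estimate, as you propose, but to guarantee equality of the total masses of $B(v)$ and $B(V)$, which then forces $s_{0}=0$.
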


\noindent {\bf Remark.} The simplest example of superlinear nonlinearity is of course the linear case, $B(v)=cv$. A nontrivial  example  from the
literature would be the remarkable nonlinearity $A(t)=\log(1+t)$ in the model of logarithmic diffusion, \cite{pqrv3}. Then, $B(s)=A^{-1}(s)=e^{s}-1$,
$s\geq0$. We will  relax the restriction $B(v)\geq \varepsilon v$ below by approximation.

\medskip

\noindent {\sl Proof.}
In order to prove that $Z(s,0)\le 0$ for all $s\in [0,\infty)$ we argue as follows. We have
$ B(w^*(\tau,0))-B(\psi^*(\tau,0)=B'(\xi)(w^*(\tau,0))-\psi^*(\tau,0))$, where $\xi $ is an intermediate value between $w^*(\tau,0)$
and
$\psi^*(\tau,0)$. Since $B$ is convex, $B'$ is an increasing real function and
$$
B(w^*(\tau,0))-B(\psi^*(\tau,0))\le B'(w^*(\tau,0)))(w^*(\tau,0))-\psi^*(\tau,0))
$$
Due to the maximum principle and the boundary conditions \eqref{boundcond}, unless $Z$ is constant, the maximum of $Z$ can be achieved either on
the
half-line
$\left\{(0,z):z\geq0\right\}$ or on the segment line $\left\{(s,0):s\in[0,\infty)\right\}$. Suppose this second circumstance occurs, and let $(s_{0},0)$ be a maximum point. Assume  $s_{0}>0$. We also have $Z_z(s_0,0)<0$ by Hopf's maximum principle, and by \eqref{Z_yboundary.formula}, this
leads to $Y(s_0,0)<0$. Then for $s>s_0$
$$
\begin{array}{l}
\displaystyle Y(s,0)-Y(s_0,0)=\int_{s_0}^s [B(v^*(\tau))- B(V^{\ast}(\tau))]\, d\tau\\
[6pt]
\displaystyle \le \int_{s_0}^s
B'(w^*(\tau,0))(v^*(\tau)-V^{\ast}(\tau))\, d\tau.
\end{array}$$
After integration by parts
$$
\begin{array}{l}
\displaystyle Y(s,0)-Y(s_0,0) \le \left[B'(v^*(\tau))(Z(\tau,0)-Z(s_0,0))\right]_{s_0}^{s} -\\
[6pt]
\displaystyle \int_{s_0}^s B''(v^*(\tau))v^*_s(\tau)(Z(\tau,0)-Z(s_0,0))d\tau.
\end{array}
$$
Since $Z$ has a maximum at $s_0$ and $B'$ is positive, the first term in the RHS is nonpositive. As for the second, we have: $B''>0$, $v^*_s<0$, and $Z(s,0)-Z(s_0,0)\le 0$, hence the last term is also nonpositive. We conclude that $Y(s,0)\le Y(s_0,0)<0$ for all $s>s_0$. This is a contradiction, because by the
conservation of mass property (see proposition \ref{prop.7} below) we have $Y(\infty,0)=0$. Then $s_{0}=0$ and $Z\leq0$.

(ii) Once we have $Z(s,z)\le 0$ we also want to prove that  $Y(s,0)\le 0$. We use the fact that $s=0$ is a point of maximum of $Z$ and  write
$$
Y(s,0) \le \left[B'(v^*(\tau))Z(\tau,0)\right]_{0}^s -
 \int_{0}^s B''(v^*(\tau))v^*_s(\tau)Z(\tau,0))\,d\tau\le 0.
$$
Also, we obtain the same result by using Lemma \ref{lemma1}, taking advantage of the convexity of $B$ and choosing any convex, increasing
function
$\Phi:[0,\infty)\rightarrow[0,\infty)$.
This ends the proof of the concentration comparison theorem in this case. \qed

\medskip

\noindent $\bullet$  The only remaining question here is then to prove that $\|B(w(x,0))\|_{L^1}=\|B(\psi(x,0))\|_{L^1}$.  Under the additional
assumption  $B(s)\ge \varepsilon s$ for all $s>0$, this will be a consequence of the
following mass conservation result for the solutions of the elliptic equation.

\begin{proposition}\label{prop.7} Let $v$ be the weak solution of $ (-\Delta)^{\sigma/2}v+  B(v)=f$ with  $f\in L^1(\ren)$ nonnegative and let
$u=B(v)$, with $B$ satisfying the
same assumptions as in Theorem {\rm \ref{thm.ell.convex}}. Then  we have
$$
\int_{\ren} u(x)\,dx=\int_{\ren} f(x)\,dx.
$$
\end{proposition}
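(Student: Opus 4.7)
The plan is to integrate the equation $(-\Delta)^{\sigma/2}v = f - B(v)$ against smooth cutoffs $\eta_R$ that approach the constant function $1$, and exploit the scaling property of the fractional Laplacian.

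First, a preliminary integrability observation: the superlinearity hypothesis $B(v) \ge \varepsilon v$ from Theorem~\ref{thm.ell.convex} combined with the contraction estimate $\|B(v)\|_{L^1(\ren)} \le \|f\|_{L^1(\ren)}$ (Theorem~\ref{th.exist}) immediately gives
$$\|v\|_{L^1(\ren)} \le \varepsilon^{-1}\|f\|_{L^1(\ren)}.$$
Hence $v$, $u=B(v)$, and $f$ all belong to $L^1(\ren)$, and the distributional identity $(-\Delta)^{\sigma/2}v = f - B(v)$ holds with right-hand side in $L^1(\ren)$.

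Next, fix a cutoff $\eta \in C_c^\infty(\ren)$ with $\eta \equiv 1$ on $B_1(0)$, $0 \le \eta \le 1$, and $\mathrm{supp}\,\eta \subset B_2(0)$, and set $\eta_R(x)=\eta(x/R)$. Since $\eta$ is Schwartz, $(-\Delta)^{\sigma/2}\eta$ is bounded on $\ren$ (and decays like $|x|^{-N-\sigma}$), so by the scaling rule $(-\Delta)^{\sigma/2}\eta_R(x) = R^{-\sigma}\bigl((-\Delta)^{\sigma/2}\eta\bigr)(x/R)$, we obtain
$$\|(-\Delta)^{\sigma/2}\eta_R\|_{L^\infty(\ren)} \le C\,R^{-\sigma}.$$
Testing the equation against $\eta_R$ by self-adjointness of $(-\Delta)^{\sigma/2}$ yields
$$\int_{\ren} v(x)\,(-\Delta)^{\sigma/2}\eta_R(x)\,dx + \int_{\ren} B(v)\eta_R\,dx = \int_{\ren} f\eta_R\,dx.$$
As $R\to\infty$, the first integral is bounded in absolute value by $\|v\|_{L^1}\cdot CR^{-\sigma}\to 0$, while the other two converge to $\int B(v)$ and $\int f$ respectively by dominated convergence (using $\eta_R \to 1$ pointwise and $B(v), f\in L^1$). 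The conclusion $\int u\,dx = \int f\,dx$ follows.

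The main obstacle is the rigorous justification of the pairing identity above in the framework of the paper, where the weak solution is defined through the Caffarelli-Silvestre extension $w$ rather than through the direct singular-integral definition of $(-\Delta)^{\sigma/2}$. Two routes are available: either (i) invoke the standard fact that a weak solution of the extension problem \eqref{eq.6} is automatically a distributional solution of $(-\Delta)^{\sigma/2}v + B(v) = f$ on $\ren$, so that the pairing with $\eta_R \in C_c^\infty(\ren)$ is legitimate; or (ii) work directly in the extension by testing \eqref{weakfor} against a product cutoff $\eta_R(x)\phi_Y(y)$, passing first $R\to\infty$ using the finite weighted Dirichlet energy $\int y^{1-\sigma}|\nabla_x w|^2\,dx\,dy < \infty$, and then $Y\to\infty$ with $\phi_Y$ chosen so that $\int y^{1-\sigma}|\phi_Y'(y)|^2\,dy \to 0$. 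Either route completes the proof.
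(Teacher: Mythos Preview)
Your proof is correct and follows essentially the same approach as the paper: test the equation against a rescaled cutoff $\eta_R$, use the scaling estimate $\|(-\Delta)^{\sigma/2}\eta_R\|_{L^\infty}\le C R^{-\sigma}$, and invoke the superlinearity $B(v)\ge\varepsilon v$ to control $\|v\|_{L^1}$ by $\|u\|_{L^1}$ so that the fractional term vanishes as $R\to\infty$. The paper's argument is more terse and does not address the justification of the pairing identity that you discuss in your final paragraph; your version is, if anything, more carefully argued.
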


\noindent {\sl Proof.}  Using a nonnegative nonincreasing cutoff function $\zeta(s)$ such that $\zeta(s)=1$ for $0\leq s\leq 1$ and $\zeta(s)=0$
for $s\geq2$, we rescale such function to $\zeta_R(x)=\zeta(|x|/R)$\nc. Then we have
\begin{equation}
\displaystyle \int_{\ren} f(x)\,\zeta_R(x)\,dx-\int_{\ren} u(x)\,\zeta_R(x)\,dx=\int_{\ren} v\,((-\Delta)^{\sigma/2}\zeta_R)\,dx\label{conservation}
\end{equation}
Due to the superlinearity assumption, if $u(x)=B(v(x))$ we get
$$
\left|\int_{\ren} v\,(-\Delta)^{\sigma/2}\zeta_R\,dx\right| \le \frac{1}{\varepsilon}\int_{\ren } |u(x)(-\Delta)^{\sigma/2}\zeta_R|\,dx \le
\frac{c}{R^{\sigma}} \int_{\ren} |u(x)|\,dx
$$
which in the limit $R\to \infty$ tends to zero. \qed

\noindent $\bullet$ The property of mass conservation for solutions in the whole space is also true for some convex $B$ that are not superlinear,
like
$B(v)=v^p$ with some $p>1$ but near 1, but it is
not true for all $p>1$. However, the comparison result we are looking for (which will extend Theorem \ref{thm.ell.convex} for such kind of
nonlinearity) will be true and the proof
proceeds by an approximation process, approximating $B(v)$ by $B_\ve(v)=B(v)+\ve v$, solving the approximate problem, deriving the comparison result
and passing to the limit. The details are as follows.


\begin{proposition}\label{prop.8}
Suppose $f\in L^{1}(\ren)$ and $B:\R_{+}\rightarrow\R_{+}$ is smooth, convex, $B(0)=0$ and $B^{\prime}(v)>0$ for all $v>0$. Let $v_{\ve}$ be the
solution of
problem \eqref{whole}, with nonlinearity given by $B_{\ve}(v)=B(v)+\ve v$. Then $v_{\ve}\rightarrow v$ as $\ve\rightarrow0$ pointwise and in
$L^{1}(\ren)$.
\end{proposition}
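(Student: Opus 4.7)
The strategy is to exploit monotonicity of the family $\{v_\varepsilon\}$ in $\varepsilon$ together with the uniform $L^1$ bound coming from Theorem~\ref{th.exist}, and then to pass to the limit in the weak formulation of the extended problem, identifying the limit as the unique solution $v$ of \eqref{whole}.

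First I would note that $B_{\varepsilon_1}(s)\le B_{\varepsilon_2}(s)$ whenever $0<\varepsilon_1\le\varepsilon_2$ and $s\ge 0$, so a standard comparison consequence of the $L^1$ contraction \eqref{contraction} (equivalently, a sub/supersolution argument for the extended problem) yields $v_{\varepsilon_1}\ge v_{\varepsilon_2}\ge 0$ pointwise. Hence as $\varepsilon\downarrow 0$, $v_\varepsilon(x)$ increases pointwise to a limit $\bar v(x)\in[0,+\infty]$. Taking $\widetilde f=0$ in \eqref{contraction} applied to $B_\varepsilon$ gives $\|B_\varepsilon(v_\varepsilon)\|_{L^1}\le\|f\|_{L^1}$, and in particular $\|B(v_\varepsilon)\|_{L^1}\le\|f\|_{L^1}$ uniformly in $\varepsilon$. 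Since $B$ is strictly increasing, $B(v_\varepsilon)$ increases pointwise, and the monotone convergence theorem gives $B(v_\varepsilon)\to B(\bar v)$ both pointwise and in $L^1(\ren)$, with $\|B(\bar v)\|_{L^1}\le\|f\|_{L^1}$. Because $B$ is a continuous strictly increasing bijection of $\R_+$ onto its range, finiteness of $B(\bar v)$ a.e.\ forces finiteness of $\bar v$ a.e., so $v_\varepsilon\to\bar v$ pointwise a.e.

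Next I would identify $\bar v$ with the solution $v$ of \eqref{whole}. By Theorem~\ref{th.exist}(iii) it suffices to treat $f\in L^1\cap L^\infty$, the general $L^1$ case following by a diagonal argument using the contraction \eqref{contraction} for both nonlinearities $B_\varepsilon$ and $B$. Testing the weak formulation \eqref{weakfor} of the extended problem for $w_\varepsilon$ against $\varphi\in C^1_c(\overline{\mathcal{C}_{\ren}})$ vanishing on $\partial_L\mathcal{C}_{\ren}$ yields
\begin{equation*}
\int_{\mathcal{C}_{\ren}}y^{1-\sigma}\nabla w_\varepsilon\cdot\nabla\varphi\,dx\,dy+\int_{\ren}B(v_\varepsilon)\varphi(x,0)\,dx+\varepsilon\int_{\ren}v_\varepsilon\,\varphi(x,0)\,dx=\kappa_\sigma\int_{\ren}f\,\varphi(x,0)\,dx.
\end{equation*}
The second integral converges to $\int_{\ren}B(\bar v)\varphi(x,0)\,dx$ by the $L^1$ convergence just established. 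For the $\varepsilon$-term, the estimate $\varepsilon v_\varepsilon\le B_\varepsilon(v_\varepsilon)\le\|f\|_\infty$ (from Theorem~\ref{th.exist}(ii)) together with the pointwise convergence $\varepsilon v_\varepsilon(x)\to 0$ (since $\bar v(x)<\infty$ a.e.) gives vanishing by dominated convergence on the compact support of $\varphi(\cdot,0)$.

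The main obstacle is the gradient term: I need a uniform bound for $w_\varepsilon$ in $X^{\sigma/2}(\mathcal{C}_{\ren})$ and weak-limit identification. This is obtained from the minimization characterization used in the proof of Theorem~\ref{th.exist}: $\mathcal{J}_\varepsilon(w_\varepsilon)\le\mathcal{J}_\varepsilon(0)=0$, so the Dirichlet energy of $w_\varepsilon$ is controlled by $\int f\,v_\varepsilon$, which in turn is bounded via the $L^\infty$ estimate on $v_\varepsilon$ on the support of $f$ and the decay at infinity. Thus up to a subsequence $w_\varepsilon\rightharpoonup\bar w$ in $X^{\sigma/2}(\mathcal{C}_{\ren})$, whose trace is $\bar v$; passing to the limit in the weak formulation, $\bar w$ solves the extended problem \eqref{eq.6} for the nonlinearity $B$. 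By the uniqueness statement in Theorem~\ref{th.exist}, $\bar w=w$ and hence $\bar v=v$. Convergence of the full family (not just a subsequence) follows from uniqueness of the limit, and $L^1$ convergence of $v_\varepsilon$ itself is recovered by composing with $A=B^{-1}$ on the $L^1$-convergent sequence $B(v_\varepsilon)$, using continuity of $A$ and the monotone structure.
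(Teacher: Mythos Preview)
Your argument is correct but takes a different route from the paper. The paper does not use monotonicity in $\varepsilon$; instead it works first on a bounded domain $\Omega$, where the \emph{compact} trace embedding $X_0^{\sigma/2}(\mathcal C_\Omega)\hookrightarrow L^q(\Omega)$ gives strong convergence of $w_\varepsilon(\cdot,0)$ along a subsequence and hence a direct passage to the limit in \eqref{weakforve}; the whole-space result is then obtained by a double limit through the solutions $v_\varepsilon^R,\,v^R$ on expanding balls $B_R(0)$. Your monotonicity observation (justified since $v_{\varepsilon_2}$ solves the $B_{\varepsilon_1}$-equation with right-hand side $f-(\varepsilon_2-\varepsilon_1)v_{\varepsilon_2}\le f$, so \eqref{contraction} gives $v_{\varepsilon_2}\le v_{\varepsilon_1}$) is a cleaner way to produce the pointwise limit $\bar v$ and the $L^1$ convergence of $B(v_\varepsilon)$ directly on $\ren$, bypassing the domain exhaustion. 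The price is that on $\ren$ the trace embedding is not compact, so identifying $\mathrm{Tr}(\bar w)=\bar v$ needs one extra line: the uniform bound $v_\varepsilon\le A(\|f\|_\infty)$ gives $v_\varepsilon\to\bar v$ in $L^1_{\mathrm{loc}}$, which matches the weak trace limit. Your final sentence recovering $\|v_\varepsilon-v\|_{L^1}\to 0$ by ``composing with $A$'' is imprecise---composition with a merely continuous map does not preserve $L^1$ convergence---but monotone convergence applied to $v_\varepsilon\uparrow v$ does the job, and the paper is equally terse at this closing step.
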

\begin{proof}
We first prove the result on a bounded domain $\Omega$. Suppose that $f\in L^{\infty}(\Omega)$, let $v_{\ve}$ be the solution to \eqref{eq.1},
and let
$w_{\ve}$
be its $\alpha-$ harmonic extension to the cylinder $\mathcal{C}_{\Omega}$. We have that $w_{\ve}$ is obtained as minimizer of the functional
\[
\mathcal{J}_{\ve}(w)=\frac{1}{2\kappa_{\sigma}}\int_{\mathcal{C}_{\Omega}}y^{1-\sigma}\,|\nabla
w|^{2}dx\,dy+\int_{\Omega}G_{\ve}(|w(x,0)|)dx-\int_{\Omega}f\,w(x,0)dx
\]
with
\[
G_{\ve}(t)=\int_{0}^{t}\left[B(\xi)+\ve\xi\right]d\xi
\]
over the space $X_{0}^{\sigma/2}(\mathcal{C}_{\Omega})$.  Moreover the trace $v_{\varepsilon}$ over $\Omega$ of $w_{\varepsilon}$ is bounded and
\begin{equation}
\|w_{\varepsilon}(\cdot, 0)\|_{L^{\infty}(\Omega)}\leq A(\|f\|_{L^{\infty}(\Omega)}).\label{limitw}
\end{equation}
Taking $w_{\ve}$ as a test function in the weak formulation of problem \eqref{eq.3},
namely
in the
formula
\begin{equation}
\int_{\mathcal{C}_{\Omega}}y^{1-\sigma}\,\nabla w_{\ve}\cdot \nabla\varphi\,dx\,dy+\int_{\Omega} B(w_{\ve}(x,0))
\,\varphi(x,0)dx+\ve\int_{\Omega}w_{\ve}(x,0)
\,\varphi(x,0)dx=\kappa_{\sigma}\int_{\Omega}f(x)\,\varphi(x,0)dx\label{weakforve}
\end{equation}
for $\varphi\in X_{0}^{\sigma/2}(\mathcal{C}_{\Omega})$, the Young and trace inequalities imply that $\left\{w_{\ve}\right\}$ is bounded in
$X_{0}^{\sigma/2}(\mathcal{C}_{\Omega})$. Then we can extract a subsequence $\left\{w_{\ve}\right\}$ (we used the same labeling for simplicity) such
that
\[
w_{\ve}\rightharpoonup w\quad \text{weak\,in }\,X_{0}^{\sigma/2}(\mathcal{C}_{\Omega}).
\]
Then the compactness of the trace embedding inequality gives
\[
w_{\ve}(\cdot,0)\rightarrow w(\cdot,0)\quad\text{as}\,\ve\rightarrow0\,\text{ strong in }L^{q}(\Omega)\,\,\,\forall q\in[1,2N/(N-\sigma)).
\]
Using \eqref{limitw}, Lebesgue's dominated convergence implies
\[
\int_{\Omega}B(w_{\ve}(\cdot,0))\varphi(\cdot,0)\,dx\rightarrow \int_{\Omega}B(w(\cdot,0))\varphi(\cdot,0)\,dx
\]
for all $\varphi\in X_{0}^{\sigma/2}(\mathcal{C}_{\Omega})$. This is enough to pass to the limit in \eqref{weakforve} and obtain
that
$w$ is the
weak solution to \eqref{eq.3}, that is $v=w(\cdot,0)$ solves \eqref{eq.1}

If $f\in L^{\infty}(\ren)$, let $v$ be the solution to \eqref{whole}. We know that the solution $v_{\ve}$ to \eqref{whole} with nonlinearity
$B_{\ve}$
is the
trace on $\ren$ of the solution $w_{\ve}$ to the problem \eqref{eq.6}, with the nonlinearity $B_{\ve}$. By the arguments we recalled in
Subsection 3.1,
we have
that the sequence of solutions $\left\{w_{\ve}^{R}\right\}_{R>0}$ to problem \eqref{eq.3}, defined on the cylinder $\mathcal{C}_{B_{R}(0)}$,
with
nonlinearity
$B_{\ve}$ and data $f_{R}=f\chi_{B_{R}(0)}$, converges pointwise to $w_{\ve}$ as $R\rightarrow\infty$, that is
\begin{equation}
w_{\ve}^{R}\rightarrow w_{\ve}\quad\text{pointwise as }R\rightarrow\infty.\label{conve1}
\end{equation}
Moreover, if $v_{\ve}^{R}=w_{\ve}^{R}(\cdot,0)$, the arguments explained above show that
\begin{equation}
v_{\ve}^{R}\rightarrow v^{R}\quad\text{as  }\ve\rightarrow0 \text{  strong in }L^{q}(B_{R}(0))\,\,\forall q\in[1,2N/(N-\sigma))\label{conve2}
\end{equation}
where $v^{R}$ is the solution to the Dirichlet problem \eqref{eq.1}, posed on the ball $B_{R}(0)$. In addition, we have
\begin{equation}
v^{R}\rightarrow v\quad\text{pointwise as } R\rightarrow\infty.\label{conve3}
\end{equation}
Then \eqref{conve1},\eqref{conve2},\eqref{conve3} implies that $v_{\ve}$ converges to $v$ pointwise and in $L^{1}(\ren)$.
\end{proof}

Then we are able to prove the comparison result of Theorem \ref{thm.ell.convex} also for power nonlinearities like $B(v)=v^p$ for large $p>1$ (which
means that we can include the fast diffusion
range when we pass to the parabolic setting in Section \ref{sec.par})
\rm

\begin{theorem}\label{genconvex}
Let $v$ be the nonnegative solution of  problem $\eqref{whole}$, nonnegative data $f\in L^1(\ren)$
and the nonlinearity given by a convex function $B:\R_{+}\rightarrow\R_{+}$, with $B(0)=0$ and $B^{\prime}(v)>0$ for all $v>0$. If $V$ is the
solution
of the corresponding symmetrized problem, the conclusion of Theorem {\rm \ref{thm.ell.convex}} still holds.
\end{theorem}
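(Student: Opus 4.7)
The strategy is to approximate the nonlinearity $B$ by genuinely superlinear convex functions to which Theorem \ref{thm.ell.convex} applies directly, and then to pass to the limit. Concretely, for each $\varepsilon>0$ I set $B_{\varepsilon}(v):=B(v)+\varepsilon v$; this remains smooth, convex, increasing, and satisfies $B_{\varepsilon}(0)=0$ and the superlinearity bound $B_{\varepsilon}(v)\ge \varepsilon v$. Let $v_{\varepsilon}$ and $V_{\varepsilon}$ denote the nonnegative solutions of \eqref{whole} with nonlinearity $B_{\varepsilon}$ and data $f$ and $f^{\#}$, respectively. Theorem \ref{thm.ell.convex} then yields, for every $\varepsilon>0$,
\begin{equation}
v_{\varepsilon}^{\#}\prec V_{\varepsilon}, \qquad B_{\varepsilon}(v_{\varepsilon}^{\#})\prec B_{\varepsilon}(V_{\varepsilon}).
\end{equation}

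The first concentration inequality passes to the limit cleanly. By Proposition \ref{prop.8}, applied with both data $f$ and $f^{\#}$, we have $v_{\varepsilon}\to v$ and $V_{\varepsilon}\to V$ pointwise a.e. and in $L^{1}(\ren)$ as $\varepsilon\to 0$. Since rearrangement is a contraction on $L^{1}$, this gives $v_{\varepsilon}^{\#}\to v^{\#}$ in $L^{1}$, and passing to the limit in $\int_{B_{R}}v_{\varepsilon}^{\#}\,dx\le \int_{B_{R}}V_{\varepsilon}\,dx$ yields $v^{\#}\prec V$.

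For the second, more delicate concentration inequality, I expand
\begin{equation}
\int_{B_{R}}B(v_{\varepsilon}^{\#})\,dx+\varepsilon\int_{B_{R}}v_{\varepsilon}^{\#}\,dx\le \int_{B_{R}}B(V_{\varepsilon})\,dx+\varepsilon\int_{B_{R}}V_{\varepsilon}\,dx.
\end{equation}
The two $\varepsilon$-terms vanish in the limit thanks to the uniform $L^{1}$ bounds of Theorem \ref{th.exist}. For the nonlinear terms I exploit monotonicity in the parameter: a standard $L^{1}$-comparison argument for \eqref{whole} shows that $\varepsilon\mapsto v_{\varepsilon}$ is nonincreasing (the extra dissipative term $\varepsilon v$ pushes the solution down), so $v_{\varepsilon}\uparrow v$ as $\varepsilon\downarrow 0$. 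The level-set characterization of $f^{\#}$ then propagates monotonicity to rearrangements, giving $v_{\varepsilon}^{\#}\uparrow v^{\#}$, and since $B$ is continuous and increasing, $B(v_{\varepsilon}^{\#})\uparrow B(v^{\#})$. The monotone convergence theorem yields $\int_{B_{R}}B(v_{\varepsilon}^{\#})\,dx\to\int_{B_{R}}B(v^{\#})\,dx$, and the analogous argument applies on the right-hand side. Taking the limit we obtain $B(v^{\#})\prec B(V)$.

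The main obstacle, and the reason Theorem \ref{thm.ell.convex} could not be applied directly, is the failure of mass conservation (Proposition \ref{prop.7}) for general convex $B$ without a linear lower bound: the proof of that proposition used superlinearity crucially to bound $|\int v\,(-\Delta)^{\sigma/2}\zeta_{R}|$ by $\|u\|_{1}/R^{\sigma}$. Here we replace mass conservation by pointwise monotone convergence along the regularized chain, which suffices to pass to the limit in the concentration inequality even when some mass may be lost in the limiting process.
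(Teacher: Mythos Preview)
Your approach coincides with the paper's: perturb $B$ to $B_\varepsilon(v)=B(v)+\varepsilon v$, apply Theorem~\ref{thm.ell.convex} to the superlinear problem, and pass to the limit using Proposition~\ref{prop.8}. For the first inequality $v^\#\prec V$ your argument and the paper's are identical.

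For the second inequality the paper takes a shorter path than you do. Once $v^\#\prec V$ is established for the limit solutions, the relation $B(v^\#)\prec B(V)$ follows \emph{directly} from Lemma~\ref{lemma1} applied on each ball $B_R$ with $\Phi=B$, since $B$ is convex, nondecreasing, and $B(0)=0$; no further limit procedure is required. Your route via the monotonicity $v_\varepsilon\uparrow v$ (which indeed follows from the $L^1$ comparison principle, since $v_{\varepsilon_2}$ is a subsolution for the $B_{\varepsilon_1}$ problem when $\varepsilon_1<\varepsilon_2$) and monotone convergence is correct, but it duplicates work: you are passing to the limit in an inequality that is already a consequence of the one you have just proved. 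The paper's argument also clarifies why mass conservation is no longer an obstacle at this stage: it was only needed inside the proof of Theorem~\ref{thm.ell.convex} to rule out a hypothetical interior maximum of $Z$, and that proof is applied at the $\varepsilon$-level where Proposition~\ref{prop.7} holds.
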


\begin{proof}
By virtue of Theorem \ref{thm.ell.convex} we have
\begin{equation}
v_{\ve}^\#\prec V_{\ve}, \qquad B(v_{\ve}^\#) \prec B(V_{\ve})\label{compeps}
\end{equation}
for all $\ve>0$, being $v_{\ve}$, $V_{\ve}$ the solution of problem \eqref{whole} and its symmetrized with the nonlinearity $B_{\ve}$. By
Proposition \ref{prop.8} we have that for all $s>0$
\[
\int_{0}^{s}v_{\ve}^{\ast}\,d\tau\rightarrow\int_{0}^{s}v^{\ast}\,d\tau,\quad\int_{0}^{s}V_{\ve}^{\ast}\,d\tau\rightarrow\int_{0}^{s}V^{\ast}\,d\tau
\]
as $\ve\rightarrow0$. Passing to the limit in \eqref{compball} we find the desired result.
\end{proof}

\normalcolor

\subsection{Second comparison result for convex $B$} Here is the second result, about comparison of concentrations for radial problems.
We leave the proof to the reader.

\begin{theorem}\label{thm.ell.convex2}  Let $v_1, v_2$ be two nonnegative solutions of  problem $\eqref{eq.1}$ posed in  $\Omega=\ren$, with
nonnegative
radially
symmetric decreasing data $f_1, f_2\in L^1(\Omega)$ and nonlinearity $B(v)$ given by a convex function with $B(0)=0$ and $B'(v)>0$ for all $v>0$.
Then,
$v_1$
and
$v_2$ are rearranged, and for $f_1\prec f_2$ we have
\begin{equation}
v_1(x)\prec v_2(x), \qquad B(v_1)\prec B(v_2)\,.
\end{equation}
\end{theorem}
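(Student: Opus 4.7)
The plan is to combine the extension-based argument of Theorem~\ref{thm.ell.convex} with the radial comparison setup of Theorem~\ref{thm.ell.concave.rad}. Since $f_1,f_2$ are radially symmetric decreasing on $\ren$, the corresponding extended solutions $w_1,w_2$ are radially decreasing in $x$, so they coincide with their own rearrangements in the variable $s=\omega_N|x|^N$. Introducing
\[
Z(s,z)=\int_0^s[w_1^\ast(\tau,z)-w_2^\ast(\tau,z)]\,d\tau,\quad
Y(s)=\int_0^s[B(w_1^\ast)-B(w_2^\ast)]\,d\tau,\quad
I(s)=\int_0^s(f_2^\ast-f_1^\ast)\,d\tau,
\]
the symmetrization machinery of the previous theorems yields, after the change of variable $z=(y/\sigma)^\sigma$, the differential inequality $-z^\nu Z_{zz}-p(s)Z_{ss}\le 0$ on $(0,\infty)\times(0,\infty)$, together with $Z(0,z)=0$ and the boundary condition $Z_z(s,0)=\theta_\sigma[Y(s)+I(s)]$; the sign $I\ge 0$ is forced by the hypothesis $f_1\prec f_2$.

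First I would reduce to the superlinear case $B(v)\ge\varepsilon v$ by replacing $B$ with $B_\varepsilon=B+\varepsilon v$, sending $\varepsilon\to 0^+$ at the end via Proposition~\ref{prop.8}, exactly as in the proof of Theorem~\ref{genconvex}. Under superlinearity, the mass-conservation identity of Proposition~\ref{prop.7} yields $Y(\infty)=M_1-M_2=-I(\infty)$. To prove $Z\le 0$ (equivalently $v_1\prec v_2$), I argue by contradiction: a positive maximum of $Z$ can only be attained at a point $(s_0,0)$ with $s_0\in(0,\infty)$, and then Hopf's boundary lemma gives $Z_z(s_0,0)<0$, so $Y(s_0)<-I(s_0)\le 0$. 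The convex tangent inequality $B(a)-B(b)\le B'(a)(a-b)$, combined with integration by parts against the nonpositive function $Z(\cdot,0)-Z(s_0,0)$---verbatim as in Theorem~\ref{thm.ell.convex}---gives $Y(s)\le Y(s_0)$ for every $s\ge s_0$. Letting $s\to\infty$ and using mass conservation yields $-I(\infty)\le Y(s_0)<-I(s_0)$.

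Once $Z\le 0$ is established, the second comparison $B(v_1)\prec B(v_2)$ follows from Lemma~\ref{lemma1} applied on every ball $B_R\subset\ren$: both $v_1,v_2$ are rearranged on $B_R$, the restriction of the concentration order $v_1\prec v_2$ to $B_R$ holds automatically, and $B$ is convex nondecreasing with $B(0)=0$, so $\int_{B_R}B(v_1)\le\int_{B_R}B(v_2)$ for every $R>0$, which is precisely $B(v_1)\prec B(v_2)$. The auxiliary superlinearity is then removed by passing $\varepsilon\to 0$ as in Theorem~\ref{genconvex}.

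The hard part will be closing the contradiction in the maximum-principle step: in Theorem~\ref{thm.ell.convex} the term $I$ was absent and $Y(\infty)=0$ clashed immediately with $Y(s_0)<0$, whereas here $Y(\infty)=-I(\infty)$ can be strictly negative, so the inequality $I(\infty)>I(s_0)$ obtained above is not by itself absurd. I anticipate closing this gap either by choosing $s_0$ as the rightmost point of the positive maximum set of $Z(\cdot,0)$ and exploiting additional structural information (the continuity of $Y$ and the sign of $I'=f_2^\ast-f_1^\ast$), or by replacing the pointwise maximum-principle argument with an energy estimate in the spirit of part~(ii) of the proof of Theorem~\ref{thm.ell.concave}: multiplying $-z^\nu Z_{zz}-p(s)Z_{ss}\le 0$ by $Z_+$, integrating over the strip, and using the uniform bound $B_\varepsilon'\ge\varepsilon$ to dominate the extra boundary contribution produced by $I$.
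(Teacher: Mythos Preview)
The paper does not prove this theorem: it simply says ``We leave the proof to the reader,'' so there is nothing to compare against. Your plan---running the maximum-principle argument of Theorem~\ref{thm.ell.convex} on the difference $Z$ built from the two radial solutions, with the extra nonnegative boundary term $I(s)=\int_0^s(f_2^\ast-f_1^\ast)$ coming from $f_1\prec f_2$, and then reading off $B(v_1)\prec B(v_2)$ via Lemma~\ref{lemma1}---is exactly the intended route, and the reduction to the superlinear case $B_\varepsilon=B+\varepsilon\,\mathrm{id}$ via Proposition~\ref{prop.8} is the right way to handle general convex $B$.

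You have, however, put your finger on a genuine gap rather than a cosmetic one. In Theorem~\ref{thm.ell.convex} the contradiction closed because $Y(\infty)=0$; here mass conservation only gives $Y(\infty)=\|f_1\|_1-\|f_2\|_1=-I(\infty)$, and your chain $-I(\infty)\le Y(s_0)<-I(s_0)$ yields $I(\infty)>I(s_0)$, which is not absurd. Neither of your proposed fixes is likely to work as stated: the ``rightmost maximum'' idea does not obviously produce new information, and the energy argument of part~(ii) of Theorem~\ref{thm.ell.concave} needs $\int p^{-1}Y\,Z_+\ge 0$, which for nonlinear convex $B$ you cannot get from $B'\ge\varepsilon$ alone (that lower bound controls $|Y'|$ versus $|Z_s|$, not the sign of $Y$ on $\{Z>0\}$).

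The clean way to close the gap is to \emph{reduce to equal masses}. If $\|f_1\|_1=\|f_2\|_1$ then $Y(\infty)=0$ and your argument is verbatim that of Theorem~\ref{thm.ell.convex}. For the general case $\|f_1\|_1<\|f_2\|_1$, set $g^\ast(s)=f_2^\ast(s)\,\chi_{[0,R)}(s)$ with $R$ chosen so that $\int_0^R f_2^\ast=\|f_1\|_1$. Then $g$ is rearranged, $g\le f_2$ pointwise, $\|g\|_1=\|f_1\|_1$, and $f_1\prec g$ (for $s\le R$ this is $f_1\prec f_2$; for $s>R$ it reads $\int_0^s f_1^\ast\le\|f_1\|_1=\int_0^R f_2^\ast=\int_0^s g^\ast$). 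The equal-mass case gives $v_1\prec v_g$ and $B(v_1)\prec B(v_g)$, while $g\le f_2$ and the comparison principle (Theorem~\ref{th.exist}(ii)) give $v_g\le v_2$ pointwise, hence $v_1\prec v_2$ and $B(v_1)\prec B(v_2)$. Alternatively, note that in the only place Theorem~\ref{thm.ell.convex2} is used (the induction in Theorem~\ref{thm.par.convex}) the two data always have the same mass, so the equal-mass case already suffices for the applications.
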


\noindent {\bf Remark.} These results are in perfect agreement with the results of \cite{Vsym82} for the standard Laplacian case.

\

\section{Counterexample for elliptic concentration comparison with convex powers}\label{sec.ell.count}

If we compare the results of the preceding section for convex $B$ and concave $B$ we realize that the conclusion is weaker in the latter case. This seemed to us a possible defect in the technique since in the case of the standard diffusion $\sigma=2$  the results are identical.But it turned out that in the fractional equation the concave case has an essential difficulty.

Our aim is now to prove that actually \emph{the general concentration comparison does not hold} in the whole $\R^{N}$, for an equation of the form
\begin{equation}\label{eqell.h}
h \, L_{\sigma}(u^m)+u =f, \qquad L_{\sigma}=(-\Delta)^{\sigma/2},\,h>0,
\end{equation}
with $m>1$. The precise result is stated in Theorem \ref{counter.ell}.  The reduction to power-like nonlinearity simplifies the calculations and is the most important case in the applications.  The
solutions satisfy $ u\rightarrow0 \text{ as } |x|\rightarrow\infty.$
Comparing this equation to \eqref{eq.1}, we notice that here we denote by $u=B(v)=v^{1/m}$ the unknown function. Equation \eqref{eqell.h} is posed in $\ren$ with nonnegative and integrable data $f(x)$. We want to describe the asymptotic behaviour of the solution as $|x|\to\infty$, more precisely its rate of
decay. We will focus on the dependence of the behaviour on the constant $h>0$. This constant  is important since it represents the time increment when discretizing
the evolution problem. We stress the dependence by often denoting the solution as $u(x;h)$. Our main result says that roughly speaking
\begin{equation}
u(x;h) \sim \,h\,|x|^{-(N+\sigma)}
\end{equation}
when $|x|$ is large and $h$ small. We assume $m\ge 1$.
Note that the parameter $h$ can be changed, or fixed to 1, by using the scaling
\begin{equation}
\widetilde u(x)= a\,u(bx), \qquad \widetilde f(x)= a\,f(bx)\,.
\end{equation}
If $a^{m-1}b^{\sigma}\widetilde h=h$, then $\widetilde u$ satisfies: $\widetilde h L_{\sigma}\, (\widetilde u^m)+\widetilde u =\widetilde f$. In
other
words, $u(x;\widetilde h)=a u(bx;h)$. This will be of great use in deriving the negative implication for the concentration analysis in
Section~\ref{sec.neg}.

\subsection{Elliptic positivity estimate via subsolutions}

The first step in our asymptotic positivity analysis of solutions of \eqref{eqell.h} is to ensure that solutions with positive data remain
positive in
some
region. We only need a special case that we establish next.

\begin{lemma} \label{lemma.uniflower} Let $u(x;h)$ be the solution of \eqref{eqell.h} with RHS  $f(x)\ge0$ such that $f(x)\ge 1$ for $|x|\le 1$. We
assume that $m>0$. Then, for every $R<1$ there  are constants $A_1, h_1>0$ (depending on $R$) such that
\begin{equation}
u(x;h)\ge A_1 \quad \mbox{ for } \quad |x|\le R, \ 0<h<h_1.
\end{equation}
\end{lemma}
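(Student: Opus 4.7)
The plan is to construct a simple smooth subsolution of \eqref{eqell.h} that equals $A_1$ on $B_R$ and then appeal to comparison for the resolvent equation.

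First, by the monotonicity of the solution map $f\mapsto u$ with respect to the data (a direct consequence of the $L^1$-contraction and sign preservation of Theorem~\ref{th.exist} and its whole-space version in Section~3), it suffices to replace $f$ by the smaller nonnegative datum $\chi_{B_1}$ and to exhibit a subsolution $\phi$ with $\phi=A_1$ on $B_R$ for $h$ small.

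To build $\phi$ I would pick $\psi\in C_c^\infty(B_1)$ with $0\le\psi\le A_1^m$ and $\psi\equiv A_1^m$ on $B_R$, and set $\phi:=\psi^{1/m}$; then $\phi^m=\psi$ is smooth and compactly supported, so $M:=\|(-\Delta)^{\sigma/2}(\phi^m)\|_\infty$ is finite (with $M=M(R)$). Two regions have then to be checked. Inside $B_1$, one has $\phi\le A_1$ and $|(-\Delta)^{\sigma/2}(\phi^m)|\le M$, so the pointwise subsolution inequality $h(-\Delta)^{\sigma/2}(\phi^m)+\phi\le 1$ reduces to $hM+A_1\le 1$. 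Outside $B_1$, $\phi=0$ and, by the singular-integral representation,
\[
(-\Delta)^{\sigma/2}(\phi^m)(x)=-c_{N,\sigma}\int_{B_1}\frac{\phi^m(y)}{|x-y|^{N+\sigma}}\,dy\le 0,
\]
so the inequality holds trivially. The choice $A_1=1/2$ and $h_1=1/(2M)$ makes both constraints compatible for every $h\in(0,h_1)$, and $\phi$ is a bona fide subsolution on $\ren$.

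Finally, I would invoke the comparison principle for \eqref{eqell.h}: since the operator $u\mapsto(-\Delta)^{\sigma/2}(u^m)$ is $T$-accretive on $L^1(\ren)$ (this is essentially the same contraction-with-sign argument that yields uniqueness in Theorem~\ref{th.exist}), one obtains $u(\cdot;h)\ge\phi$ pointwise in $\ren$, and in particular $u(\cdot;h)\ge A_1$ on $B_R$, proving the lemma. The main potential obstacle in this strategy is the rigorous justification of the subsolution comparison in the nonlocal setting, but because $\phi^m\in C_c^\infty(\ren)$ is a fully admissible test function, the standard Kato-type argument applied to $(\phi-u)_+$ proceeds without modification.
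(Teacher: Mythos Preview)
Your argument is correct and follows the same overall strategy as the paper: reduce to $f=\chi_{B_1}$ by monotonicity, build a compactly supported subsolution $\phi$ with $\phi^m$ smooth, check the pointwise inequality $h\,L_\sigma(\phi^m)+\phi\le \chi_{B_1}$ in the two regions (inside $B_1$ by a crude $L^\infty$ bound on $L_\sigma(\phi^m)$, outside $B_1$ by the sign of the singular integral), and conclude by comparison. The cleanest way to justify your last step is to note that $\phi$ is itself \emph{the} solution of \eqref{eqell.h} with right-hand side $\tilde f:=h\,L_\sigma(\psi)+\psi^{1/m}\in L^1(\ren)$, and since $\tilde f\le \chi_{B_1}\le f$ you get $\phi\le u$ directly from the order-preserving property in Theorem~\ref{th.exist}(ii); this avoids any separate sub/supersolution machinery.

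The paper's own proof differs only in the specific choice of subsolution: instead of a generic bump $\psi\in C_c^\infty(B_1)$, it takes the explicit Getoor profile $g(x)=\tfrac12(1-|x|^2)_+^{\sigma/2}$, for which $L_\sigma g$ is known exactly (a positive constant on $B_1$, negative outside), and sets $u_1=g^{1/m}$. This yields the result first for $R=1/2$ and then for general $R<1$ by scaling. Your approach is arguably more elementary, since it does not rely on Getoor's formula and handles all $R<1$ in one shot; the paper's choice, on the other hand, gives an explicit subsolution and foreshadows the more delicate barrier constructions in Theorems~\ref{lower.ell.est} and~\ref{up.ell.est}, where compactly supported profiles with precisely controlled $L_\sigma$ are genuinely needed.
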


 {\sl Proof.}  $\bullet$  First, we construct a subsolution for a related problem that has an explicit form and compact support. Let $g(x)$ be  the
 explicit  function,
$$
g(x)=\frac12 (1-r^2)_+^{\sigma/2}, \quad r=|x|\ge 0\,.
$$
Getoor \cite{Getoor}, Theorem 5.2, proves that $L_{\sigma} g(x)=c_0>0$ on the ball of radius 1 where $g$ is positive, while $L_{\sigma/2}g<0$ for
$r>1$,
with
an
explicit formula that goes to minus infinity as $r\to 1$ and behaves as  $\sim r^{-(N+\sigma)}$ when $r\to\infty$. Next, we consider the following
combination
$$
f_1(x):=h L_{\sigma}g + g^{1/m}.
$$
This can be seen as follows: the solution $u$ of equation \eqref{eqell.h} corresponding to RHS  $f_1$ is  $u_1=g^{1/m}$.

Let us now try to estimate $f_1$: for $r>1$ we have $f_1<0$. For $r\le 1$ we have $f_1= c_o h+ g^{1/m}>0$, besides $f_1\le (1/2)^{1/m}+hc_0<1$ if
$h<
(1-2^{-m})/c_0$.  Under these restrictions on $h$,  $u_1=g^{1/m}$, the solution for RHS $f_1$, serves as a subsolution for the RHS
$f(x)=\chi_{1}(0)$,
the
characteristic function of the ball of radius $1$. This means that the solution $u$ corresponding to such $f$ is equal or larger than $u_1=g^{1/m}$.
Since
$u_1(x)$ is uniformly positive in the ball of radius $1/2$, $u(x)$ is uniformly positive in the ball of radius $1/2$ when  $0<h<h_1$.

$\bullet$ By means of  scalings to put the dimensions in $x$, $u$ and $h$ as in the statement. \qed

We now proceed  with the asymptotic estimate from below.

\begin{theorem} \label{lower.ell.est} Let $u(x;h)$ be the solution of \eqref{eqell.h} with RHS  $f(x)\ge0$ such that $f(x)\ge 1$ in the ball
$B_1(0)$.
We assume that $m\ge 1$. Then there are constants $ C_-,R_1, h_1 >0$  such that
\begin{equation}
u(x;h)\ge C_-\,h\,|x|^{-(N+\sigma)}
\end{equation}
if \ $|x|\ge R_1$ and $0<h<h_1$.
\end{theorem}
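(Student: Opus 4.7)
The strategy is to combine the interior positivity supplied by Lemma~\ref{lemma.uniflower} with the singular-integral representation of the fractional Laplacian, whose kernel is precisely $|x-y|^{-(N+\sigma)}$. Writing~\eqref{eqell.h} pointwise as $u(x;h) = f(x) - h\,L_\sigma(u^m)(x)$ and using $f\ge 0$ reduces the theorem to the single inequality
\[
 -L_\sigma(u^m)(x) \;\ge\; C'\,|x|^{-(N+\sigma)}, \qquad |x|\ge R_1,
\]
with $C'>0$ uniform in $h\in(0,h_1)$; the factor $h$ in the conclusion then appears automatically, giving $C_- = C'$.

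First I would apply Lemma~\ref{lemma.uniflower} with $R=1/2$ to produce constants $A_1, h_1>0$ so that $u(\,\cdot\,;h)\ge A_1$ on $B_{1/2}(0)$ for every $h\in (0,h_1)$; hence
\[
 \int_{B_{1/2}(0)} u^m(y;h)\,dy \;\ge\; A_1^m\,|B_{1/2}| \;=:\; M > 0
\]
uniformly in $h\in (0,h_1)$. Theorem~\ref{th.exist}(ii) simultaneously provides the $h$-uniform $L^\infty$ bound $\|u(\,\cdot\,;h)\|_\infty\le \|f\|_\infty$, and the decay $u(x;h)\to 0$ as $|x|\to \infty$ is built into the definition of weak solution on $\R^N$.

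Next I would use the pointwise formula
\[
 -L_\sigma(u^m)(x) \;=\; c_{N,\sigma}\,\mathrm{P.V.}\!\int_{\R^N} \frac{u^m(y)-u^m(x)}{|x-y|^{N+\sigma}}\,dy
\]
and split the integration domain as $B_{1/2}(0)\cup (\R^N\setminus B_{1/2}(0))$. Choosing $R_1$ so large that $u^m(x)\le A_1^m/2$ whenever $|x|\ge R_1$, and using $|x-y|\le |x|+1/2\le (3/2)|x|$ for $y\in B_{1/2}(0)$, the contribution from $B_{1/2}(0)$ is bounded below by
\[
 c_{N,\sigma}\int_{B_{1/2}(0)}\frac{A_1^m/2}{(3|x|/2)^{N+\sigma}}\,dy \;\ge\; C_1\,|x|^{-(N+\sigma)},
\]
which supplies the main positive term.

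The main obstacle is to show that the remaining principal-value integral over $\R^N\setminus B_{1/2}(0)$ does not destroy this lower bound, since there the numerator $u^m(y)-u^m(x)$ changes sign and the kernel becomes singular at $y=x$. I would split this remainder into a fixed-radius ball $B_\rho(x)$ (disjoint from $B_{1/2}(0)$ for $|x|\ge \rho+1/2$) and its complement. On $B_\rho(x)$, the PV cancellation is controlled by the $C^{1,\alpha}$-regularity of $u^m$ guaranteed by Theorem~\ref{th.exist}(i) together with standard interior regularity for $L_\sigma$, which force a suitable $C^2$-seminorm of $u^m$ at $x$ to tend to zero as $|x|\to\infty$ because $u$ does. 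Outside $B_\rho(x)$ the integrand is absolutely integrable and decomposes as a nonnegative piece $c_{N,\sigma}\int u^m(y)/|x-y|^{N+\sigma}\,dy$ plus a negative piece bounded in absolute value by $C\,u^m(x)\,\rho^{-\sigma}$, which becomes arbitrarily small as $|x|\to\infty$. Enlarging $R_1$ if necessary absorbs every error into $C_1/2\cdot |x|^{-(N+\sigma)}$, and the theorem follows with $C_- = C_1/2$.
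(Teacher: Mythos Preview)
Your approach is quite different from the paper's: you compute $-L_\sigma(u^m)(x)$ directly from the singular-integral representation and try to show it dominates $C'|x|^{-(N+\sigma)}$, whereas the paper constructs an explicit subsolution of the form $U^m=G+h^mF^m$ (with $G$ compactly supported, built from Getoor's function, and $F\sim|x|^{-(N+\sigma)}$) and applies a viscosity comparison in $\{|x|\ge 1/2\}$.

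There is, however, a genuine gap in your argument. You claim that the error terms---the PV integral over $B_\rho(x)$ and the negative piece bounded by $C\,u^m(x)\,\rho^{-\sigma}$---``become arbitrarily small as $|x|\to\infty$'' and can therefore be absorbed into $(C_1/2)\,|x|^{-(N+\sigma)}$ by enlarging $R_1$. But ``arbitrarily small'' is the statement that the errors are $o(1)$, while absorption into a quantity that itself decays like $|x|^{-(N+\sigma)}$ requires them to be $o(|x|^{-(N+\sigma)})$. That would demand quantitative decay of $u^m(x)$ and of the local $C^2$-seminorm of $u^m$ near $x$ at a rate \emph{faster} than $|x|^{-(N+\sigma)}$, and nothing in your argument provides it. After reducing to the radial decreasing case with $\|u\|_1\le\|f\|_1$, the only a~priori decay available is $u(x)\le C|x|^{-N}$, hence $u^m(x)\le C|x|^{-mN}$; when $1\le m<1+\sigma/N$ this is \emph{slower} than $|x|^{-(N+\sigma)}$, so for large $|x|$ the term $C\,u^m(x)\,\rho^{-\sigma}$ actually dominates your main term and no enlargement of $R_1$ can help. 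One could attempt a rescue by first establishing the upper bound of Theorem~\ref{up.ell.est}, but that inverts the logical order of the section and still leaves the corresponding decay of $D^2(u^m)$ to be justified uniformly in $h$.

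The paper's barrier construction avoids this circularity entirely: because $G$ and $F$ are explicit, one computes $L_\sigma G$ and $L_\sigma F^m$ directly (both are $O(|x|^{-(N+\sigma)})$ with the right signs), and the subsolution inequality $U+hL_\sigma U^m\le 0$ for $|x|>1/2$ becomes the elementary condition $C_2+h^mC_3<C_1$ on the constants. The only information about the actual solution $u$ that enters is the interior lower bound from Lemma~\ref{lemma.uniflower}, used solely as a boundary condition at $|x|=1/2$ for the comparison.
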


\noindent {\sl Proof.}  We will use the standard comparison theorem to reduce the case where $f$ is a smooth version of the
characteristic
function of
the ball $B_1(0)$. Then known theory says that $u\le 1$ everywhere and is continuous, radially symmetric and decreasing in $r=|x|$. In fact, a
bootstrap
argument
shows that $u\in C^{\infty}$. Since $\|u\|_1\le \|f\|_1=\omega_N$ we also have a first decay for $u$ near infinity of the form
$$
u(r)\le C\,r^{-N}.
$$
Of course, this first estimate is not sharp, in view of our next results.

\noindent $\bullet$ We want to construct a subsolution of the form
\begin{equation}
U^m(x;h)=G(|x|)+ h^m\,F^m(|x|)\,,
\end{equation}
which will be valid for $0<h<h_1$. Here the functions $G, F\ge 0$ and the constant $h_1>0$ have to chosen carefully, as explained below.

We take $G(r)=0$ for $r=|x|\ge 1/2$ so that $U(x;h)=hF(x)$ there. If $G$ is also smooth we have $L_{\sigma}\,G$ bounded and we can also choose $G$ so
that
$$
L_{\sigma}G\le -C_1r^{-(N+\sigma)} \quad \mbox{for} \quad r>1/2.
$$
We may choose as $G$ a smoothed version of the previous Getoor function, using  convolution.

We also need $F$ positive, smooth  and $F(r)\sim  C_2r^{-(N+\sigma)}$ as $r\to\infty$ to get the desired conclusion after the comparison argument:
$u(x;h)\ge
U(x;h)\ge C\,h\,r^{-(N+\sigma)}$ (if  $r$ is large and  $h\sim 0$, see below)

To check the property of subsolution we proceed as follows. We have
$$
L_{\sigma}U^m=L_{\sigma} G(x)+ h^m L_{\sigma}F^m(x)
$$
As we have pointed out, our choice of $G$ leads to the above  estimate for $L_{\sigma}G$ with negative sign. We also have $F\le C_2r^{-(N+\sigma)}$
for $r>1/2$, by Lemma 2.1 in
\cite{BV2012} we have  that since $F^m=O(r^{-(N+\sigma)m})$ and $(N+\sigma)m>N$, we can choose $F$ so  that  $ |L_{\sigma}F^m|\leq
C_{3}r^{-(N+\sigma)}$
for some
positive
constant $C_3$ and $r>1/2$. Then we will have
$$
U+ h\,L_{\sigma}U^m\le  h\left(F + L_{\sigma}G+h^mL_{\sigma}F^m\right) \le h( C_2r^{-(N+1)} -C_1r^{-(N+1)}+ h^m L_{\sigma}F^m),
$$
which will be negative for all $r>1/2$ if
$$
C_2+h^m C_3 < C_1\,.
$$
In order to make sure that such constants can be obtained, we fix first $G$ and this determines $C_1$. We then use a tentative $F_0$ for the
function
$F$ and
multiply it by a small constant so that $F$ and $L_{1/2}F^m$ are smaller than $C_1/2$. Indeed, we can take $C_{2}<C_{1}/2$ and
$h<(C_{1}/2C_{3})^{1/m}=:h_1$. Finally, we may take $h_1=1$.

\medskip

\noindent $\bullet$ The next step is to use the viscosity method to compare $u$ and $U$ in the  $Q=\{|x|\ge 1/2\}$, and this will prove that $U(x;h)\le u(x;h)$ in $Q$ if $h<h_1$.

The following inequality establishes a suitable comparison of the boundary conditions at $|x|=1/2$:
$$
U(x;h)=hF(1/2)< A_1\le u(x;h).
$$
Here we use Lemma \eqref{lemma.uniflower} with the choice $R=1/2$, which gives $u(x;h)\geq A_{1}$ for some constant $A_{1}$, $|x|\leq1/2$ and
$h$
sufficiently
small. Now
$$
hF(1/2)\leq 2^{N+1}\,h_{1}C_{2}<A_{1}
$$ up to choose $C_{2}$ properly and $h$ under a further bound.
Once this is justified, we argue at the first point where $u$ and $U$ touch. Actually, we must use and approximation $u_\ve$ instead of $u$. \qed

\medskip

\noindent  {\bf Remark.} The only restriction on $m$ is $m(N+\sigma)>N$, which means $m>m_1=N/(N+\sigma)$.

\subsection{Upper bound estimate}

\begin{theorem}\label{up.ell.est} Let $u(x;h)$ be the solution of \eqref{eqell.h} with RHS  $f$ such that $0\le f(x)\le 1$ in the ball
$B_1(0)$
and
$f(x)=0$
for $|x|>1$. We assume that $m\ge 1$. Then there are constants $ C_+,R_2, h_2 >0$ such that
\begin{equation}
u(x;h)\le C_+\,h\,|x|^{-(N+\sigma)}
\end{equation}
if \ $|x|\ge R_2$ and $0<h<h_2$.
\end{theorem}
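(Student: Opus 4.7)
The plan is to mirror the subsolution construction of Theorem \ref{lower.ell.est}, producing instead an explicit \emph{supersolution} $W$ that carries the correct tail $\sim h\,|x|^{-(N+\sigma)}$ and is large enough on a fixed ball to dominate $u$ on the near-field boundary. As a preliminary reduction, by the comparison principle and $L^{1}$-contraction of the solution map $f\mapsto u$ (Theorem \ref{th.exist}), it suffices to treat the case $f=\chi_{B_{1}(0)}$ (or a smooth approximation). The solution is then smooth, radial, monotonically decreasing in $r=|x|$, $0\le u\le 1$, and uniform-in-$h$ interior regularity yields $u(x)\le M$ on a fixed ball $B_{r_{0}}$ independently of $h\in(0,h_{0}]$.

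I look for a smooth radial supersolution of the form
\[
W^{m}(x)=\widetilde G(|x|)+h^{m}F^{m}(|x|),
\]
in direct analogy with the subsolution ansatz $U^{m}=G+h^{m}F^{m}$. Here $\widetilde G$ is smooth, nonnegative, compactly supported in $B_{R_{0}}$ for some $R_{0}>r_{0}$, with $\widetilde G\ge K^{m}$ on $B_{r_{0}}$ for some fixed $K>M$. By compact support and smoothness, $|L_{\sigma}\widetilde G(x)|$ is bounded on $\ren$ and $|L_{\sigma}\widetilde G(x)|\le C_{5}\,|x|^{-(N+\sigma)}$ for $|x|\ge R_{0}+1$. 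The tail function $F$ is smooth, positive, bounded, with $F(r)=C_{+}r^{-(N+\sigma)}$ for $r\ge R_{0}+1$ and $F(r)\ge F_{0}$ on the compact annulus $\{r_{0}\le r\le R_{0}+1\}$; by Lemma 2.1 of \cite{BV2012}, $|L_{\sigma}F^{m}(x)|\le C_{3}\,|x|^{-(N+\sigma)}$ for $|x|>1/2$ (this applies since $F^{m}=O(r^{-(N+\sigma)m})$ with $(N+\sigma)m>N$). The constants $F_{0}$, $K$, $C_{+}$ will be fixed (in this order) large enough.

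To verify $hL_{\sigma}W^{m}+W\ge 0$ on $\{|x|\ge r_{0}\}$, the region splits naturally. In the intermediate annulus $\{r_{0}\le|x|\le R_{0}+1\}$, $W$ is bounded below by a positive constant (from $\widetilde G^{1/m}\ge K$ where $\widetilde G$ is positive, or from $hF_{0}$ near the boundary of $\mathrm{supp}\,\widetilde G$), while both $h|L_{\sigma}\widetilde G|$ and $h^{m+1}|L_{\sigma}F^{m}|$ are $O(h)$ uniformly on this compact annulus; choosing $F_{0}>C_{5}$ and then $h$ small yields the inequality. In the far region $\{|x|\ge R_{0}+1\}$, $\widetilde G=0$ so $W=hF$, and
\[
hL_{\sigma}W^{m}+W \ \ge\ h\,|x|^{-(N+\sigma)}\bigl(C_{+}-C_{5}-h^{m}C_{3}\bigr)\ \ge\ 0,
\]
provided $C_{+}>C_{5}$ and $h$ is small. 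At the boundary $|x|=r_{0}$, $W(x)\ge \widetilde G(x)^{1/m}\ge K>M\ge u(x)$. The viscosity/approximation comparison argument used at the end of Theorem \ref{lower.ell.est} then yields $u\le W$ on $\{|x|\ge r_{0}\}$, and for $|x|\ge R_{2}:=R_{0}+1$ we read off $u(x)\le W(x)=C_{+}h\,|x|^{-(N+\sigma)}$.

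The principal obstacle, absent in the subsolution case, is that $\widetilde G$ must be chosen to dominate the near-field $L^{\infty}$ mass of $u$ (forcing $K>M$), yet the resulting $|L_{\sigma}\widetilde G|$ can be quite large in the transition region near $\partial\mathrm{supp}\,\widetilde G$; this is absorbed by the cap $F_{0}$ on the compact annulus. The flexibility in the ansatz (choosing $F_{0}$ and $C_{+}$ independently once $\widetilde G$ is fixed) is precisely what makes the construction go through, in contrast to the lower-bound proof where the Getoor function supplied the correct sign for free.
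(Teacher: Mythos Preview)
Your construction is correct and follows the same supersolution ansatz $W^{m}=\widetilde G+h^{m}F^{m}$ as the paper, but the paper makes a different choice for the near-field piece. Instead of a generic smooth bump, the paper takes $G$ to be a (scaled) Getoor-type function: $G\ge 0$ compactly supported in a ball of radius $R_{1}>1$, with the special property that $L_{\sigma}G=c_{0}>0$ \emph{on the entire support of $G$}, and $G(1)>1$. With that choice, on the region $\{G>0\}$ one simply drops the nonnegative $U$ term and gets
\[
U+hL_{\sigma}U^{m}\ \ge\ h\bigl(c_{0}+(bh)^{m}L_{\sigma}F^{m}\bigr)\ \ge\ 0
\]
as soon as $(bh)^{m}C_{3}\le c_{0}$; the positive sign of $L_{\sigma}G$ does all the work. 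The paper also carries an extra scaling parameter $b$ in $b^{m}h^{m}F^{m}$ rather than absorbing it into $F$.

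Your route avoids the Getoor function entirely: you let $\widetilde G$ be an arbitrary smooth bump and compensate for the possibly large negative contribution of $L_{\sigma}\widetilde G$ in the transition zone by forcing $F\ge F_{0}>C_{5}$ on the compact annulus, so that $W\ge hF_{0}$ there beats $h|L_{\sigma}\widetilde G|\le hC_{5}$. This is more elementary (no special function with prescribed fractional Laplacian) at the price of a slightly more delicate bookkeeping of constants, since $C_{5}$, $F_{0}$, $C_{+}$, and then the bounds on $L_{\sigma}F^{m}$ must be fixed in order and the final smallness of $h$ absorbs the resulting $h^{m}C_{3}$, $h^{m}C_{3}'$ terms. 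Both arguments close; the paper's is cleaner in the near field, yours requires fewer structural ingredients.
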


\noindent {\sl Proof.}  We will construct a super-solution of the form
\begin{equation}
U^m(x;h)= G(x) + b^mh^m\,F(x)^m\,,
\end{equation}
where $F\ge 0$ is  chosen as before and $b>0$. We will use the fact that $F(r)\sim  C_1r^{-(N+\sigma)}$ as $r\to\infty$. It follows that there is a
large
constant
$k>0$ such
that
$$
kF+L_{\sigma}\,F^m\ge 0 \quad \mbox{everywhere in } \ \ren.
$$
Next we choose $G\ge 0$ compactly supported in a ball of radius $R_1>1$, and such that $L_{\sigma}G=c_0>0$ on the support. As $r\to\infty$,  we get
the
usual
$L_{\sigma
}G\sim -C\,r^{-(N+\sigma)}$. We also need $G(1)> 1$. Note that for $G=0$ we have $U=bhF$.
In any case $U$ is nonnegative, $U\ge 0$. We also have
$$
L_{\sigma}U^m=L_{\sigma}G+ b^mh^m L_{\sigma}F^m(x)
$$
We perform an analysis by regions. Thus, when $G=0$ we  have
 $$
U+ hL_{\sigma}U^m = h(b F + b^mh^mL_{\sigma}F^m + L_{\sigma}G)\ge 0
$$
The final inequality is obtained as follows: we first put $b>b_0$ so that $$(b/2)F  + L_{\sigma}G\ge 0$$ (recall that
$L_{\sigma}G=O(r^{-(N+\sigma)})$). Then
we
put
$b>2k(bh)^m$ to have $$(b/2)F + (bh)^mL_{\sigma}F^m\ge 0,$$ i.e. if $b^{m-1}h^mk<1/2$; this imposes an upper bound  on $h$.

On the other hand, where $G>0$ we have
$$
U+ h L_{\sigma}U^m \ge h(c_0+(bh)^mL_{\sigma}F^m)\ge 0
$$
if $C_3(bh)^m\le c_0$ (we use the fact that $L_{\sigma}F^m$ is bounded). Both conditions are fulfilled if $0<h<h_2$.

\medskip

\noindent $\bullet$ Now the viscosity method works in the region $Q=\{|x|\ge 1\}$, and this will prove that $U(x,h)\ge
u(x,h)$ in
$Q$.
Indeed, the boundary condition at $r=1$ is
$$
U(1)\ge G(1)\ge 1\geq u(x;h).
$$
 by the maximum principle. This ends the proof. \qed

\noindent  {\bf Remark.} The only restriction on $m$ is $m(N+\sigma)>N$, which means $m>m_1=N/(N+\sigma)$.


\subsection{Scaled data. Negative concentration result}\label{sec.neg}

 We have done the argument for a solution with data of height 1 supported in the ball of radius $R=1$. If we want to change
the
radius to
$R\ne 1$ and the height to $A$ we can use the scaling
\begin{equation}\label{scal.frm}
\widetilde u(x;h)= A u(x/R; A^{m-1}R^{-\sigma} h)
\end{equation}
Using this formula and the result of Theorem \ref{lower.ell.est} applied to $u$, we see that the
comparison result holds in an $h$-interval of the form
$$
0<h<h_1(R)=h_1\,R^{\sigma}A^{-(m-1)},
$$
and the new result is
\begin{equation}
\widetilde u(x; h)\ge C_-\frac{hA^{m}R^N}{|x|^{N+\sigma}},
\end{equation}
valid for large $x$ and $h$ suitably small. We are interested in conservation of mass, i.\,e., $A=R^{-N}$. In that case, denoting the new
solution by
$u_R(x;h)$
we have
\begin{equation}
u_R(x; h)\ge \frac{C_-}{R^{N(m-1)}}\frac{h}{|x|^{N+\sigma}}.
\end{equation}

\noindent $\bullet$ In the same way, the scaling formula \eqref{scal.frm} applies in combination with the result of Theorem \ref{up.ell.est} in
the
$h$-interval:
$0<h<h_2(R)=h_2\,RA^{-(m-1)},$ and the new result is
\begin{equation}
u(x; h)\le C_+\frac{hA^mR^N}{|x|^{N+\sigma}}
\end{equation}
Under  conservation of mass, $A=R^{-N}$, denoting the solution by $u_R(x;h)$ we have
\begin{equation}
u_R(x; h)\le \frac{C_+}{R^{N(m-1)}}\frac{h}{|x|^{N+\sigma}}.\label{C_+}
\end{equation}

 We are ready to arrive at a contradiction in the comparison of concentrations.

 \begin{theorem} \label{counter.ell} If $m>1$ there exist two nonnegative, compactly supported, bounded, radially symmetric and rearranged functions,
 $f$ and $f_R$,
 such
 that $f_R
 \prec f$,  and nevertheless the corresponding solutions $u(x;h)$ and $u_R(x;h)$ do not obey the same relation.
 \end{theorem}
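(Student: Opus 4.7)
The plan is to take $f := \chi_{B_1(0)}$ and $f_R := R^{-N}\chi_{B_R(0)}$ for a parameter $R > 1$ to be fixed later. Both are nonnegative, bounded, compactly supported, radially symmetric decreasing with common mass $\omega_N$, and a direct computation shows $\int_{B_\rho} f_R\,dx = \omega_N\min(\rho^N/R^N,\,1) \le \omega_N\min(\rho^N,\,1) = \int_{B_\rho} f\,dx$ for every $\rho > 0$, so $f_R \prec f$. Let $u$ and $u_R$ denote the corresponding solutions of \eqref{eqell.h}; both are nonnegative, bounded, and radially symmetric decreasing since their data are.

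The next step is to establish mass conservation $\int_{\ren} u\,dx = \omega_N = \int_{\ren} u_R\,dx$. I obtain this by testing \eqref{eqell.h} against a smooth cutoff $\varphi_{\widetilde R}(x) := \varphi(x/\widetilde R)$ and moving $L_\sigma$ onto $\varphi_{\widetilde R}$ by self-adjointness: since $u \in L^1 \cap L^\infty$ with $m \ge 1$ gives $u^m \in L^1(\ren)$, and $\|L_\sigma\varphi_{\widetilde R}\|_\infty \le C\widetilde R^{-\sigma}$ by scaling of $L_\sigma$, the remainder $h\int u^m\,L_\sigma\varphi_{\widetilde R}\,dx$ vanishes as $\widetilde R \to \infty$, leaving $\int u\,dx = \int f\,dx$; the identical argument applies to $u_R$.

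The contradiction now follows from the two-sided tail asymptotics that have just been proved. Theorem \ref{lower.ell.est} gives $u(x;h) \ge C_- h|x|^{-(N+\sigma)}$ for $|x| \ge R_1$ and $0 < h < h_1$, while the scaling leading to \eqref{C_+} yields $u_R(x;h) \le C_+ R^{-N(m-1)} h|x|^{-(N+\sigma)}$ for $|x|$ large and $h$ in an interval whose length grows with $R$. Using the spherical computation $\int_{|x|>\rho}|x|^{-(N+\sigma)}\,dx = (N\omega_N/\sigma)\rho^{-\sigma}$, one gets
\[
\int_{\ren \setminus B_\rho}(u - u_R)\,dx \;\ge\; \frac{N\omega_N}{\sigma}\,h\,\rho^{-\sigma}\!\left(C_- - \frac{C_+}{R^{N(m-1)}}\right),
\]
which is strictly positive once $R^{N(m-1)} > C_+/C_-$, possible because $m > 1$. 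If the putative concentration ordering $u_R \prec u$ held, mass conservation would force $\int_{\ren \setminus B_\rho}(u - u_R)\,dx = -\int_{B_\rho}(u - u_R)\,dx \le 0$, contradicting the display.

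The delicate point to watch is that the validity regions in $|x|$ of the two tail bounds, as well as the admissible interval of $h$ for the upper estimate on $u_R$, both depend on $R$ through the scaling of Subsection \ref{sec.neg}. However, once $R$ is fixed large enough to beat the ratio $C_+/C_-$, the $h$-constraint coming from the scaled $u_R$-estimate is strictly weaker than $0 < h < h_1$, so any such $h$ validates both estimates simultaneously on the common tail $\{|x| > \max(R_1, R R_2)\}$, where the contradiction is then extracted. This is the step in which the convexity of the power $u \mapsto u^m$ (equivalently, the concavity of $B$) enters decisively, through the prefactor $R^{-N(m-1)}$ that makes $u_R$'s tail arbitrarily thinner than $u$'s while the masses remain equal.
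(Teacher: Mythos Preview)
Your proof is correct and follows essentially the same route as the paper: take $f=\chi_{B_1}$ and $f_R=R^{-N}\chi_{B_R}$, apply the lower tail bound (Theorem~\ref{lower.ell.est}) to $u$ and the scaled upper bound \eqref{C_+} to $u_R$, and use mass conservation to derive the contradiction with $u_R\prec u$. Your treatment is slightly more explicit than the paper's in two places---you write out the tail integral rather than just observing $u_R(x)<u(x)$ pointwise for large $|x|$, and your mass-conservation argument (using $u^m\le\|u\|_\infty^{m-1}u\in L^1$) is a cleaner variant of the paper's Remark after the theorem---but the logic is identical.
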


 \noindent {\sl Proof.} Let us choose $f=\chi_{1}$, then rescaled function $ f_R (x)=R^{-N}f(x/R)=R^{-N}\chi_{R}$ is compactly supported in the
 ball
 $B_{R}$,
 has
 height $R^{-N}$ and it less concentrated than $f$ if $R>1$. Let us consider the solutions $u$ and $u_R$ that they produce, with the same
 coefficient
 $h$. If we
 apply Theorem \ref{lower.ell.est} and inequality \eqref{C_+} to $u$ and $u_{R}$ respectively, we have that $u_R(x)<u(x)$ if
 $C_+<C_-R^{N(m-1)}$, on the condition that
 $x$ is
 large enough, and $h$ is small enough:
$$
|x|\ge R_-, \quad |x|\ge R_+R; \qquad h<h_1, \quad h< h_2R^{N(m-1)+\sigma}.
$$
Since $u_R$ and $u$ have the same mass (because they are solutions to equation \eqref{eqell.h} with the same $h$ and data having the same mass, see
the remark below), this means that
$u_R$ cannot be less concentrated than $u$ for such small values of $h$.

Once we have the contradiction for the equation with some $h$ we may put $h=1$ by scaling. \qed

\noindent  {\bf Remark.} Here we see that the contradiction is obtained only for $m>1$. For $m\le 1$,  $C_+$ will always be larger than
$C_-R^{N(m-1)}$, and
there is contradiction, just as predicted by the theory, cf. Theorem \ref{thm.ell.convex2}.

\noindent  {\bf Remark.} In order to prove the conservation of the mass for the nonlinearity $A(u)=u^{m}$ with $m>1$, we can argue as in Proposition
\ref{prop.7}.
Indeed, suppose that  $v$ be the weak solution of $ (-\Delta)^{\sigma/2}v+  B(v)=f$ with  $f\in L^1(\ren)$ nonnegative and let $u= B(v)$, with
$B:\R_{+}\rightarrow\R_{+}$ be a concave function, strictly increasing, such that $B(0)=0$. Suppose first that $f\in L^{\infty}(\ren)$ and $|f|\leq
K$.
By Theorem \ref{th.exist}, we have that $u=B(v)\leq K$. By the convexity of $A=B^{-1}$, we have that the function $t\in\R_{+}\rightarrow
A(t)/t\in\R_{+}$ is increasing, then
$$
\frac{v}{u}=\frac{A(B(v))}{B(v)}\leq \frac{A(K)}{K}.
$$
Now if $\zeta$ is the usual cutoff function and $\zeta_R(x)=\zeta(Rx)$ is its rescaled version, we still find equation \eqref{conservation}. We also
have
$$
\left|\int_{\ren} v\,(-\Delta)^{\sigma/2}\zeta_R\,dx\right| \le \frac{A(K)}{K}\int_{\ren } |u(x)(-\Delta)^{\sigma/2}\zeta_R|\,dx \le
\frac{c}{R^{\sigma}} \int_{\ren} |u(x)|\,dx
$$
which in the limit $R\to \infty$ tends to zero. Then by \eqref{conservation} we conclude
$$
\int_{\ren} u(x)\,dx=\int_{\ren} f(x)\,dx.
$$
If $f$ is in $L^{1}(\ren)$ we proceed by approximation.

\noindent  {\bf Last Remark.} We want to point out the comparison performed in this section looks too contrived. Actually, this is partly due to
the fact that it is the translation into the elliptic framework of the more natural parabolic counterexample, constructed in Section
\ref{sec.neg.par}.


\section{Symmetrization for the parabolic problem}\label{sec.par}
\setcounter{equation}{0}

For simplicity of exposition, we start with the case $f=0$.
We briefly remind the concept of \emph{weak solution} to the Cauchy problem associated to a fractional parabolic equation :
\begin{equation} \label{eqcauchy}
\left\{
\begin{array}
[c]{lll}%
u_t+(-\Delta)^{\sigma/2}A(u)=0  &  & x\in\R^{N}\,,t>0%
\\[6pt]
u(x,0)=u_{0}(x) &  & x\in\R^{N}.
\end{array}
\right. %
\end{equation}
Here, $u_{0}$ is an  integrable function on $\R^{N}$, nonnegative in our applications), the nonlinearity $A(u)$ is a nonnegative concave function with $A(0)=0$ and $A'(u)>0$ for all
$u>0$ (extended antisymmetrically in the general two-signed theory). Set $B:=A^{-1}$. As in the elliptic case we rewrite problem \eqref{eqcauchy} as the following quasi-stationary problem
\begin{equation} \label{eqcauchy1}
\left\{
\begin{array}
[c]{lll}%
-\operatorname{div}_{x,y}\left(  y^{1-\sigma}\nabla w\right)  =0  &  & (x,y)\in\R^{N}\times(0,\infty)\,,t>0
\\[6pt]
\dfrac{1}{\kappa_{\sigma}}\displaystyle{\lim_{y\rightarrow0^+}y^{1-\sigma}\,\dfrac{\partial w}{\partial y}}-\dfrac{\partial B(w)}{\partial t}=0 &  &
x\in\R^{N},\,y=0,t>0,\\
[8pt]
 w(x,0,0)=A(u_{0}(x))&  & x\in\R^{N}.
\end{array}
\right. %
\end{equation}
Then we have the following definition

\begin{definition}\label{def.weak.par}
We say that $w$ is a weak  energy solution to problem \eqref{eqcauchy1}
 if $w\in L^{2}_{loc}((0,\infty);X^{\sigma/2}(\mathcal{C}_{\ren}))$, the function $u(x,t):=(B(w(x,0,t))$ is in the space $C([0,\infty);L^{1}(\ren))$
 and the following identity holds
\[
\int_{0}^{\infty}\int_{\R^{N}}u\frac{\partial
\varphi}{\partial
t}\,dx\,dt-\frac{1}{\kappa_{\sigma}}\int_{0}^{\infty}\int_{\mathcal{C}_{\ren}}y^{1-\sigma}\,\nabla_{x,y}\,w\cdot\nabla_{x,y}\,\varphi\,dx\,dy\,dt=0
\]
for all test functions $\varphi\in C_{0}^{1}(\overline{\R_{+}^{N+1}}\times[0,\infty))$. Finally, the initial data are taken in the sense that
\[
\lim_{t\rightarrow0}u(\cdot,t)=u_{0}(x)\quad \in\,L^{1}(\R^{N}).
\]
\end{definition}

If $w$ is a solution to \eqref{eqcauchy1}, we sill say that $u(x,t):=(B(w(x,0,t))$ is a \emph{weak solution} to the Cauchy problem \eqref{eqcauchy}. We refer to \cite{pqrv}, \cite{pqrv2} for questions related to existence and uniqueness of weak solutions to problem \eqref{eqcauchy}.

 The theorems we are going to prove, Theorems \ref{thm.par.convex} and \ref{thm.par.convex.f}, will come from the combination of two ingredients:  the  existence of a \emph{mild solution} to problem \eqref{eqcauchy1}  that  is  reduced to solving some elliptic problems by applying the
Crandall-Liggett theory for $m$-accretive operators, and the comparison theorems \ref{genconvex}, \ref{thm.ell.convex2}, already proved for elliptic problems. Therefore, we will devote a subsection to review this material for the reader's convenience.


\subsection{Abstract evolution equations and accretive operators. The semigroup approach}
\label{Appendix}
\setcounter{equation}{0}

Let $X$ be a Banach space and $\mathcal{A}:D(\mathcal{A})\subset X\rightarrow X$ a nonlinear operator defined on a suitable subset of $X$. Let us consider the problem
\begin{equation}\label{eqcauchyabstract.3}
\left\{
\begin{array}
[c]{lll}%
u^{\prime}(t)+\mathcal{A}(u)=f,  &  & t>0,%
\\[4pt]
u(0)=u_{0}\,, &  &
\end{array}
\right.
\end{equation}
where $u_{0}\in X$ and $f\in L^{1}(I;X)$ for some interval $I$ of the real axis.  For a wide class of operators, in particular the ones considered in this paper, a very efficient way to approach such problem is to use an implicit time discretization scheme that we describe next. Suppose to be specific that
$I=[0,T]$ (but this can be replaced by any interval $[a,b]$ and the procedure is similar). The method
consists in taking first a partition of the interval, say, $t_k=kh$ for $k=0,1,\ldots n$ and $h=T/n$, and then solving the system of difference relations
\begin{equation}
\frac{u_{h,k}-u_{h,k-1}}{h}+\mathcal{A}(u_{h,k})=f_{k}^{(h)}\label{discrellprob}
\end{equation}
for $k=0,1,\ldots n$, where we pose $u_{h,0}=u_{0}$. The data set $\left\{f_{k}^{(h)}:k=1,\ldots,n\right\}$ is supposed to be a suitable discretization
of
the source term $f$, corresponding to the time discretization we choose. This process is called \emph{implicit time discretization scheme} (ITD for
short)
of the equation
$u^{\prime}(t)+\mathcal{A}(u)=f$. It can be rephrased in the form
\[
u_{h,k}=J_{h}(u_{h,k-1}+hf_{k}^{(h)})
\]
where the operator
\[
J_{\lambda}=(I+\lambda \mathcal{A})^{-1},\,\lambda>0
\]
is called the \emph{resolvent operator}, being $I$ the identity operator.  Therefore, the application of the method needs the operator $\mathcal{A}$
to
have a well-defined family of resolvents with good properties.  When the ITD is solved, we construct a \emph{discrete approximate solution}
$\left\{u_{h,k}\right\}_{k}$. By piecing together the values  $u_{h,k}$ we form a piecewise constant function, $u_h(t)$,
typically defined through
\begin{equation}
u_{h}(t)=u_{h,k}\quad\text{if }t\in[(k-1)h,kh]\label{interpol}
\end{equation}
(or some other interpolation rule, like linear interpolation). Then the main question consists in verifying if such function $u_{h}$ converges
somehow
as
$h\rightarrow0$ to a solution $u$ (which we hope to be a classical, strong, weak, or other type of solution) to problem \eqref{eqcauchyabstract.3}.
To
this
regard, we first choose a suitable discretization $\left\{f_{k}^{(h)}\right\}$  in time of
the source term $f$, such that the piecewise constant interpolation of this sequence produces a function $f^{(h)}(t)$ (defined by means of
\eqref{interpol}) verifies the property
\[
\|f^{(h)}-f\|_{L^{1}(0,T;X)}\rightarrow0\quad\text{as }h\rightarrow0.
\]
By means of these discrete approximate solutions we introduce the following notion of \emph{mild solution\,}:

\begin{definition}
We say that $u\in C((0,T);X)$ is a mild solution to \eqref{eqcauchyabstract.3} if it is obtained as uniform limit of the approximate solutions $u_{h}$, as $h\rightarrow0$. The initial data are taken in the sense that $u(t)$ is continuous in $t=0$ and $u(t)\rightarrow u_{0}$ as $t\rightarrow0$.
Besides, we say that $u\in C((0,\infty);X)$ is a mild solution to \eqref{eqcauchyabstract.3} in $[0,\infty)$ if $u$ is a mild solution to the
same problem in any compact subinterval $I\subset [0,\infty)$.
\end{definition}

In order to state a positive existence result, we  need to restrict the class of operators according to the following definitions.

\begin{definition}\label{AcRank}Let $\mathcal{A}:D(\mathcal{A})\subset X\rightarrow X$ be a nonlinear,  possibly unbounded operator. Let
$R_{\lambda}(\mathcal{A})$ be the range of $I+\lambda \mathcal{A}$, a subset of $X$.

\noindent {\rm (i)}  The operator $\mathcal{A}$ is said accretive if for all $\lambda>0$ the map $I+\lambda \mathcal{A}$ is one-to-one onto
$R_{\lambda}(\mathcal{A})\subset X$, and the resolvent operator $J_{\lambda}:R_{\lambda}(\mathcal{A})\rightarrow X$ is a (non-strict) contraction in
the
$X$-norm (i.\,e., a Lipschitz map with Lipschitz norm 1).

 \noindent {\rm  (ii)} We say that $\mathcal{A}$ satisfies the rank condition if \ $R_{\lambda}(\mathcal{A})\supset\overline{D(\mathcal{A})}$ for all
 $\lambda>0$. In particular, the rank condition is satisfied if \ $R_{\lambda}(\mathcal{A})=X$ for all $\lambda>0$; in this case, if $\mathcal{A}$ is accretive, we say that $\mathcal{A}$ is
 $m$-accretive.
\end{definition}

We are now ready to state the desired semigroup generation result, that generalizes the classical result of Hille-Yosida (valid in Hilbert spaces and
for
linear $\mathcal{A}$) and the variant by Lumer and Phillips (valid in Banach spaces, still for linear $\mathcal{A}$), and provides the existence and
uniqueness of mild solutions for problems of the type \eqref{eqcauchyabstract.3} in the case $f\equiv0$:

\begin{theorem}[Crandall-Liggett]\label{CrLigg} Suppose that $\mathcal{A}$ is an accretive operator satisfying the rank condition. Then for all data
$u_{0}\in \overline{D(\mathcal{A})}$ the limit
\begin{equation}
S_{t}(\mathcal{A})u_{0}=\lim_{n\rightarrow\infty}(J_{t/n}(\mathcal{A}))^{n}u_{0}.\label{CrLig}
\end{equation}
exists uniformly with respect to $t$, on compact subset of $[0,\infty)$, and  $u(t)=S_{t}(\mathcal{A})u_{0}\in C([0,\infty): X)$. Moreover, the family of operators $\left\{S_{t}(\mathcal{A})\right\}_{t>0}$ is a strongly continuous semigroup of contractions on $\overline{D(\mathcal{A})}\subset X$.
\end{theorem}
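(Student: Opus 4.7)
The plan is to follow the classical Crandall--Liggett argument, whose main ingredients are: (a) well-posedness of the resolvents, (b) an a priori bound on the discrete increments for data in $D(\mathcal{A})$, (c) a discrete ``stability'' estimate that makes $(J_{t/n})^{n}u_{0}$ a Cauchy sequence in $n$, (d) passage to the closure $\overline{D(\mathcal{A})}$, and (e) verification of the semigroup properties. I develop each step in turn.

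First I would use Definition~\ref{AcRank}: accretivity implies that $J_{\lambda}=(I+\lambda\mathcal{A})^{-1}$ is a well-defined single-valued map from $R_{\lambda}(\mathcal{A})$ into $D(\mathcal{A})$ that is nonexpansive in the $X$-norm, and the rank condition guarantees $\overline{D(\mathcal{A})}\subset R_{\lambda}(\mathcal{A})$ for every $\lambda>0$. In particular, for any $u_{0}\in D(\mathcal{A})$ and any step $h>0$ the discrete iterates $u_{h,k}=J_{h}^{k}u_{0}$ are well defined and stay in $D(\mathcal{A})$. A direct consequence of $J_{h}(v+h\mathcal{A}v)=v$ and the contraction property is the \emph{a priori increment bound}
\[
\|u_{h,k}-u_{h,k-1}\|_{X}\le \|u_{h,1}-u_{0}\|_{X}\le h\,\|\mathcal{A}u_{0}\|_{X},
\]
which in turn yields the Lipschitz-in-time estimate $\|u_{h,k}-u_{h,\ell}\|_{X}\le |k-\ell|\,h\,\|\mathcal{A}u_{0}\|_{X}$.

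The core of the proof is to compare two different discretizations. Given steps $h,k>0$, set $u_{n}=J_{h}^{n}u_{0}$ and $v_{m}=J_{k}^{m}u_{0}$, and define $a_{n,m}=\|u_{n}-v_{m}\|_{X}$. Using the \emph{resolvent identity}
\[
J_{h}w=J_{k}\!\left(\frac{k}{h}w+\Bigl(1-\frac{k}{h}\Bigr)J_{h}w\right),
\]
valid whenever $0<k\le h$, together with the contractivity of $J_{k}$, one obtains the fundamental two-index recursion
\[
a_{n,m}\le \frac{k}{h}\,a_{n-1,m-1}+\Bigl(1-\frac{k}{h}\Bigr)a_{n,m-1},
\]
with boundary data controlled by the previous increment bound, i.e.\ $a_{n,0}\le n h\|\mathcal{A}u_{0}\|$ and $a_{0,m}\le m k\|\mathcal{A}u_{0}\|$. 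A standard combinatorial induction on $n+m$ (reminiscent of Chernoff's lemma / binomial estimates) then yields the sharp inequality
\[
\|J_{h}^{n}u_{0}-J_{k}^{m}u_{0}\|_{X}\le \Bigl(\sqrt{(nh-mk)^{2}+nh(h-k)+mk(k-h)\,}\,\Bigr)\,\|\mathcal{A}u_{0}\|_{X}.
\]
Specializing to $h=t/n$ and $k=t/m$, the right-hand side tends to $0$ as $n,m\to\infty$ uniformly for $t$ in compact subsets of $[0,\infty)$. This is the main technical obstacle, and the one I expect to be the most delicate: the binomial induction must be carried out carefully so that the bound degenerates in the correct way when $nh\approx mk\approx t$.

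Once the Cauchy property is established for $u_{0}\in D(\mathcal{A})$, we define $S_{t}u_{0}:=\lim_{n}J_{t/n}^{n}u_{0}$; the limit is uniform on compact $t$-intervals, so $t\mapsto S_{t}u_{0}$ is continuous. For general $u_{0}\in \overline{D(\mathcal{A})}$, choose $u_{0}^{(j)}\in D(\mathcal{A})$ with $u_{0}^{(j)}\to u_{0}$: the contractivity $\|J_{t/n}^{n}u_{0}-J_{t/n}^{n}u_{0}^{(j)}\|\le \|u_{0}-u_{0}^{(j)}\|$ together with a standard $3\varepsilon$ argument extends the limit to all of $\overline{D(\mathcal{A})}$ and shows that each $S_{t}$ is a nonexpansive map on $\overline{D(\mathcal{A})}$. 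Finally, $S_{0}=I$ is obvious; strong continuity at $t=0$ follows from the uniformity on compact sets; and the semigroup law $S_{t+s}=S_{t}\circ S_{s}$ is proved by approximating both sides with the same family of partitions of $[0,t+s]$ and invoking the Cauchy estimate once more. This yields all the claimed properties of the semigroup $\{S_{t}(\mathcal{A})\}_{t\ge 0}$.
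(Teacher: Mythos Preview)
The paper does not give its own proof of this theorem: it is stated as the classical Crandall--Liggett result and attributed to the literature (the original paper \cite{CL71}, the survey \cite{Cr86}, the monograph \cite{Barbu}, and Chapter~10 of \cite{vazquezPME}). So there is nothing to compare against in the paper itself; the theorem is simply quoted as background for the implicit time discretization method.

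That said, your sketch does follow the standard Crandall--Liggett route (resolvent contractivity, increment bound, two-index recursion via the resolvent identity, density argument), which is exactly what those references do. One slip worth flagging: your ``sharp inequality'' cannot be correct as written. With $h=t/n$ and $k=t/m$ you have $nh=mk=t$ exactly, and then
\[
(nh-mk)^{2}+nh(h-k)+mk(k-h)=0+t(h-k)+t(k-h)=0,
\]
which would force $J_{t/n}^{n}u_{0}=J_{t/m}^{m}u_{0}$ for all $n,m$ --- plainly false. The classical bound carries quadratic step-size terms that do not cancel, e.g.\ a form like
\[
\|J_{h}^{n}u_{0}-J_{k}^{m}u_{0}\|_{X}\le \bigl[(nh-mk)^{2}+nh\cdot h+mk\cdot k\bigr]^{1/2}\,\|\mathcal{A}u_{0}\|_{X},
\]
which at $nh=mk=t$ gives $\sqrt{t(h+k)}\,\|\mathcal{A}u_{0}\|\to 0$. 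The rest of your outline (extension to $\overline{D(\mathcal{A})}$ by a $3\varepsilon$ argument, semigroup law from a common refinement) is fine.
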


Using a popular notation in the linear framework, we could write $S_{t}(\mathcal{A})u_{0}=e^{-t\mathcal{A}}u_{0}$, and because of this analogy formula
\eqref{CrLig} is called the \emph{Crandall-Liggett exponential formula for the nonlinear semigroup generated by} $-\mathcal{A}$. The problem with this very general and useful result is that the $X$-valued function $u(t)=S_t(\mathcal{A})u_0$ solves the equation only in a mild sense, that is not necessarily
a strong solution or a weak solution. Though it is known that strong solutions are automatically mild, the correspondence between mild and weak solutions is not always clear. For the FPME this issue has been discussed in detail in \cite{pqrv, pqrv2}.

In addition, the Crandall-Liggett Theorem result can be extended when we consider nontrivial source term $f$, according to the following result

 \begin{theorem}\label{existmildsol}
 Suppose that $\mathcal{A}$ is $m$-accretive. If  $f\in L^{1}(0,\infty;X)$ and $u_{0}\in\overline{D(\mathcal{A})}$. Then the abstract  problem
 \eqref{eqcauchyabstract.3} has a unique mild solution $u$, obtained as limit of the discrete approximate solution $u_{h}$ by ITD scheme described
 above,
 as $h\rightarrow0$:
 \[
 u(t):=\lim_{h\rightarrow0}u_{h}(t)\,,
 \]
 and the limit is uniform in compact subsets of $[0,\infty)$. Moreover, $u\in C([0,\infty);X)$ and for any couple of solutions $u_{1}$, $u_{2}$
 corresponding to source terms $f_1$, $f_2$ we have
 \[
 \|u_{1}(t)-u_{2}(t)\|_{X}\leq\|u_{1}(s)-u_{2}(s)\|_{X}+\int_{s}^{t}\|f_{1}(\tau)-f_{2}(\tau)\|_{X}d\tau
  \]
 for all $0\leq s<t$.
 \end{theorem}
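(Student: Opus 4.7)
The plan is to combine the implicit time discretization machinery already set up for the source-free case (Theorem \ref{CrLigg}) with a discrete $X$-contraction estimate that accounts for the data terms, and then to pass to the limit. Throughout I would follow the classical B\'enilan--Crandall--Pazy approach for evolution equations governed by $m$-accretive operators, adapted to our present setting.

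First, I would fix a compact interval $[0,T]$ and, for a time step $h=T/n$, construct the ITD scheme
\begin{equation*}
\frac{u_{h,k}-u_{h,k-1}}{h}+\mathcal{A}(u_{h,k})=f_k^{(h)},\qquad u_{h,0}=u_0,
\end{equation*}
where $f_k^{(h)}=\frac{1}{h}\int_{(k-1)h}^{kh}f(\tau)\,d\tau$, which by a standard argument satisfies $f^{(h)}\to f$ in $L^1(0,T;X)$. The $m$-accretivity of $\mathcal{A}$ guarantees that each step has a unique solution $u_{h,k}=J_h(u_{h,k-1}+hf_k^{(h)})\in D(\mathcal{A})$, so the piecewise constant interpolate $u_h(t)$ is well defined on $[0,T]$.

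Next, I would prove the discrete contraction inequality: if $\{u_{h,k}^{1}\}$ and $\{u_{h,k}^{2}\}$ are the discrete solutions corresponding to data $(u_{0}^{1},f^{1})$ and $(u_{0}^{2},f^{2})$, then the non-expansivity of $J_h$ yields
\begin{equation*}
\|u_{h,k}^{1}-u_{h,k}^{2}\|_X\le \|u_{h,k-1}^{1}-u_{h,k-1}^{2}\|_X+h\,\|f_k^{(h),1}-f_k^{(h),2}\|_X,
\end{equation*}
and iterating from $k=k_0$ to $k=k_1$ gives the telescoping bound
\begin{equation*}
\|u_{h,k_1}^{1}-u_{h,k_1}^{2}\|_X\le \|u_{h,k_0}^{1}-u_{h,k_0}^{2}\|_X+\sum_{j=k_0+1}^{k_1}h\,\|f_j^{(h),1}-f_j^{(h),2}\|_X.
\end{equation*}
The sum on the right is exactly the integral of $\|f^{(h),1}-f^{(h),2}\|_X$ on $(k_0 h,k_1 h)$.

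Then I would establish the convergence of $u_h$ to some $u\in C([0,T];X)$ uniformly on $[0,T]$. Taking $u_{0}^{1}=u_{0}^{2}=u_0$ and $f^{1}=f$, $f^{2}=f$ but comparing $u_h$ with $u_{h'}$ for two different step sizes, the standard Kobayashi/Crandall--Liggett estimate (built from the same non-expansive inequality combined with a double-indexed induction) gives
\begin{equation*}
\|u_h(t)-u_{h'}(t)\|_X\le \omega(h,h',t)+\|f^{(h)}-f^{(h')}\|_{L^1(0,T;X)},
\end{equation*}
where $\omega(h,h',t)\to 0$ as $h,h'\to 0$; this shows $\{u_h\}$ is Cauchy in $C([0,T];X)$, and its limit is the mild solution. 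Letting $T$ vary and using uniqueness on subintervals produces a solution in $C([0,\infty);X)$.

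Finally, I would pass to the limit $h\to 0$ in the discrete contraction inequality above, applied to two sequences of data $(u_{0,i},f_i)$ at times $t$ and $s$, using $f_i^{(h)}\to f_i$ in $L^1$, to obtain
\begin{equation*}
\|u_1(t)-u_2(t)\|_X\le \|u_1(s)-u_2(s)\|_X+\int_s^t\|f_1(\tau)-f_2(\tau)\|_X\,d\tau.
\end{equation*}
Uniqueness of the mild solution is then immediate from this estimate with $f_1=f_2$. The main obstacle I anticipate is the Kobayashi-type lemma needed to compare $u_h$ and $u_{h'}$ for \emph{different} step sizes: this requires a rather technical double induction on the two time indices and a careful bookkeeping of the source terms, and it is the one nontrivial ingredient that is not a direct consequence of the one-step non-expansivity of $J_h$.
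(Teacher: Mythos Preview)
Your outline is the standard B\'enilan--Crandall--Pazy argument and is correct in its broad strokes. However, there is nothing to compare it against: the paper does \emph{not} prove Theorem~\ref{existmildsol}. It is stated as a classical result from nonlinear semigroup theory and immediately attributed to the literature (Crandall--Liggett \cite{CL71}, Crandall \cite{Cr86}, Barbu \cite{Barbu}, and Chapter~10 of \cite{vazquezPME}); the authors invoke it as a black box to pass from the elliptic comparison theorems to the parabolic ones via implicit time discretization.

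So your proposal is appropriate as a self-contained sketch of the classical proof, and the Kobayashi-type double-induction you flag as the main technical point is indeed where the work lies; but if your goal is to reproduce what the paper does, the honest answer is that the paper simply cites the result.
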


There is a wide literature on these topics, starting with the seminal paper by Crandall and Liggett \cite{CL71}, see also \cite{Cr86} and the general
reference \cite{Barbu}.  These notes are based on Chapter 10 of the book \cite{vazquezPME}, cf. the references therein.  The last formula we have
mentioned
introduces the correct concept of uniqueness for the constructed class of solutions. Characterizing the  uniqueness of different concepts of solution
is a
difficult topic already discussed by B\'enilan in his thesis \cite{BeTh}.

\subsection{Parabolic Symmetrization}

In order to apply this theory we have to check that the operator associated to our evolution problem $\mathcal{\mathcal{A}}$ is $m$-accretive or that
it
is
accretive and the rank condition holds, in the sense of definition \ref{AcRank}. A main question in this approach to nonlinear evolution is the
corrected
identification of the operator. This has been done in \cite{pqrv}  as follows.

If $u_{0}\in L^{1}(\ren)\cap L^{\infty}(\ren)$, we introduce the nonlinear operator
$\mathcal{A}_0: D(\mathcal{A}_0)\subset L^{1}(\ren)\rightarrow L^{1}(\ren)$, defined by
$$
\mathcal{A}_0(u):=(-\Delta)^{\sigma/2}A(u)\,,
$$
with domain
$$
D(\mathcal{A}_0 ):=\left\{v\in L^1(\ren)\cap L^{\infty}(\ren): \mathcal{A}_0(v)\in L^1(\ren)\cap L^{\infty}(\ren) \right\}.
$$
Returning to the results of Subsection \ref{sect.ell}.1, we see that the
contractive property \eqref{contraction} implies that this operator is accretive in the space $X=L^1(\ren)$. On the other hand, Theorem
\ref{th.exist}
and
its extension on $\ren$ gives the rank condition in $L^1\cap L^\infty$. By closing this operator with respect to the norm $\|.\|_1$ we find an
operator
$\mathcal{A}$ that is $m$-accretive in $L^1(\ren)$.

Therefore, we can use Theorem \ref{existmildsol} that implies that there is a unique
mild solution to \eqref{eqcauchy}, obtained as a limit of discrete approximate solutions by the ITD scheme. In the case $A(u)=u^m$ the extra
regularity of these solutions is discussed in detail in the papers \cite{pqrv, pqrv2}. For general $A$ see \cite{pqrv4}.

We can now use this method to prove a symmetrization result for Fractional Fast Diffusion Equations, including in particular the  well-known linear
fractional heat equation,
$$u_t+(-\Delta)^{\sigma/2}u=0.$$

\begin{theorem}\label{thm.par.convex} Let $u$ be the mild  nonnegative solution of  the FPME \eqref{nolin.parab} with $0<\sigma<2$, posed in $\Omega=\ren$,
with initial data $u_0\in L^1(\ren)\ge 0$ and nonlinearity $A(u)$ given by a concave function with $A(0)=0$ and $A'(u)>0$ for all $u>0$. Let $v$ be
the solution of the corresponding symmetrized problem
\begin{equation} \label{eqcauchysymm}
\left\{
\begin{array}
[c]{lll}%
v_t+(-\Delta)^{\sigma/2}A(v)=0  &  & x\in\R^{N}\,, \ t>0,%
\\[6pt]
v(x,0)=u_{0}^{\#}(x) &  & x\in\R^{N}\,.
\end{array}
\right. %
\end{equation}
Then we have for all $t>0$
\begin{equation}
u^\#(|x|,t)\prec v(|x|,t).\label{concentrationcomp}
\end{equation}
In particular, we have $\|u(\cdot,t)\|_p \le\|v(\cdot,t)\|_p$ for every $t>0$ and every $p\in [1,\infty]$.
\end{theorem}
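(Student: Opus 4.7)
The plan is to combine the implicit time discretization (ITD) scheme of Subsection 5.1 with the two elliptic convex comparison theorems already established. Since $A$ is smooth, strictly increasing and concave with $A(0)=0$, its inverse $B=A^{-1}$ is smooth, strictly increasing and \emph{convex} with $B(0)=0$ and $B'(s)>0$ for all $s>0$, which is exactly the class of nonlinearities covered by Theorems \ref{genconvex} and \ref{thm.ell.convex2}. Apply Theorem \ref{existmildsol} to both Cauchy problems: the mild solutions $u$ and $v$ arise as uniform-in-time limits of piecewise-constant interpolations of discrete iterates $\{u_{h,k}\}$, $\{v_{h,k}\}$ with $u_{h,0}=u_0$, $v_{h,0}=u_0^\#$, where at each step one solves the elliptic problem on $\ren$
\begin{equation*}
 h\,(-\Delta)^{\sigma/2}W_{h,k}+B(W_{h,k})=u_{h,k-1},\qquad u_{h,k}=B(W_{h,k}),
\end{equation*}
and analogously $v_{h,k}=B(\widetilde W_{h,k})$ with right-hand side $v_{h,k-1}$ for the symmetrized problem.

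The heart of the argument is the inductive claim that $u_{h,k}^\#\prec v_{h,k}$ and that $v_{h,k}$ is rearranged, for every $k\ge0$. The base case $k=0$ is trivial. For the inductive step, introduce an auxiliary iterate $\overline W_{h,k}$ solving the same elliptic equation on $\ren$ but with the rearranged data $u_{h,k-1}^\#$, and set $\overline u_{h,k}=B(\overline W_{h,k})$. Theorem \ref{genconvex} applied to $W_{h,k}$ and $\overline W_{h,k}$ (data $u_{h,k-1}$ and its spherical rearrangement) yields $B(W_{h,k})^\#\prec B(\overline W_{h,k})$, namely $u_{h,k}^\#\prec \overline u_{h,k}$, and also tells us that $\overline u_{h,k}$ is rearranged. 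By the inductive hypothesis, $u_{h,k-1}^\#\prec v_{h,k-1}$ with both right-hand sides rearranged; Theorem \ref{thm.ell.convex2} then gives $B(\overline W_{h,k})\prec B(\widetilde W_{h,k})$, i.e.\ $\overline u_{h,k}\prec v_{h,k}$, and that $v_{h,k}$ is rearranged. Transitivity of $\prec$ closes the induction.

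It remains to pass to the limit $h\to 0$. For a fixed $t>0$ and $k=[t/h]$, Crandall-Liggett convergence gives $u_{h,k}\to u(\cdot,t)$ and $v_{h,k}\to v(\cdot,t)$ in $L^1(\ren)$. Since the map $f\mapsto f^\#$ is an $L^1$-contraction, also $u_{h,k}^\#\to u^\#(\cdot,t)$ in $L^1(\ren)$, so the inequalities $\int_{B_R}u_{h,k}^\#\,dx\le\int_{B_R}v_{h,k}\,dx$ survive in the limit for every $R>0$, producing $u^\#(\cdot,t)\prec v(\cdot,t)$. The $L^p$ bound then follows from Lemma \ref{lemma1} and the fact that $v(\cdot,t)$ is rearranged (for $p=\infty$ a separate but elementary argument using monotonicity of $\prec$ in $L^\infty$ applies). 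I expect the main conceptual obstacle to be the bookkeeping that requires \emph{both} elliptic convex results to be chained via the auxiliary iterate $\overline W_{h,k}$: neither theorem alone propagates the comparison one step of the discretization, but together they do, and it is precisely the hypothesis that $B$ be convex (equivalently, that $A$ be concave) that makes both of them available.
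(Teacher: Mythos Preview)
Your proof is correct and follows essentially the same approach as the paper: implicit time discretization, then an induction that chains Theorem~\ref{genconvex} (to pass from $u_{h,k-1}$ to its rearrangement) with Theorem~\ref{thm.ell.convex2} (to pass from $u_{h,k-1}^{\#}$ to $v_{h,k-1}$) via the auxiliary iterate, and finally Crandall--Liggett to take the limit. The only cosmetic difference is that the paper carries the induction on the $A$-level, proving $A(u_{h,k}^{\#})\prec A(v_{h,k})$ and then invoking Lemma~\ref{lemma1} with $\Phi=F\circ B$ to descend to $u_{h,k}^{\#}\prec v_{h,k}$, whereas you track $u_{h,k}=B(W_{h,k})$ directly by using the second conclusion $B(v^{\#})\prec B(V)$ of each elliptic theorem; your bookkeeping is slightly leaner but the content is identical.
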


\noindent{\sl Proof.}
According to what explained before, we use the implicit time discretization scheme. For each time $T>0$, we divide the time interval $[0,T]$ in $n$ subintervals
$(t_{k-1},t_{k}]$, where $t_{k}=kh$ and $h=T/n$. We construct then the function $u_{h}$ which is piecewise constant in each interval
$(t_{k-1},t_{k}]$,
by
\[
u_{h}(x,t)=
\left\{
\begin{array}
[c]{lll}%
u_{h,1}(x)  &  & if\,\,t\in[0,t_{1}]%
\\[6pt]
u_{h,2}(x)  &  & if\,\,t\in(t_{1},t_{2}]
\\
[6pt]
\cdots
\\
[6pt]
u_{h,n}(x)  &  & if\,\,t\in(t_{n-1},t_{n}]
\end{array}
\right. %
\]
where $u_{h,k}$ solves the equation
\begin{equation}
h(-\Delta)^{\sigma/2}A(u_{h,k})+u_{h,k}=u_{h,k-1}\label{eq.18}
\end{equation}
with the initial value $u_{h,0}=u_{0}$.
Similarly, concerning the symmetrized problem \eqref{eqcauchysymm}, we define the piecewise constant function $v_{h}$ by \[
v_{h}(x,t)=
\left\{
\begin{array}
[c]{lll}%
v_{h,1}(x)  &  & if\,t\in[0,t_{1}]%
\\
[6pt]
v_{h,2}(x)  &  & if\,t\in(t_{1},t_{2}]
\\
[6pt]
\cdots
\\
[6pt]
v_{h,n}(x)  &  & if\,t\in(t_{n-1},t_{n}]
\end{array}
\right. %
\]
where $v_{h,k}(x)$ solves the equation
\begin{equation}
h(-\Delta)^{\sigma/2}A(v_{h,k})+v_{h,k}=v_{h,k-1}\label{eq.20}
\end{equation}
with the initial value $v_{h,0}=u_{0}^{\#}$.
Our aim is now to compare the solution $u_{h,k}$ to \eqref{eq.18} with the solution \eqref{eq.20}. We proceed by induction. Using Theorem
\ref{genconvex}, we get
\[
A(u^{\#}_{h,1})\prec A(v_{h,1}).
\]
If we suppose by induction that $u^{\#}_{h,k-1}\prec v_{h,k-1}$ and call $\widetilde{u}_{h,k}$ the (radially decreasing) solution to the
equation
\begin{equation*}
h(-\Delta)^{\sigma/2}A(\widetilde{u}_{h,k})+\widetilde{u}_{h,k}=u^{\#}_{h,k-1},
\end{equation*}
Theorem \ref{genconvex} and Theorem \ref{thm.ell.convex2} imply
\begin{equation}\label{eq.21}
A(u^{\#}_{h,k})\prec A(\widetilde{u}_{h,k})\prec A(v_{h,k})\,,
\end{equation}
hence \eqref{eq.21} holds for all $k=1,\ldots,n$. Therefore, by the definition of $u_{h}$ and $v_{h}$, we find
\begin{equation}
A(u_{h}(\cdot,t)^{\#})\prec A(v_{h}(\cdot,t))\label{eq.22}
\end{equation}
for all times $t$. Using Lemma \ref{lemma1} with the choice $\Phi=F\circ B$, where $F\ge 0$ is convex and $F(0)=0$,  we obtain
\[
\int_{\ren} F(u_{h}^{\#}(x,t))dx\leq\int_{\ren} F(v_{h}(x,t))dx\,,
\]
which in turn yields
\begin{equation}
u_{h}^{\#}(\cdot,t)\prec v_{h}(\cdot, t).\label{conccomph}
\end{equation} Now Crandall-Liggett Theorem \eqref{existmildsol} implies
\[
u_{h}\rightarrow u,\quad v_{h}\rightarrow v\,\, \text{uniformly}.
\]
Then passing to the limit in \eqref{conccomph} we get the result.\nc\qed

\subsection{Symmetrization for the equation with a left-hand side}

We now consider the case $f\in L^1(Q)$, $Q=\ren\times(0,\infty)$ and $f\not\equiv 0$. In that case the
semigroup generation Theorem \ref{existmildsol} can still be applied to obtain the so-called {\sl unique mild solution } of the evolution problem \begin{equation}
\label{eqcauchy.f}
\left\{
\begin{array}
[c]{lll}%
u_t+(-\Delta)^{\sigma/2}A(u)=f  &  & x\in\R^{N}, \ t>0\,,%
\\[6pt]
u(x,0)=u_{0}(x) &  & x\in\R^{N}.
\end{array}
\right. %
\end{equation}
As explained in Subsection \ref{Appendix}, we need to perform a discretization of $f$ adapted to the time mesh $t_k=kh$ that
we have used above, let us call
it
$\{f_k^{(h)}\}$, so that the piecewise constant (or linear in time) interpolation of this  sequence produces a function $f^{(h)}(x,t)$ such that
$\|f-f^{(h)}\|_1\to 0$ as $h\to 0$. Then we use the previous implicit discretization scheme, now in the form
\begin{equation}\label{disc.ev.eqn}
\frac{1}{h}(u_{h,k}-u_{h,k-1})+(-\Delta)^{\sigma/2}A(u_{h,k})=f_k^{(h)}\,,
\end{equation}
to produce the semi-discrete function $\{u_{h}(x,t_k)=u_{h,k}(x): k=0,1,\cdots\}$, which after interpolation in time serves as $h$-approximation
to
the mild
solution $u(x,t)$. According to \eqref{disc.ev.eqn} we have to solve the elliptic problems
\begin{equation}\label{eq.18b}
h(-\Delta)^{\sigma/2}A(u_{h,k})+u_{h,k}=u_{h,k-1}+ h\,f_k^{(h)}\,,
\end{equation}
and we can use the theory developed in Section \ref{sect.ell}. Then we have the following result

\begin{theorem}\label{thm.par.convex.f} Let $u$ be the nonnegative mild solution of  the FPME \eqref{nolin.parab} with $0<\sigma<2$, posed in
$\Omega=\ren$, with
initial data $u_0\in L^1(\ren)$, $u_0\ge 0$, right-hand side $f\in L^1(Q)$, $f\ge 0$, and nonlinearity $A(u)$ given by a concave function with
$A(0)=0$
and
$A'(u)>0$ for all $u>0$. Let $v$ be the solution of the symmetrized problem
\begin{equation} \label{eqcauchysymm.f}
\left\{
\begin{array}
[c]{lll}%
v_t+(-\Delta)^{\sigma/2}A(v)=f^{\#}(|x|,t)  &  & x\in\R^{N}\,, \ t>0,%
\\[6pt]
v(x,0)=u_{0}^{\#}(x) &  & x\in\R^{N},
\end{array}
\right. %
\end{equation}
where $f^{\#}(|x|,t)$ means symmetrization of $f(x,t)$ w.r. to $x$ for a.e. time $t>0$. Then,
for all $t>0$ we have
\begin{equation}
u^\#(|x|,t)\prec v(|x|,t).
\end{equation}
In particular, we have $\|u(\cdot,t)\|_p \le\|v(\cdot,t)\|_p$ for every $t>0$ and every $p\in [1,\infty]$.
\end{theorem}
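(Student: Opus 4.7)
The plan is to repeat the scheme of Theorem \ref{thm.par.convex} verbatim, modifying only the inductive step to accommodate the nonzero source $f$. Fix $T>0$, set $h=T/n$, $t_k=kh$, and approximate $f$ by the time-averages $f_k^{(h)}(x):=\tfrac{1}{h}\int_{t_{k-1}}^{t_k} f(x,s)\,ds$; the piecewise constant interpolant $f^{(h)}$ converges to $f$ in $L^1(Q_T)$. I then define $u_{h,k}$ and $v_{h,k}$ as the elliptic solutions of
\begin{equation*}
h(-\Delta)^{\sigma/2}A(u_{h,k})+u_{h,k}=u_{h,k-1}+hf_k^{(h)},\qquad u_{h,0}=u_0,
\end{equation*}
\begin{equation*}
h(-\Delta)^{\sigma/2}A(v_{h,k})+v_{h,k}=v_{h,k-1}+h(f_k^{(h)})^{\#},\qquad v_{h,0}=u_0^{\#},
\end{equation*}
both supplied by Section \ref{sect.ell}, since $B=A^{-1}$ is convex.

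The heart of the proof is the inductive claim $u_{h,k}^{\#}\prec v_{h,k}$. Assume $u_{h,k-1}^{\#}\prec v_{h,k-1}$. I combine two classical rearrangement facts: the Hardy--Littlewood--P\'olya inequality $(g_1+g_2)^{\#}\prec g_1^{\#}+g_2^{\#}$ for nonnegative measurable functions, and the stability $a_1\prec a_2 \Rightarrow a_1+b\prec a_2+b$ whenever $b$ is rearranged. Together they give
\begin{equation*}
(u_{h,k-1}+hf_k^{(h)})^{\#}\prec u_{h,k-1}^{\#}+h(f_k^{(h)})^{\#}\prec v_{h,k-1}+h(f_k^{(h)})^{\#}.
\end{equation*}
Theorem \ref{genconvex} compares $u_{h,k}^{\#}$ with the elliptic solution associated to the rearranged data $(u_{h,k-1}+hf_k^{(h)})^{\#}$, and Theorem \ref{thm.ell.convex2} then compares that intermediate solution with $v_{h,k}$, whose data is the more concentrated rearranged function $v_{h,k-1}+h(f_k^{(h)})^{\#}$. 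Chaining the two comparisons delivers $u_{h,k}^{\#}\prec v_{h,k}$, as desired.

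Finally, I pass to the limit $h\to 0$. Theorem \ref{existmildsol} (Crandall--Liggett), applied to the $u$- and $v$-problems with source terms $f^{(h)}$ and $(f^{(h)})^{\#}$ respectively, shows that the interpolants $u_h$ and $v_h$ converge, uniformly on compact time intervals, in $L^1(\ren)$ to the mild solutions of \eqref{eqcauchy.f} and \eqref{eqcauchysymm.f}. Concentration comparison is expressed as an inequality between integrals of rearranged functions on balls, and is therefore preserved under $L^1$-convergence; this yields $u^{\#}(\cdot,t)\prec v(\cdot,t)$ for every $t>0$. The $L^p$-bounds are then immediate from Lemma \ref{lemma1}.

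The step I expect to be the most delicate is verifying that the symmetrized discrete scheme actually approximates the mild solution of \eqref{eqcauchysymm.f}, which in turn requires $(f^{(h)})^{\#}\to f^{\#}$ in $L^1(Q_T)$. This follows from the $L^1$-nonexpansivity of the decreasing rearrangement together with slicewise Lebesgue differentiation in time, but it is the only point where the interplay between time-averaging and spatial symmetrization of the source genuinely enters; everything else is a transcription of the corresponding argument in the $f\equiv 0$ case.
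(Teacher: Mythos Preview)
Your proposal is correct and follows exactly the route the paper indicates (``the proof follows the lines of Theorem~\ref{thm.par.convex}, so we leave the details to the reader''): implicit time discretization, the elliptic comparison Theorems~\ref{genconvex} and~\ref{thm.ell.convex2} at each step, and passage to the limit via Crandall--Liggett. The only genuine additions beyond the $f\equiv 0$ case are the subadditivity estimate $(g_1+g_2)^{\#}\prec g_1^{\#}+g_2^{\#}$ in the inductive step and the convergence $(f^{(h)})^{\#}\to f^{\#}$ in $L^1(Q_T)$ via the $L^1$-nonexpansivity of rearrangement, both of which you handle correctly.
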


 The proof follows the lines of Theorem \ref{thm.par.convex}, so we leave the details to the reader.

\medskip

 \noindent {\bf Remark.} As an easy extension,  we can have also a result about comparison of concentrations for the radial solutions of two
 evolution
 problems, if we  assume  that the
 initial data satisfy the condition $u_{0,1}\prec u_{0,2}$ and the right-hand sides satisfy $f_1(\cdot,t)\prec f_{2}(\cdot,t)$ for almost all  $t>0$.
 The  conclusion is that $u_1(\cdot,t)\prec u_2(\cdot,t)$ for all $t>0$. Let us remind the reader that the result holds only if $A$ is linear or
 concave, as  assumed  above.

\section{Negative result about concentration comparison for the Fractional PME}
\label{sec.neg.par}
\setcounter{equation}{0}

As in the elliptic case, it came to us as a surprise that the comparison result could not be proved for general nonlinearities $A$ without the
assumption of concavity. It turns out that for convex powers it does not hold. Here we will state and prove the negative result about concentration
comparison
for solutions of the Fractional Diffusion Equation in the range of exponents $m>1$, usually known as Slow Diffusion. We first argue in a formal way, since we give later the justification of some details.

\noindent $\bullet$ Let us consider the Fractional PME: \ $u_t+(-\Delta)^{\sigma/2}u^m=0$ in
$\ren$
and
consider first the Barenblatt solution that was studied in \cite{vazBaren}. Here we suppose that $$m>(N-\sigma)/N=:m_c.$$ Let us fix the mass to 1 for
simplicity. The Barenblatt solution has the form
$$
U_{1}(x,t)=t^{-\alpha}F_{m,1}(\xi), \qquad \xi=|x|t^{-\beta}
$$
with $\alpha=N\beta$ and $\beta=1/(N(m-1)+\sigma)$ and $F_{m,1}$ is  the Barenblatt profile of mass 1.  It is also known that as $\xi\to\infty$ we
have
\begin{equation}
F_{m,1}(\xi)\sim C\,\xi^{-(N+\sigma)}.\label{asymptobeh}
\end{equation}
This means that for large $x$ and $t\sim 0$ (so that $\xi\sim \infty$) we get
$$
U_{1}(x,t)\sim C\,t^{\lambda}|x|^{-(N+1)}, \qquad \lambda=\beta\sigma.
$$
This approximation holds uniformly for all $|x|\ge C$ large and all $0<t<\tau$ if $\tau$ is small enough, that is the error is higher order
small
$$
U_{1}(x,t)= C\,t^{\lambda}|x|^{-(N+\sigma)}(1+\varepsilon).
$$
Now, let us choose the initial data to associate to the equation. Let
$$
u_0(x)=\phi(|x|)\ge 0\,,
$$
where $\phi$ is a smooth and compactly supported function in the ball of radius one, having mass 1. Suppose also that $\phi$ is rearranged. Let
us call
$u$ the
solution to the equation with such choice of the data.  We have that $(-\Delta)^{\sigma/2}u_0^m$ is a bounded function that decreases at infinity
like
$C_1\,|x|^{-(N+\sigma)}$.
By virtue of the equation we have
$$
 u_t(x,0)=-(-\Delta)^{\sigma/2}u_0^m
$$
so that for small $t$ we have approximately
$$
 u(x,t)\sim - t\,(-\Delta)^{1/2}u_0^m
$$
and this behaves as $t\to\infty$ like $Ct|x|^{-(N+\sigma)}$. A rigorous proof of this behaviour
will be given in the next two subsections.

The conclusion is that the concentration comparison result is not true. The reason is that the exponent
$$
\lambda=
\sigma\beta=\frac{\sigma}{\sigma+N(m-1)}
$$
is less than 1 (precisely for $m>1$). Clearly, $U_0=\delta(x)$ is more concentrated than $u_0$, but $u(x,t)$ is larger than $U(x,t)$ for very large
$x$ if $t$ is quite small. This is incompatible with satisfying the concentration comparison and having the same mass.

In the graphics of Figure 1  we show the relative evolution of two solutions in time. Initially the concentrations are ordered, later they are not.
The parameter $m$ is 2.

\begin{figure}[!h]\label{figure_Rates}
\centering
\ifpdf
    \includegraphics[width=0.8\textwidth, height=0.43\textwidth]{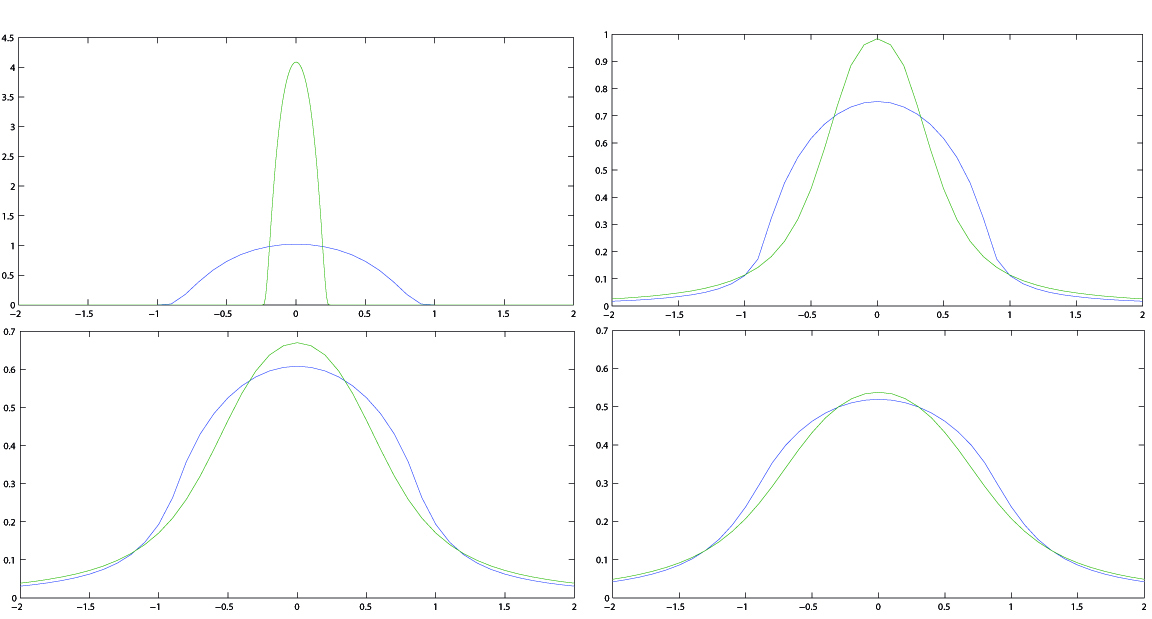}
\else
    \includegraphics[width=0.8\textwidth]{ColasJuntas2.eps}
\fi
\caption{Comparison of FPME evolution at four consecutive times.}
\end{figure}

\subsection{Supersolution. First tail estimate}

Let us now give a rigorous derivation of the tail behaviour. Fist, we have a preparatory step.

\begin{theorem}\label{upper.tail.par} Let $u(x,t)$ be a classical solution of the FPME with initial data $u_0(x)\ge0$ such that $u_0(x)\le 1$ in the
ball $B_1(0)$ and
$u_0(x)\le
\,|x|^{-(N+\sigma)}$ for $|x|>1$. Then there is a time $t_1>0$ such that
\begin{equation}\label{rate1.tail}
u(x,t)\le 2\, |x|^{-(N+\sigma)}
\end{equation}
if $|x|\ge R$ and $R$ is large enough and $0<t<t_1$.
\end{theorem}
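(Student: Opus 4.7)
The plan is to build an explicit classical supersolution of the form $U(x,t)=(1+\lambda t)\Phi(x)$, with $\lambda>0$ to be chosen, where $\Phi$ is a smooth bounded envelope of $u_0$ that carries the prescribed tail $|x|^{-(N+\sigma)}$, and then to invoke the comparison principle for the FPME to deduce the pointwise upper bound on $u$. The construction is the direct parabolic analogue of the supersolution used for the elliptic Theorem \ref{up.ell.est}, with the time factor $(1+\lambda t)$ playing the role of the parameter $h$ there.

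First I would fix a smooth radial $\Phi:\ren\to(0,\infty)$, nonincreasing in $r=|x|$, with $\Phi(x)\equiv 1$ for $|x|\leq 1/2$ and $\Phi(x)\equiv |x|^{-(N+\sigma)}$ for $|x|\geq 2$, smoothly interpolated in the transition annulus so that $\Phi(x)\geq\min\{1,|x|^{-(N+\sigma)}\}$ on $\ren$. The two hypotheses on $u_0$ then guarantee $\Phi\geq u_0$ pointwise. Since $\Phi^m$ is smooth, bounded, and behaves like $|x|^{-m(N+\sigma)}$ at infinity with $m(N+\sigma)>N$, Lemma 2.1 of \cite{BV2012} gives a decay estimate $|(-\Delta)^{\sigma/2}\Phi^m(x)|\leq C_0(1+|x|)^{-(N+\sigma)}$ outside a ball, while on bounded sets $|(-\Delta)^{\sigma/2}\Phi^m|$ is bounded by smoothness; combined with the uniform lower bound $\Phi(x)\geq c_0(1+|x|)^{-(N+\sigma)}$ on $\ren$, I obtain the key pointwise control
\[
|(-\Delta)^{\sigma/2}\Phi^m(x)|\leq C_1\,\Phi(x),\qquad x\in\ren.
\]

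With this in hand, I would compute $U_t+(-\Delta)^{\sigma/2}U^m=\lambda\Phi+(1+\lambda t)^m(-\Delta)^{\sigma/2}\Phi^m\geq[\lambda-(1+\lambda t)^m C_1]\Phi$ and fix $\lambda:=2^m C_1$ together with $t_1:=1/\lambda$, so that for every $0<t<t_1$ the bracket is nonnegative and $U$ is a classical supersolution of $u_t+(-\Delta)^{\sigma/2}u^m=0$ on $\ren\times(0,t_1)$, with $U(\cdot,0)=\Phi\geq u_0$. The comparison principle for classical solutions of the FPME (see \cite{pqrv,pqrv2}) then yields $u(x,t)\leq U(x,t)$ on $\ren\times[0,t_1)$. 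For $|x|\geq R:=2$ and $0<t<t_1$ this reads $U(x,t)=(1+\lambda t)|x|^{-(N+\sigma)}<2|x|^{-(N+\sigma)}$, which is exactly the asserted bound.

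The main obstacle is the uniform pointwise estimate $|(-\Delta)^{\sigma/2}\Phi^m|\leq C_1\Phi$: one has to reconcile the far-field decay bound of \cite{BV2012} with a uniform bound near the transition annulus, and ensure the matching lower bound on $\Phi$ so that the inequality can be absorbed into a multiple of $\Phi$. A secondary technical point is applying the comparison principle when $U$ is only $C^\infty$ after an additional mollification across the gluing between the constant piece and the $|x|^{-(N+\sigma)}$ piece; this can be handled either by smoothing $\Phi$ once more or by passing to the limit in the regularized approximation scheme of \cite{pqrv,pqrv2}.
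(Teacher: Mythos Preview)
Your argument is essentially the paper's own proof: the supersolution $(1+\lambda t)\Phi(x)$ with $\Phi\sim |x|^{-(N+\sigma)}$, the key bound $|(-\Delta)^{\sigma/2}\Phi^m|\le C_1\Phi$ obtained via Lemma~2.1 of \cite{BV2012}, and the choice $\lambda=2^mC_1$, $t_1=1/\lambda$ all coincide with what the paper does (there with the notation $F$, $b$, $k$). The only variant is that the paper performs the comparison by the viscosity method in the exterior region $\{|x|\ge 1\}$, using $F(1)\ge 1\ge u$ as lateral boundary data, whereas you compare globally on $\ren$ after arranging $\Phi\ge u_0$; one small glitch in your construction is that to ensure $\Phi\ge u_0$ on all of $B_1(0)$ (where $u_0$ may equal $1$) you need $\Phi\equiv 1$ at least on $B_1$, not just on $B_{1/2}$, which is a trivial adjustment.
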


\noindent {\sl Proof.} We consider the FPME for $m>1$ and initial data $u_0$  is  1 in the ball of radius 2.  Then for all times the solution
will be bounded by 1.

We want to construct a super-solution of the form
\begin{equation}
U(x,t)= (1+bt)\,F(|x|),
\end{equation}
where $F\ge 0$ has to chosen. I will need $F(r)\sim  C_1r^{-(N+\sigma)}$ as $r=|x|\to\infty$ to get the desired conclusion after the comparison
argument in
the
following way: $$u(x,t)\le U(x,t)\le 2F(r)\le 2C_1\,r^{-(N+\sigma)}$$ if  $r$ is large and  $t\le t_1/b \sim 0$.

To establish such comparison we first note  that \ $U_t=b F(x).$ Using the notation $$L_{\sigma}=(-\Delta)^{\sigma/2}$$, we also have
$$
L_{\sigma}U^m=(1+bt) L_{\sigma}F^m(x)
$$
As we have pointed out, $F(r)\sim  C_1r^{-(N+\sigma)}$ as $r\to\infty$ so that  $ L_{\sigma}F^m=O(r^{-(N+\sigma)})$ for $r>1$ (cf. Lemma 2.1 of
\cite{BV2012}). It follows
that there is a constant $k>0$ such that
$$
kF+L_{\sigma}\,F^m\ge 0, \quad \mbox{everywhere in } \ \ren.
$$
 Therefore, we will have
$$
U_t+ L_{\sigma}U^m =  bF + (1+bt)^mL_{\sigma}F^m\ge 0
$$
if $b>k(1+bt)$, i.e. if $b>k$ and $t<(b-k)/kb$, for instance for $b=2k$ and $t<1/b=1/2k$.

Under such assumptions, the viscosity method will work in the exterior region $Q=\{(x,t): |x|\ge 1, 0<t<t_1\}$, and this will prove that
$U(x,t)\le u(x,t)$ in $Q$ as desired.

We finally check the application of the viscosity method. Indeed, the boundary condition at $r=1$ is
$$
U(1,t)\ge F(1)\ge 1.
$$
so $U(x,t)\ge u(x,t)$ on the lateral boundary of $Q$ located at $r=1$. Same comparison is trivial for $t=0$. We only need to argue by contradiction
at
the
first point where the classical solution $u$ touches $U$ from below to conclude that $u(x,t)$ is strictly less than $U(x,t)$ in $Q$. The
contradiction
at
the point of contact is explained in \cite{BV2012}. The construction of classical solutions is done in \cite{pqrv4}.

\smallskip

\noindent {\bf Remarks.} (i) We have done the argument for $R=1$. If we want to change the radius to $R>1$ we may use the scaling of the equation.

(ii) Lower estimates that match the tail behaviour \eqref{rate1.tail} are derived and used in \cite{StanV2013}.

\subsection{Supersolution. Sharp tail estimate}

\begin{theorem} Let $u(x,t)$ be a classical solution of the FPME with initial data $u_0(x)\ge0$ such that $u_0(x)\le 1$ in the ball $B_1(0)$ and
$u_0(x)=0$
for $|x|>1$. Then there is a time $t_1>0$ and constants $C^*$ and $R$ such that
\begin{equation}\label{rate2.tail}
u(x,t)\le C^* t\, |x|^{-(N+\sigma)}
\end{equation}
if $|x|\ge R$ and $R$ is large enough and $0<t<t_1$.
\end{theorem}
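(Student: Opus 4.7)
The previous theorem provides the time-independent bound $u(x,s)\le 2|x|^{-(N+\sigma)}$ for $|x|\ge R_0$ and $0<s<t_1$, and in addition the maximum principle gives $\|u(\cdot,s)\|_\infty\le \|u_0\|_\infty\le 1$ and $\|u(\cdot,s)\|_1\le \|u_0\|_1\le \omega_N$. We upgrade the stationary bound by an extra factor of $t$, which will come from the fact that $u(\cdot,0)$ vanishes outside $B_1$. A pure supersolution ansatz of the form $U(x,t)=C^* t F(|x|)$ cannot work directly (it is too small on the inner boundary $|x|=R$ to dominate $u$ via comparison); instead, I proceed by a Duhamel-type time integration of the equation, combined with a tail estimate for the fractional Laplacian of $u^m$.

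\textbf{Step 1 (time integration).} Since $u$ is a classical solution (see \cite{pqrv4} for the construction cited in the previous theorem) and $u_0(x)=0$ for $|x|>1$, the equation integrates to give, for every $|x|>1$ and $0<t<t_1$,
\[
u(x,t)\;=\;-\int_0^t (-\Delta)^{\sigma/2} u^m(\cdot,s)(x)\,ds.
\]
Thus the claim reduces to a uniform-in-$s$ pointwise estimate
\begin{equation}\label{plan.target}
\bigl|(-\Delta)^{\sigma/2} u^m(\cdot,s)(x)\bigr|\;\le\;\frac{C}{|x|^{N+\sigma}}\qquad \text{for all } |x|\ge R,\ s\in(0,t_1),
\end{equation}
with $C$ independent of $s$ and $x$; integrating \eqref{plan.target} in $s$ then immediately yields $u(x,t)\le C t|x|^{-(N+\sigma)}$.

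\textbf{Step 2 (tail estimate for $(-\Delta)^{\sigma/2} u^m$).} Fix $|x|\ge R$ with $R \ge 2R_0$ large, and write the fractional Laplacian as the principal-value singular integral, splitting the domain into $A_1=\{|y|\le |x|/2\}$, $A_2=\{|y|>|x|/2,\ |x-y|\ge 1\}$, and $A_3=\{|x-y|<1\}$. On $A_1$ we have $|x-y|\ge |x|/2$, so the integrand is bounded by $C |x|^{-(N+\sigma)}(u^m(y,s)+u^m(x,s))$, and using $\|u^m(\cdot,s)\|_{L^1}\le \|u\|_\infty^{m-1}\|u_0\|_1$ and the previous tail bound for $u^m(x,s)$ we obtain a contribution $O(|x|^{-(N+\sigma)})$. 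On $A_2$, the previous theorem gives $u^m(y,s)\le 2^m |y|^{-m(N+\sigma)}$ with $m(N+\sigma)>N+\sigma$, so Lemma~2.1 of \cite{BV2012} (applied just as in the proof of Theorem~\ref{up.ell.est}) provides a contribution of size $O(|x|^{-(N+\sigma)})$, in fact of faster decay. On the near-diagonal region $A_3$, the principal value is controlled via the local $C^{1,\alpha}$ (or $C^2$) regularity of $u(\cdot,s)$ in the exterior region, which holds uniformly in $s\in(0,t_1)$ by the classical solution theory of \cite{pqrv4}, together with $u^m = O(|y|^{-m(N+\sigma)})$ in that tail. All three contributions have bounds depending only on $N$, $m$, $\sigma$, $\|u_0\|_\infty$ and $\|u_0\|_1$, hence are uniform in $s$. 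This proves \eqref{plan.target}.

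\textbf{Step 3 (conclusion).} Combining Steps 1 and 2, for $|x|\ge R$ and $0<t<t_1$,
\[
u(x,t)\;\le\;\int_0^t \frac{C}{|x|^{N+\sigma}}\,ds\;=\;\frac{Ct}{|x|^{N+\sigma}},
\]
which is the desired estimate with $C^*:=C$.

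\textbf{Main obstacle.} The only real difficulty is Step 2, namely the uniform-in-time estimate \eqref{plan.target}. The regions $A_1$ and $A_2$ are handled by routine manipulations using mass conservation and the stationary tail bound of Theorem~\ref{upper.tail.par}. The delicate point is the near-diagonal region $A_3$: one needs the local regularity of $u(\cdot,s)$ to absorb the singularity, and one needs this regularity to be uniform in $s\in(0,t_1)$. This is where the appeal to the classical solution theory of \cite{pqrv4} (already used in the previous theorem) is essential; for a weak-solution formulation one would instead work with smooth approximations and pass to the limit after establishing \eqref{plan.target} for the approximants.
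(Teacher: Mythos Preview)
Your approach is genuinely different from the paper's, and the overall strategy (time-integrate the equation, then estimate $(-\Delta)^{\sigma/2}u^m(\cdot,s)(x)$ uniformly in $s$) is natural. However, there is a real gap in Step~2, specifically in the treatment of the near-diagonal region $A_3$.

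After the second-difference symmetrization, the $A_3$ contribution is controlled by
\[
\Bigl|\,\mathrm{p.v.}\!\!\int_{|z|<1}\frac{u^m(x,s)-u^m(x+z,s)}{|z|^{N+\sigma}}\,dz\,\Bigr|\;\le\;C\,\|D^2 u^m(\cdot,s)\|_{L^\infty(B_1(x))}
\]
(or the analogous H\"older seminorm when $\sigma<1$). To obtain the target decay $C|x|^{-(N+\sigma)}$ you therefore need a quantitative decay of this \emph{local seminorm}, not merely of the pointwise values $u^m(y,s)\le C|y|^{-m(N+\sigma)}$ furnished by Theorem~\ref{upper.tail.par}. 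The sentence ``together with $u^m=O(|y|^{-m(N+\sigma)})$'' does not bridge this: a function can satisfy $0\le w\le \varepsilon$ on $B_1(x)$ while having $\|D^2 w\|_{L^\infty(B_1(x))}$ of order one, and then the near-diagonal piece is of order one, not $\varepsilon$. Your appeal to the classical solution theory of \cite{pqrv4} gives uniform-in-$s$ \emph{qualitative} smoothness, but interior estimates for a nonlocal equation typically bound local derivatives in terms of the \emph{global} $L^\infty$ norm of $u$ (here $\le 1$), not in terms of the local size of $u$ near $x$; so they do not immediately yield $\|D^2 u^m(\cdot,s)\|_{L^\infty(B_1(x))}\le C|x|^{-(N+\sigma)}$. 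Note also that Lemma~2.1 of \cite{BV2012}, which you invoke for $A_2$, is stated for explicit profiles whose derivatives decay with the natural rates; it does not apply to an arbitrary $C^2$ function with only a pointwise tail bound.

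By contrast, the paper avoids any regularity issue by building an explicit supersolution of the form $U^m(x,t)=G(x)+b^m t^m F(x)^m$, with $G\ge 0$ compactly supported satisfying $L_\sigma G=c_0>0$ on its support, and $F(r)\sim C_1 r^{-(N+\sigma)}$. One checks $U_t+L_\sigma U^m\ge 0$ separately where $G>0$ and where $G=0$, using only the algebraic facts $kF+L_\sigma F^m\ge 0$ and the sign of $L_\sigma G$; then viscosity comparison on $\{|x|\ge 1,\ 0<t<t_1\}$ with the boundary inequality $U(1,t)\ge G(1)\ge 1\ge u$ gives $u\le U$, hence $u(x,t)\le btF(|x|)\le C^*t|x|^{-(N+\sigma)}$ where $G=0$. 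This barrier argument uses nothing about the regularity of $u$ beyond what is needed for the comparison principle, which is why it succeeds without the missing derivative-decay step in your Step~2.
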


\noindent {\sl Proof.} We proof is a delicate variation of the preceding one. We still consider the FPME for $m>1$ and initial data $u_0$  is  1 in
the
ball of radius 2.  For all times the solution
will be bounded by 1. We consider a supersolution of the form
$$
U^m(x,t)= G(x) + b^mt^m\,F(x)^m,
$$
where $F\ge 0$ is  chosen as before. I take $F(r)\sim  C_1r^{-(N+\sigma)}$ as $r\to\infty$. Again, it follows that there is a constant $k>0$ such
that
$$
kF+L_{\sigma}\,F^m\ge 0, \quad \mbox{everywhere in } \ \ren.
$$
Next we choose $G\ge 0$ compactly supported and such that $L_{\sigma}G=c_0>0$ on the support. As $r\to\infty$,  we get the usual $L_{\sigma}G\sim
-cr^{-(d+2s)}$.

We get the formula
$$
U_t=b(G(x)+ b^mt^m\,F^m(x))^{(1/m)-1}t^{m-1}F^m(x)\,,
$$
which reduces to $U_t=bF$ when $G=0$. In any case it is nonnegative, $U_t\ge 0$.
We also have
$$
L_{\sigma}U^m=L_{\sigma}G+ b^mt^m L_{\sigma}F^m(x).
$$
 Then when $G=0$ we will have
 $$
U_t+ L_{\sigma}U^m =  bF + (bt)^mL_{\sigma}F^m + L_{\sigma}G\ge 0
$$
if $b>b_0$, $b>2k(bt)^m$, i.e. if $b^{m-1}t^mk<1/2$, which imposes a condition above on $t$.

On the other hand for $G>0$ we have
$$
U_t+ L_{\sigma}U^m \ge c_0+(bt)^mL_{\sigma}F^m\ge 0
$$
if $C_2(bt)^m\le c_0$. Both conditions are fulfilled if $0<t<t_1$.

Is this is the case the viscosity method will work in the region $Q=\{|x|\ge 1, 0<t<t_1\}$.
and this will prove that $U(x,t)\le u(x,t)$ in $Q$.

Indeed, the boundary condition at $r=1$ is
$$
U(1,t)\ge G(1)\ge 1.
$$

\noindent {\bf Remarks.} The rate of decay \eqref{rate2.tail} of the tail of such solutions at infinity is optimal as a consequence of the
construction of suitable sub-solutions with the same exponents in the $x$ and $t$ dependence, which is done in \cite{StanV2013}.

The counterexample is heavily technical. Surprisingly, the situation becomes much clearer when we let $m\to\infty$. This is studied in
\cite{VazMesa}.


\section{Comments, extensions and open problems}

-In a companion paper \cite{VazVol2} we will use  symmetrization results of this paper to obtain sharp a priori estimates with best constants for
some
functional embeddings involving the solutions of the linear fractional heat equation or its fast diffusion relative.

-As an extension of the above results, we could consider equations that involve a more general version of the fractional Laplacian operator, in the
same way that the standard symmetrization applies to elliptic equations with coefficients.

-The elliptic and parabolic counterexamples have been constructed for the problems posed on the whole space. They could also be constructed for
solutions defined on a bounded domain, say a ball, with zero Dirichlet boundary conditions. The argument is as follows: we consider the problems
posed
in a sequence of balls $B_R$ expanding so that $R\to\infty$ with same data of compact support. According to \cite{pqrv, pqrv2} the solutions $u_R$ converge to the solutions of the limit problem in the whole space. Now, for the limit equation there is a counterexample. We deduce that there is a counterexample before the limit. We leave the details to the reader.

-Another interesting problem would be obtaining a priori estimates for solutions of elliptic and parabolic problems of this type with Neumann boundary conditions using symmetrization techniques. A good indication is that conservation of mass is true for both elliptic and parabolic problems.

Let us now list some open problems that have arisen in the course of the work:

-We do not know how to deal with concave nonlinearities $A$ in bounded domains.

-We do not know how to do the elliptic or parabolic comparison in the case of more general function $A$, if it  is neither concave or convex.

-Finally, we wonder if there is a partial or alternative theory that replaces the failure of the concentration comparison result for the fractional
porous medium equation, i.e. the equation $$\partial_t u +(-\Delta)^{\sigma/2}u^m=0$$
with $m>1$.

\


\noindent {\large\bf Acknowledgments}

\noindent  Both authors  partially supported by the Spanish project MTM2011-24696.  We thank Felix del Teso for the  computations supporting Figure 1.

\

\medskip


{\small


}

\end{document}